\newtheorem{thm}{Theorem}[section]
\newtheorem{cor}[thm]{Corollary}
\newtheorem{lem}[thm]{Lemma}
\newtheorem{prop}[thm]{Proposition}
\newtheorem{defn}[thm]{Definition}
\DeclareMathOperator{\diam}{diam}
\DeclareMathOperator{\diameff}{effdiam}
\newcommand{\spb}[1]{\smallskip}
\newcommand{\mpb}[1]{\medskip}
\newcommand{\bpb}[1]{\bigskip}
\renewcommand{\d}{\delta}
\newcommand{\g}{\gamma}
\begin{document}
\DeclareGraphicsExtensions{.jpg,.pdf,.mps,.png}

\title[Extremal problems on graphs involving the hyperbolicity constant]{Several extremal problems on graphs involving the circumference, girth, and hyperbolicity constant}

\author[Ver\'onica Hern\'andez]{Ver\'onica Hern\'andez$^{(1)}$}
\address{Departamento de Matem\'aticas, Universidad Carlos III de Madrid,
Avenida de la Universidad 30, 28911 Legan\'es, Madrid, Spain}
\email{vehernan@math.uc3m.es}

\author[Domingo Pestana]{Domingo Pestana$^{(1)}$}
\address{Departamento de Matem\'aticas, Universidad Carlos III de Madrid,
Avenida de la Universidad 30, 28911 Legan\'es, Madrid, Spain}
\email{dompes\@@math.uc3m.es}

\author[Jos\'e M. Rodr{\'\i}guez]{Jos\'e M. Rodr{\'\i}guez$^{(1)}$}
\address{Departamento de Matem\'aticas, Universidad Carlos III de Madrid,
Avenida de la Universidad 30, 28911 Legan\'es, Madrid, Spain}
\email{jomaro@math.uc3m.es}

\thanks{$^{(1)}$ Supported in part by three grants from Ministerio de Econom{\'\i}a y Competititvidad, Agencia Estatal de
Investigación (AEI) and Fondo Europeo de Desarrollo Regional (FEDER) (MTM2013-46374-P, MTM2016-78227-C2-1-P and MTM2015-69323-REDT), Spain.}

\date{\today}

\maketitle{}

\begin{abstract}

To compute the hyperbolicity constant is an almost intractable problem, thus it is natural to try to bound it in terms of some parameters of the graph.
Let $\mathcal{G}(g,c,n)$ be the set of graphs $G$ with girth $g(G)=g$, circumference $c(G)=c$, and $n$ vertices; and let $\mathcal{H}(g,c,m)$ be the set of graphs with girth $g$, circumference $c$, and $m$ edges.
In this work, we study the four following extremal problems on graphs:
$A(g,c,n)=\min\{\delta(G)\,|\; G \in \mathcal{G}(g,c,n) \}$, $B(g,c,n)=\max\{\delta(G)\,|\; G \in \mathcal{G}(g,c,n) \}$, $\alpha(g,c,m)=\min\{\delta(G)\,|\; \in \mathcal{H}(g,c,m) \}$ and
$\beta(g,c,m)=\max\{\delta(G)\,|\; G \in \mathcal{H}(g,c,m) \}$.
In particular, we obtain bounds for $A(g,c,n)$ and $\alpha(g,c,m)$, and we compute the precise value of $B(g,c,n)$ and $\beta(g,c,m)$ for all values of $g$, $c$, $n$ and $m$.

\end{abstract}

{\it Keywords:} Extremal problems on graphs, Gromov hyperbolicity,  hyperbolicity constant, girth, circumference, geodesic.

{\it AMS Subject Classification numbers 2010:} 05C75; 05C12; 05A20; 05C80. 

\section{Introduction}

Gromov hyperbolicity was introduced by the Russian mathematician Mikhail Leonidovich Gromov in the setting of geometric group theory \cite{G1}, \cite{GH}, \cite{CDP}, but has played an increasing role in analysis on general metric spaces \cite{BHK}, \cite{BS}, \cite{BB}, with applications to the Martin boundary, invariant metrics in several
complex variables \cite{BBonk} and extendability of Lipschitz mappings \cite{La}.
The concept of hyperbolicity appears also in discrete mathematics, algorithms
and networking. For example, it has been shown empirically
in \cite{ShTa} that the internet topology embeds with better accuracy
into a hyperbolic space than into a Euclidean space
of comparable dimension (formal proofs that the distortion is related to the hyperbolicity can be found in \cite{VeSu});
furthermore, it is evidenced that many real networks are hyperbolic (see, e.g., \cite{CoCoLa,KPKVB,MoSoVi}).
Another important application of these spaces is the study of the spread of viruses through the internet (see \cite{K21,K22}).
Furthermore, hyperbolic spaces are useful in secure transmission of information on the
network (see \cite{K21,K22}).
In \cite{KNS} the authors study hyperbolicity in large scale networks (such as communication, citation, collaboration, peer-to-peer, friendship and other social networks) and propose that hyperbolicity, in conjunction with other local characteristics of networks, such as the degree distribution and clustering coefficients, provide a more complete unifying picture of networks, and helps classify in a parsimonious way what is otherwise a bewildering and complex array of features and characteristics specific to each natural and man-made network.
The hyperbolicity has also been used extensively in the context of random graphs (see, e.g., \cite{Sha1,Sha2,Sha3}).

The study of Gromov hyperbolic graphs is a subject of increasing interest in graph theory; see, e.g.,
\cite{BC,
BKM,
ChFaHuMa,CDEHV,CoCoLa,CD,CoDu,FIV,
K50,K21,K22,
KoMo,KPKVB,MP,
MoSoVi,
Sig,
WaJoBr,WZ}
and the references therein.

Last years several researchers have been interested in showing that metrics used in geometric function theory are Gromov hyperbolic.
In particular, the equivalence of the hyperbolicity of Riemannian manifolds and the hyperbolicity of a very simple graph was proved in \cite{PRT1,PT,T}, hence, it is useful to know hyperbolicity criteria for graphs.

Now, let us introduce the concept of Gromov hyperbolicity and the main results concerning this theory. For detailed expositions about Gromov hyperbolicity, see e.g. \cite{ABCD}, \cite{CDP}, \cite{GH} or \cite{Va}.

If $X$ is a metric space, we say that the curve $\g:[a,b]\longrightarrow X$ is a \emph{geodesic} if we have $L(\g|_{[t,s]})=d(\g(t),\g(s))=|t-s|$ for every $s,t\in [a,b]$
(then $\gamma$ is equipped with an arc-length parametrization).
The metric space $X$ is said to be \emph{geodesic} if for every couple of points in $X$ there exists a geodesic joining them; we denote by $[xy]$
any geodesic joining $x$ and $y$; this notation is ambiguous, since in general we do not have uniqueness of geodesics, but it is very convenient.
Consequently, any geodesic metric space is connected.
If the metric space $X$ is a graph, then the edge joining the vertices $u$ and $v$ will be denoted by $[u,v]$.

In order to consider a graph $G$ as a geodesic metric space, we identify (by an isometry) any edge $[u,v]\in E(G)$, where $E(G)$ denotes the edge set of $G$, with the interval $[0,1]$ in the real line;
then the edge $[u,v]$ (considered as a graph with just one edge)
is isometric to the interval $[0,1]$.
Thus, the points in $G$ are the vertices and, also, the points in the
interior
of any edge of $G$.
In this way, any graph $G$ has a natural distance defined on its points,
induced by taking the shortest paths in $G$,
and we can see $G$ as a metric graph.

If $X$ is a geodesic metric space and $x_1,x_2,x_3\in X$, a {\it geodesic triangle} $T=\{x_1,x_2,x_3\}$ is
the union of the three geodesics $[x_1x_2]$, $[x_2x_3]$ and
$[x_3x_1]$. We say that  $T$ is $\d$-{\it thin} if each of its sides is contained in the $\d$-neighborhood
of the union of the other sides. We denote by $\d(T)$ the sharp thin constant of $T$, i.e.,
$ \d(T)=\inf\{\d\ge 0| \, T \, \text{ is
$\d$-thin}\,\}.$ The space $X$ is $\d$-\emph{hyperbolic} if every geodesic triangle in $X$ is $\d$-thin. We denote by $\d(X)$ the sharp
hyperbolicity constant of $X$, i.e.,
$\d(X):=\sup\{\d(T)| \, T \,
\text{ is a geodesic triangle in }\,X\,\}.$ We say that $X$ is
\emph{hyperbolic} if $X$ is $\d$-hyperbolic for some $\d \ge 0$.

In the classical references on this subject (see, e.g., \cite{ABCD,GH}) appear
several different definitions of Gromov hyperbolicity, which are equivalent in
the sense that if $X$ is $\d$-hyperbolic with respect to one definition, then it is
$\d'$-hyperbolic with respect to another definition (for some $\d'$ related to $\d$).
We have chosen this definition because of its deep geometric meaning \cite{GH}.

The main examples of hyperbolic graphs are trees.
In fact, the hyperbolicity constant of a geodesic metric space can be viewed as a measure of
how ``tree-like'' the space is, since those spaces $X$ with $\delta(X) = 0$ are precisely the metric trees.
This is an interesting subject since, in many applications, one finds that the borderline between tractable and intractable cases may be the tree-like degree of the structure to be dealt with
(see, e.g., \cite{CYY}).
However, the hyperbolicity constant does not relate the graph in question to a specific tree (if connected) or forest (if not connected). In \cite{Sha5}, a measure called forest likelihood is introduced to estimate the likelihood of any given forest via a random dynamical generation process. This measure establishes an interesting connection between static graphs and dynamically growing graphs.

For a finite graph with $n$ vertices it is possible to compute $\d(G)$ in time $O(n^{3.69})$ \cite{FIV} (this is improved in \cite{CoCoLa,CD}).
Given a Cayley graph (of a presentation with solvable word problem) there is an algorithm which allows to decide if it is hyperbolic \cite{Pap}.
A refinement of this approach has been proposed in \cite{ChChPaPe}, that allows to do the same for many graphs: in particular, it provides a simple constant-factor approximation of the hyperbolicity constant of a graph on $n$ vertices in $O(n^2)$
time when the graph is given by its distance-matrix.
However, deciding whether or not a general infinite graph is hyperbolic is usually very difficult.
Therefore, it is interesting to relate hyperbolicity with other properties of graphs.
The papers \cite{BKM,WZ,BCRS,CRS2} prove, respectively, that chordal, $k$-chordal, edge-chordal and join graphs are hyperbolic.
Moreover, in \cite{BCRS} it is shown that hyperbolic graphs are path-chordal graphs.
These results relating chordality and hyperbolicity are improved in \cite{MP}.
Some other authors have obtained results on hyperbolicity for particular classes of graphs: vertex-symmetric graphs,
bipartite and intersection graphs, bridged graphs, expanders and median graphs
\cite{CaFu,CoDu,KoMo,LiTu,Sig}.

We consider simple (without loops or multiple edges) and connected  graphs such that every edge has length 1. Note that to exclude multiple edges and loops is not an important loss of generality, since \cite[Theorems 8 and 10]{BRSV2} reduce the problem of computing the hyperbolicity constant of graphs with multiple edges and/or loops to the study of simple graphs.
The vertex set of a graph $G$ is denoted by $V(G)$, and the \emph{order} $n$ of a graph is the number of its vertices ($n=|V(G)|$). The \emph{size} $m$ of a graph is the number of its edges ($m=|E(G)|$).

Throughout this work, by \emph{cycle} in a graph  we mean a simple closed curve, i.e., a path with different vertices, except for the last one, which is equal to the first vertex.

The \emph{circumference} of a graph (denoted by $c(G)$) is the length of any longest cycle in a graph, whereas  the  \emph{girth} of a graph (denoted by $g(G)$) is the length of any shortest cycle contained in the graph.

Along this paper $g$, $c$,  $n$ and $m$ are positive integers such that $3\leq g\leq c\leq  n   \leq  m$. Hence, we do not consider trees (note that $\delta(G)=0$ for every tree $G$).

Let $\mathcal{G}(g,c,n)$ be the set of graphs $G$ with   girth $g(G)=g$, circumference $c(G)=c$, and $n$  vertices;   and let $\mathcal{H}(g,c,m)$ be the set of graphs with girth $g$, circumference $c$, and $m$  edges.
Let us define
$$A(g,c,n):=\min\{\delta(G)\mid G \in \mathcal{G}(g,c,n) \},$$
$$B(g,c,n):=\max\{\delta(G)\mid G \in \mathcal{G}(g,c,n) \},$$
$$\alpha(g,c,m):=\min\{\delta(G)\mid G \in \\ \mathcal{H}(g,c,m) \},$$
$$\beta(g,c,m):=\max\{\delta(G)\mid G \in \mathcal{H}(g,c,m) \}.$$

Our aim in this paper is to estimate  $A(g,c,n)$, $B(g,c,n)$, $\alpha(g,c,m)$ and $\beta(g,c,m)$, i.e., to study the extremal problems of maximazing and minimazing $\delta(G)$ on the sets $\mathcal{G}(g,c,n)$  and $\mathcal{H}(g,c,m)$.\smallskip

The structure of this paper is as follows. In Section 2 we present key definitions, as well as previous results used in the paper. Sections 3 and 5 contain good bounds for $A(g,c,n)$ and $\alpha(g,c,m)$.
In Sections 4 and 6, Theorems \ref{12345} and \ref{12345e} give the precise value of $B(g,c,n)$ and $\beta(g,c,m)$ in any case, respectively.

\section{Previous results }

In order to estimate $A(g,c,n)$, $B(g,c,n)$, $\alpha(g,c,m)$ and $\beta(g,c,m)$, we need some previous results.

The following theorem gives lower and upper bounds for the hyperolicity constant of any graph in terms of its circumference and girth.  It is a direct consequence of
\cite [Theorem 17]{MRSV} and \cite [Lemma 2.11]{CRSV}.


\begin{thm} \label{t:1CRSV}
 For every graph $G$ with $g(G)=g$ and $c(G)=c$
  $$\dfrac{g}{4}\leq \delta(G)\leq \dfrac{c}{4},$$
 and both inequalities are sharp.
 \end{thm}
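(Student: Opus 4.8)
The plan is to prove the two inequalities separately and then exhibit extremal graphs, reducing each bound to a computation on a single cycle, for which the hyperbolicity constant is exactly one quarter of its length. So I would first record the auxiliary fact $\delta(C_L)=L/4$ for the cycle graph $C_L$ of length $L$. One sees $\delta(C_L)\ge L/4$ by choosing two antipodal points $x,y$ (at distance $L/2$): the two arcs joining them are geodesics, so together they form a degenerate geodesic triangle, and the midpoint of one arc lies at distance exactly $L/4$ from the other arc. The reverse inequality $\delta(C_L)\le L/4$ is a special case of the upper bound argument below applied to $C_L$ itself.

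For the lower bound $\delta(G)\ge g/4$, I would let $C$ be a shortest cycle, so that $C$ has length $g$, and show that $C$ is an \emph{isometric} subgraph, i.e. $d_G(u,v)=d_C(u,v)$ for all $u,v\in C$. Indeed, if some geodesic of $G$ joined two points of $C$ by a path strictly shorter than the shorter arc between them, concatenating this path with the longer arc would produce a closed walk of length strictly less than $g$ that contains a genuine cycle, contradicting $g(G)=g$. Since $C$ is isometric, every geodesic of $C$ is a geodesic of $G$, hence each geodesic triangle of $C$ is a geodesic triangle of $G$; therefore $\delta(G)\ge\delta(C)=g/4$.

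For the upper bound $\delta(G)\le c/4$, I would invoke the reduction furnished by \cite[Lemma 2.11]{CRSV}, by which the supremum defining $\delta(G)$ is (up to approximation) attained on geodesic triangles $T=\{x_1,x_2,x_3\}$ whose three sides meet only at the vertices $x_i$, so that $T$ is a simple closed curve, i.e. a cycle $C_T$ of $G$; by the definition of circumference, $L(C_T)\le c$. For such a triangle, any point $p$ on a side $\gamma_1$ can only reach the union $\gamma_2\cup\gamma_3$ of the other two sides through the endpoints $x_1,x_2$ of $\gamma_1$, so $d_G\big(p,\gamma_2\cup\gamma_3\big)\le \min\{d(p,x_1),d(p,x_2)\}\le \tfrac12 L(\gamma_1)\le \tfrac14 L(C_T)\le c/4$, where the last inequalities use that the geodesic $\gamma_1$ has length at most $L(C_T)/2$. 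Hence $\delta(T)\le c/4$, and taking the supremum gives $\delta(G)\le c/4$ (cf. \cite[Theorem 17]{MRSV}).

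Finally, sharpness of both inequalities is witnessed simultaneously by the cycle graphs: for $C_L$ one has $g(C_L)=c(C_L)=L$ and, by the auxiliary computation, $\delta(C_L)=L/4=g/4=c/4$. The main obstacle I anticipate lies in the upper bound, specifically in making rigorous the reduction to triangles that are honest cycles: geodesic triangles in a graph need not be simple closed curves (their sides may overlap or backtrack), so the delicate point—which is precisely what \cite[Lemma 2.11]{CRSV} is designed to supply—is to show that the extremal thin constant is always realized, or approximated, by a configuration supported on a single cycle whose length is controlled by $c$.
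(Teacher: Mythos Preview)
Your proposal is correct and matches the paper's approach: the paper simply records the theorem as a direct consequence of \cite[Theorem~17]{MRSV} and \cite[Lemma~2.11]{CRSV}, and your sketch unpacks precisely those two ingredients (isometric embedding of a shortest cycle for the lower bound, reduction to triangles that are cycles plus the $L(T)/4$ estimate for the upper bound, and the cycle $C_L$ for sharpness). One wording fix: in the upper bound you say $p$ ``can only reach $\gamma_2\cup\gamma_3$ through the endpoints $x_1,x_2$'', which is not literally true in $G$, but the inequality $d_G(p,\gamma_2\cup\gamma_3)\le\min\{d(p,x_1),d(p,x_2)\}$ holds anyway for the trivial reason that $x_1,x_2\in\gamma_2\cup\gamma_3$, so the argument goes through unchanged.
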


\begin{cor}
\label{newnew}
We always have
$$\dfrac{g}{4} \leq A(g,c,n)\leq B(g,c,n)\leq \dfrac{c}{4},$$
$$\dfrac{g}{4} \leq \alpha(g,c,m)\leq \beta(g,c,m)\leq \dfrac{c}{4}.$$
\end{cor}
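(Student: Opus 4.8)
The corollary follows immediately from Theorem \ref{t:1CRSV}, so the plan is to observe how the extremal quantities inherit the pointwise bounds $g/4 \le \delta(G) \le c/4$.

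The strategy is as follows. Fix $g$, $c$, $n$ (respectively $g$, $c$, $m$) and let $G$ range over $\mathcal{G}(g,c,n)$ (respectively $\mathcal{H}(g,c,m)$). Every such $G$ has $g(G)=g$ and $c(G)=c$ by definition, so Theorem \ref{t:1CRSV} gives $g/4 \le \delta(G) \le c/4$ for each individual $G$ in the family. The crucial point is that these two bounds do not depend on the particular graph $G$ but only on the fixed parameters $g$ and $c$.

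First I would take the infimum over $G \in \mathcal{G}(g,c,n)$ on both sides. Since $g/4 \le \delta(G)$ holds for every admissible $G$, the constant $g/4$ is a lower bound for the set $\{\delta(G) \mid G \in \mathcal{G}(g,c,n)\}$, whence $g/4 \le A(g,c,n)$. Dually, since $\delta(G) \le c/4$ for every admissible $G$, taking the supremum yields $B(g,c,n) \le c/4$. The middle inequality $A(g,c,n) \le B(g,c,n)$ is automatic because the minimum of a nonempty set never exceeds its maximum. The identical argument applied to $\mathcal{H}(g,c,m)$ in place of $\mathcal{G}(g,c,n)$ produces the second chain $g/4 \le \alpha(g,c,m) \le \beta(g,c,m) \le c/4$.

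There is no real obstacle here; the only point meriting a word of care is the nonemptiness of the families $\mathcal{G}(g,c,n)$ and $\mathcal{H}(g,c,m)$, so that the minima and maxima (hence the inequality $A \le B$ and $\alpha \le \beta$) are well defined. Under the standing assumption $3 \le g \le c \le n \le m$ from the introduction, these sets are indeed nonempty, and the extremal quantities are attained since each family is finite. With that understood, the corollary is a direct transfer of the pointwise estimates of Theorem \ref{t:1CRSV} to the extremal values.
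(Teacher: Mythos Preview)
Your approach is correct and matches the paper's: Corollary~\ref{newnew} is stated without proof because it is an immediate consequence of Theorem~\ref{t:1CRSV}, exactly via the min/max argument you describe.

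One small correction, though: your parenthetical claim that the standing hypothesis $3\le g\le c\le n\le m$ guarantees nonemptiness of $\mathcal{G}(g,c,n)$ and $\mathcal{H}(g,c,m)$ is false. For instance, $g=5$, $c=6$, $n=6$ satisfies $3\le g\le c\le n$, yet $\mathcal{G}(5,6,6)=\emptyset$ (any chord of a $6$-cycle creates a cycle of length $3$ or $4$, so girth $5$ is impossible). The paper addresses this later by introducing \emph{v-admissible} and \emph{e-admissible} triplets (Lemmas~\ref{lem2} and~\ref{lem2e}), and Corollary~\ref{newnew} is tacitly applied only to such triplets. So the right way to handle the nonemptiness issue is simply to say the inequalities hold whenever the corresponding family is nonempty, not to assert that the standing numerical constraints already ensure this.
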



Given a graph $G$ and $ [v,w  ]\in E(G)$, we say that $p$ is the \emph{midpoint} of $ [v,w  ]$ if $d_{G}(p,v)=d_{G}(p,w)=1/2$; let us denote by $J(G)$ the union of the set $V(G)$  and the midpoints of the edges of $G$. Consider the set $\mathbb{T}_1$ of geodesic triangles $T$ in $G$ that are cycles and such that the three vertices of the triangle $T$ belong to $J(G)$.

The following result states that in the hyperbolic graphs there always exists a geodesic triangle $T$ for which the hyperbolicity constant is attained and, furthermore, $T\in \mathbb{T}_1$. It appears in  \cite[Theorem 2.7]{BRS}.

\begin{thm}
\label{t:003} For any hyperbolic graph $G$
there exists a geodesic triangle $T\in \mathbb{T}_1$ such that $\delta(T)=\delta(G)$.
\end{thm}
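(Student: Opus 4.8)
The plan is to establish the two assertions hidden in the statement separately: first that the supremum $\delta(G)=\sup\{\delta(T)\mid T\text{ a geodesic triangle}\}$ can be computed over the restricted class $\mathbb{T}_1$, and second that this supremum is actually a maximum. The engine behind both reductions is the piecewise linear nature of distance functions in a metric graph whose edges have length $1$, together with the discreteness of the set $J(G)$. Throughout we use that $G$ is hyperbolic, so that $\delta(G)<\infty$, which is what makes the final limiting argument possible.

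First I would show that an extremal triangle can be normalized to have its vertices in $J(G)$. Fix a geodesic triangle $T=\{x,y,z\}$ and, on one side $\gamma=[xy]$, consider the function $F(p)=d_G\big(p,[yz]\cup[zx]\big)$. This $F$ is $1$-Lipschitz and, since the distance to a fixed finite union of geodesics is piecewise linear with slopes in $\{-1,0,1\}$ along $\gamma$, the quantity $\sup_{p\in\gamma}F(p)$ governing $\delta(T)$ is controlled by the local maxima of $F$. The key normalization lemma is that such an extremal point, and then the endpoints $x,y,z$ themselves, can be relocated to points of $J(G)$ (vertices or midpoints) without decreasing the value $d_G(p,[yz]\cup[zx])$, while keeping all three sides geodesic. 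Note also that because $x$ lies on $[zx]$ and $y$ on $[yz]$, one has $F(p)\le\min\{d_G(p,x),d_G(p,y)\}$, which keeps the relevant configuration controlled during the relocation. This step is exactly what forces the three vertices into $J(G)$.

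Next I would arrange that the triangle is a genuine cycle, and then conclude attainment. If the three geodesics forming $T$ do not constitute a simple closed curve, then two of them share a nontrivial subarc; excising that common subarc yields either a degenerate figure with $\delta=0$ or a strictly smaller cycle triangle $T'$ with $\delta(T')\ge\delta(T)$, so restricting to cycles leaves the supremum unchanged. Combining this with the previous paragraph gives $\delta(G)=\sup\{\delta(T)\mid T\in\mathbb{T}_1\}$. For attainment, when $G$ is finite the set $J(G)$ is finite, hence there are only finitely many triples of potential vertices and, between any two of them, only finitely many geodesics (finitely many shortest simple paths); thus $\delta(T)$ ranges over a finite set of values as $T$ runs through $\mathbb{T}_1$, and the supremum is realized. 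In the general case one takes $T_n\in\mathbb{T}_1$ with $\delta(T_n)\to\delta(G)$ and extracts a limiting triangle $T\in\mathbb{T}_1$ with $\delta(T)=\delta(G)$, using the discreteness of $J(G)$ and the finiteness of $\delta(G)$.

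The hard part will be the normalization lemma of the second paragraph. The delicate points are that a local maximum of $F$ may occur at a cut-locus point of $[yz]\cup[zx]$ that is not a priori in $J(G)$, and that geodesics in $G$ need not be unique, so relocating a vertex to a nearby point of $J(G)$ must be done so as not to create a shortcut that destroys the geodesic property of a side or shrinks $d_G(p,[yz]\cup[zx])$. Managing this bookkeeping — showing that one may always snap the worst point and the vertices to $J(G)$ simultaneously and monotonically — is the technical core on which the whole argument rests.
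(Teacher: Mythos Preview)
The paper does not prove this statement itself; it is quoted from \cite[Theorem~2.7]{BRS} and used as a black box. So there is no in-paper argument to compare against, only the cited reference.

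Your overall strategy---piecewise linearity of the distance function along a side, snapping vertices to $J(G)$, and reducing to cycles---is in the right spirit and is indeed close to what is done in \cite{BRS}. The part you flag as ``the hard part'' (the normalization lemma) really is the technical core there.

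There is, however, a genuine gap in your attainment step for infinite graphs. You propose to take $T_n\in\mathbb{T}_1$ with $\delta(T_n)\to\delta(G)$ and ``extract a limiting triangle $T\in\mathbb{T}_1$,'' invoking the discreteness of $J(G)$ and the finiteness of $\delta(G)$. But discreteness of $J(G)$ is not compactness: in an infinite graph the $T_n$ can wander off to infinity and no subsequence need converge to anything. Finiteness of $\delta(G)$ bounds the \emph{values} $\delta(T_n)$, not the \emph{locations} of the triangles, so it does not rescue the argument.

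The route that actually works avoids limits altogether. Once the supremum is reduced to $\mathbb{T}_1$, one shows that for every $T\in\mathbb{T}_1$ the quantity $\delta(T)$ lies in $\tfrac14\mathbb{Z}$; this is exactly the companion result Theorem~\ref{omarpp} (also from \cite{BRS}), and its proof uses the same piecewise-linear analysis you describe, pushed one step further so that the extremal point $p$ and its nearest point on the opposite sides are themselves pinned to $J(G)$. A bounded subset of $\tfrac14\mathbb{Z}$ attains its supremum for free, with no compactness needed. So the missing idea is discreteness of the set of \emph{values} $\{\delta(T):T\in\mathbb{T}_1\}$, not any limiting or compactness argument on the space of triangles.
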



Now we define a family of graphs which will be useful.

\begin{defn} \label{defn1}
Consider  non-negative integers $k$, $\beta_{j}$,  $\beta'_{j}$ ($0 \leq j\leq k$), and $\alpha_{j}$  ($0 \leq j\leq k+1$), with $\beta'_{0}= \beta'_{k}=\alpha_{0}=\alpha_{k+1}=0$, and such that
\begin{equation} \label{eq:0.1}
\alpha_{j}<\beta_{j}+\alpha_{j+1}+\beta'_{j} ,
\end{equation}
\begin{equation} \label{eq:0.2}
\alpha_{j}<\beta_{j-1}+\alpha_{j-1}+\beta'_{j-1},
\end{equation}
for $1 \leq j \leq k$.

Let $B_{j}$ (respectively, $B'_{j}$) be a path graph with endpoints $u_{j}$  and $v_{j}$ (respectively, $u'_{j}$  and $v'_{j}$)  and length $\beta_{j}$ (respectively, $\beta'_{j}$) for  $0\leq j \leq k$ (respectively, $1\leq j \leq k-1$).  Let $A_{j}$ be a path graph  with endpoints $a_{j}$  and $a'_{j}$  and length $\alpha_{j}$  for $1\leq j \leq k$.

If $A=(\alpha_{1},\dots,\alpha_{k} )$,  $B=(\beta_{0},\dots,\beta_{k} )$, $B'=(\beta'_{1},\dots,\beta'_{k-1} )$, then   we define $G_{A,B,B'}$ as the graph obtained from $A_{1},  \dots ,A_{k}$,  $B_{0},  \dots ,B_{k}$, $B'_{1},  \dots ,B'_{k-1}$ by identifying the vertices  $v_{j-1}$, $u_{j}$, and $a_{j}$ in a single vertex $p_{j}$ for each $1\leq j \leq k$, the vertices  $v'_{j-1}$, $u'_{j}$, and $a'_{j}$ in a single vertex $p'_{j}$ for each $1< j < k$,  the vertices  $u_{0}$, $u'_{1}$,$a'_{1}$ in a single vertex $p'_{1}$ and the vertices $v_{k-1}'$, $v_{k}$, $a'_{k}$ in a single vertex $p'_{k}$.
\end{defn}

Denote by $C_{j}$  the cycle in $G_{A,B,B'}$ induced by  $V(B_{j}) \cup V(B'_{j}) \cup V(A_{j}) \cup V(A_{j+1}) $, for $0\leq j \leq k$, where $V(B'_{0})=V(B'_{k})=V(A_{0})=V(A_{k+1})=\emptyset$. Note that $C_{j} \cap C_{j+1} =  A_{j+1} $ for every $0 \leq j \leq k-1$ and $C_{j} \cap C_{i} =  \emptyset $ if $\vert i-j \vert > 1$.

The following result is a direct consequence of inequalities  (\ref{eq:0.1})  and  (\ref{eq:0.2}).

\begin{lem} \label{Xx}
  If $x,y\in C_{j}$ with $0 \leq j \leq k$ and $\gamma$ is a geodesic in $G_{A,B,B'}$ joining $x$ and $y$, then $\gamma$ is contained in  $C_{j}$.

 \end{lem}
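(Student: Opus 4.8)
\emph{Proof plan.} First I would read off the global shape of $G_{A,B,B'}$: it is a chain of cycles $C_0,C_1,\dots,C_k$ in which consecutive cells are glued along the connecting paths, $C_{j}\cap C_{j+1}=A_{j+1}$, while non-consecutive cells are disjoint, $C_i\cap C_j=\emptyset$ for $|i-j|>1$. Thus each $C_j$ is attached to the rest of the graph only along the two paths $A_j$ and $A_{j+1}$, both of which are sides of $C_j$; moreover the blocks $C_0\cup\cdots\cup C_{j-1}$ and $C_{j+1}\cup\cdots\cup C_k$ meet these paths only at their endpoints $p_j,p'_j$ and $p_{j+1},p'_{j+1}$, respectively, so the two blocks are joined to each other only through $C_j$. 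This separation, immediate from Definition \ref{defn1}, is the backbone of the argument.

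Then I would argue by contradiction. Assume $x,y\in C_j$ but the geodesic $\gamma=[xy]$ leaves $C_j$. Since $\gamma$ begins and ends on $C_j$, it contains a subarc $\sigma$ whose interior is disjoint from $C_j$ and whose two endpoints lie on $C_j$. By the separation above, the interior of $\sigma$ is contained in a single block, say the left one $C_0\cup\cdots\cup C_{j-1}$ (the right case being symmetric), and $\sigma$ can attach to $C_j$ only at $p_j$ or $p'_j$. A geodesic is a simple arc, so $\sigma$ cannot enter and leave through the same vertex; hence $\sigma$ runs from $p_j$ to $p'_j$.

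Now the two facts about $\sigma$ collide. Being a subarc of the geodesic $\gamma$, $\sigma$ is itself a geodesic, so its length equals $d(p_j,p'_j)$; and since the side $A_j\subseteq C_j$ already joins $p_j$ to $p'_j$ with length $\alpha_j$, we get $L(\sigma)=d(p_j,p'_j)\le\alpha_j$. On the other hand $\sigma$ joins $p_j$ to $p'_j$ while staying in the left block and avoiding $A_j$; the only top-to-bottom passages available there are the connecting paths $A_1,\dots,A_{j-1}$, and crossing at the nearest one, $A_{j-1}$, already costs $\beta_{j-1}+\alpha_{j-1}+\beta'_{j-1}$ (crossing further to the left only adds rail length), whence $L(\sigma)\ge\beta_{j-1}+\alpha_{j-1}+\beta'_{j-1}$. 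Inequality (\ref{eq:0.2}) now yields $\alpha_j<\beta_{j-1}+\alpha_{j-1}+\beta'_{j-1}\le L(\sigma)\le\alpha_j$, a contradiction. The right-hand case is identical after replacing $(p_j,p'_j,A_j,\alpha_j)$ by $(p_{j+1},p'_{j+1},A_{j+1},\alpha_{j+1})$ and the lower bound by $\beta_{j+1}+\alpha_{j+2}+\beta'_{j+1}$, invoking (\ref{eq:0.1}) at index $j+1$. Equivalently, and avoiding the remark that subarcs of geodesics are geodesics, one simply replaces $\sigma$ by the side $A_j$ to obtain a strictly shorter walk from $x$ to $y$.

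I expect the main obstacle to be the bookkeeping behind the separation statement together with the computation that the cheapest detour through a neighbouring block is realised at the adjacent connecting path; once these are in place the hypotheses (\ref{eq:0.1})--(\ref{eq:0.2}) finish the proof mechanically. The extreme cells need only a remark: for $C_0$ no left excursion is possible and for $C_k$ no right one, and the degenerate conventions $\beta'_0=\beta'_k=\alpha_0=\alpha_{k+1}=0$ make the corresponding estimates specialise correctly to the two cap cycles.
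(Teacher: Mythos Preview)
Your argument is correct and is exactly how one unpacks the paper's one-line justification (the paper offers no proof beyond ``direct consequence of inequalities (\ref{eq:0.1}) and (\ref{eq:0.2})''), so there is nothing to compare against. Two small points worth tightening. First, your separation sentence is phrased a bit loosely: the block $C_0\cup\cdots\cup C_{j-1}$ actually contains all of $A_j$ (since $A_j\subset C_{j-1}$), not just its endpoints; what you really need---and what your contradiction step uses---is that $(C_0\cup\cdots\cup C_{j-1})\setminus A_j$ meets $C_j$ only at $\{p_j,p'_j\}$, which is immediate from Definition~\ref{defn1}. Second, the parenthetical ``crossing further to the left only adds rail length'' silently relies on (\ref{eq:0.2}) at smaller indices; the clean way to justify $L(\sigma)\ge\beta_{j-1}+\alpha_{j-1}+\beta'_{j-1}$ is to note that $\sigma$ is forced to begin with all of $B_{j-1}$ and end with all of $B'_{j-1}$ (no branching along those paths), leaving a simple middle segment from $p_{j-1}$ to $p'_{j-1}$ inside $C_0\cup\cdots\cup C_{j-2}$, and then to prove by induction on $i$ that every such path from $p_i$ to $p'_i$ has length at least $\alpha_i$. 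You already flagged this bookkeeping as the main obstacle, and once it is written out the rest is exactly as you say.
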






The following proposition gives an upper bound for the hyperbolicity constant of the graphs in this family.

\begin{prop} \label{general}

  $$\delta(G_{A,B,B'})\leq  \max{  \Big\lbrace  \smash{\displaystyle\max_{0 \leq j \leq k}}  { \dfrac{\beta_{j}+\beta'_{j}+ \max{ \lbrace 3\alpha_{j}+\alpha_{j+1}, \alpha_{j}+3\alpha_{j+1} \rbrace}  }{4}  } ,  \smash{\displaystyle\max_{0 < j < k}}  { \dfrac{\alpha_{j}+\alpha_{j+1} + \max{ \lbrace \beta_{j}, \beta'_{j}\rbrace}  }{2}  }    \Big\rbrace} .$$
 \end{prop}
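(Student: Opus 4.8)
The plan is to bound the thin constant $\delta(T)$ of a geodesic triangle $T=\{x,y,z\}$ and take the supremum. By Theorem \ref{t:003} we may assume $T\in\mathbb{T}_1$, so $T$ is a cycle whose sides meet only at $x,y,z$; then for a point $p$ on a side the two remaining sides are well defined, and it suffices to prove $d\big(p,\sigma\big)\le R$ for every such $p$, where $\sigma$ is the union of the two sides not through $p$ and $R$ is the right-hand side of the claimed inequality.

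The key is that $G_{A,B,B'}$ is a chain of the cycles $C_0,\dots,C_k$ with $C_j\cap C_{j+1}=A_{j+1}$ and $C_i\cap C_j=\emptyset$ for $|i-j|>1$. By (\ref{eq:0.1})--(\ref{eq:0.2}) each $A_j$ is the unique geodesic between $p_j$ and $p'_j$, so by Lemma \ref{Xx} the only way to pass from the interior of $C_j$ to a neighbouring cycle is through one of the four gateway vertices $p_j,p'_j$ (to the left) and $p_{j+1},p'_{j+1}$ (to the right). Fix $p\in[xy]$ and let $C_j$ be a cycle containing it; since $[xy]$ runs through $C_j$, this cycle lies between $x$ and $y$ in the chain, and a short inspection of the position of $z$ shows that at least one of $[xz],[yz]$ also meets $C_j$ or has a gateway of $C_j$ on it. Hence $d(p,\sigma)$ is computed entirely from the geometry of $C_j$ together with the length a competing side must spend on a rung $A_j$ or $A_{j+1}$ before it becomes accessible. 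Note that at the extremes $\beta'_0=\beta'_k=\alpha_0=\alpha_{k+1}=0$ collapse $C_0$ and $C_k$ to two-arc cycles, which is why the second term of $R$ ranges only over $0<j<k$.

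With $p$ on the top rail $B_j$ at distance $s$ from $p_j$ I would then split into two regimes. If a competing side runs along the opposite rail $B'_j$, it can be reached only through a bottom gateway, so $d(p,B'_j)=\min\{\,s+\alpha_j,\ (\beta_j-s)+\alpha_{j+1}\,\}$, which is maximal when the two terms agree, giving $\tfrac12(\alpha_j+\alpha_{j+1}+\beta_j)$; the symmetric computation for $p$ on $B'_j$ gives $\tfrac12(\alpha_j+\alpha_{j+1}+\beta'_j)$, and the larger of the two is exactly the second term of $R$. In every other configuration the competing sides lie behind the gateways of $C_j$ and $d(p,\sigma)$ is bounded by the self-thinness of the cycle $C_j$, namely $\tfrac14(\alpha_j+\alpha_{j+1}+\beta_j+\beta'_j)$, augmented by at most half of the longer incident rung because a side may have to descend a full rung before meeting $\sigma$; here the inequalities (\ref{eq:0.1})--(\ref{eq:0.2}) are exactly what keep these detours bounded. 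This yields $\tfrac14(\alpha_j+\alpha_{j+1}+\beta_j+\beta'_j)+\tfrac12\max\{\alpha_j,\alpha_{j+1}\}$, and since $\alpha_j+\alpha_{j+1}+2\max\{\alpha_j,\alpha_{j+1}\}=\max\{3\alpha_j+\alpha_{j+1},\,\alpha_j+3\alpha_{j+1}\}$ this is precisely the first term of $R$. Taking the larger regime and maximizing over $j$ gives the bound.

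The two optimizations are routine; the real work is the case analysis hidden in the second regime. One must check that every distribution of $x,y,z$ among the cycles and every routing of the three sides through the gateways --- including the degenerate cases where two sides share a rung, where a triangle vertex lies in the interior of a shared $A_j$, or where the competing side approaches $C_j$ through its far gateways --- falls under one of the two regimes and is dominated by the corresponding term, with the defining inequalities (\ref{eq:0.1})--(\ref{eq:0.2}) invoked precisely to cap the rung detours. Lemma \ref{Xx} is what makes this analysis finite, confining it to the three consecutive cycles $C_{j-1},C_j,C_{j+1}$ around any chosen point $p$.
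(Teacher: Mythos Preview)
Your outline follows the same strategy as the paper --- reduce via Theorem~\ref{t:003} to a cycle $T\in\mathbb{T}_1$, locate $p$ in some $C_j$, and bound $d(p,[xz]\cup[yz])$ by either the ``opposite rail'' estimate $(\alpha_j+\alpha_{j+1}+\max\{\beta_j,\beta'_j\})/2$ or the ``end-cycle'' estimate $L(C_j)/4+\tfrac12\max\{\alpha_j,\alpha_{j+1}\}$. The numerical optimizations you carry out are correct and match the paper's Cases~(2.1)--(2.3).

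What the paper does that your sketch does not is to \emph{classify} $T$ up front: since $T$ is a simple cycle in the chain $C_0,\dots,C_k$, it is either some $C_j$ or the outer boundary of a consecutive block, namely the closure of $(C_i\cup\cdots\cup C_{i+r})\setminus(A_{i+1}\cup\cdots\cup A_{i+r})$. With this in hand the case analysis collapses to three clean situations --- $p$ in an interior cycle of the block (your Regime~1), $p$ in the leftmost cycle $C_i$, or $p$ in the rightmost $C_{i+r}$ --- and the ``rung detour'' in the latter two is exactly one specific $A_{i+1}$ (resp.\ $A_{i+r}$), not a worst-case max over both. Your proposal instead leaves this as ``the real work is the case analysis hidden in the second regime \dots\ one must check every distribution of $x,y,z$ \dots'', which is an honest admission but means the argument as written is incomplete. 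The missing step is precisely the observation that any simple cycle in $G_{A,B,B'}$ bounds a \emph{contiguous} run of faces $C_i,\dots,C_{i+r}$; once you state and use that, your two regimes become the paper's three cases and the verification is short rather than open-ended.
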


\begin{proof}
In order to simplify notation, we shall write $G=G_{A,B,B'}$.

Theorem \ref{t:003} gives that there exists  some geodesic triangle $T=\lbrace x,y,z \rbrace\in \mathbb{T}_1$ and $p\in [xy]$ such that $\delta(G)=\delta(T)=d_{G}(p, [xz] \cup [yz])$.

Case (1).
Assume first that $T=C_{j}$ for some $0\leq j \leq k$. Thus, $\delta (G) =\delta(T)= L(C_{j})/4=( \beta_{j}+\beta'_{j}+ \alpha_{j}+\alpha_{j+1})/4$.

Case (2).
Assume now that $T$ is the closure of $(C_{i} \cup C_{i+1}\cup \dots \cup C_{i+r}) \smallsetminus ( A_{i+1} \cup A_{i+2}\cup \dots \cup A_{i+r})$, for some $0 \leq i < i+r\leq k$.

Case (2.1).
Assume $p\in C_{j}$, with $i < j <i+r $. Then, $p \in B_{j} \cup B'_{j} $.

Assume  $p\in B_{j}$. Since $i < j <i+r $, Lemma \ref{Xx} gives that $B'_{j}\subseteq [xz] \cup [yz]$. Thus, we conclude
$$\delta (G) =d_{G}(p, [xz] \cup [yz]) \leq  d_{G}(p, \{p'_{j},p'_{j+1}\})  \leq \dfrac{1}{2}( \beta_{j}+ \alpha_{j}+\alpha_{j+1}).$$

Similarly, if $p\in B'_{j}$, we conclude  $\delta (G) \leq ( \beta'_{j}+ \alpha_{j}+\alpha_{j+1})/2$.

Case (2.2).
Assume $p\in C_{i}$.

If  $[xy] \cap \lbrace p_{i+1},p'_{i+1} \rbrace = \emptyset$, then $ [xy] \subset C_{i} $ and $L([xy])\leq L(C_{i})/2$. Thus,
$$\delta (G)= d_{G}(p, [xz] \cup [yz]) \leq  d_{G}(p, \{x,y\}) \leq \dfrac{1}{2} L([xy])\leq  \dfrac{1}{4} L(C_{i})  \leq \dfrac{1}{4}( \beta_{i}+\beta'_{i}+ \alpha_{i}+\alpha_{i+1}).$$

If  $ p_{i+1} \in [xy]$, then by inequality ($\ref{eq:0.2}$) we have $p'_{i+1} \in [xz] \cup [yz]$ and $L([xy] \cap C_{i} )\leq L(C_{i})/2$.

Note that  $p \in [xy]\cap C_{i} \subset ([xy]\cap C_{i} )\cup A_{i+1}$,  $L(([xy]\cap C_{i} )\cup A_{i+1}) \leq L(C_{i})/2 + L(A_{i+1})$ and the endpoints of $([xy]\cap C_{i} )\cup A_{i+1}$ belong to $[xz] \cup [yz]$. Thus,
$$\delta (G) \leq   \dfrac{ L(C_{i})}{4}+   \frac{\alpha_{i+1}}{2} \leq   \frac{\beta_{i}+\beta'_{i}+ \alpha_{i}+\alpha_{i+1}}{4}  + \frac{\alpha_{i+1}}{2} =  \frac{\beta_{i}+\beta'_{i}+ \alpha_{i}+3\alpha_{i+1}}{4}. $$

Analogously, if  $ p'_{i+1} \in [xy]$, we obtain the same result.

Case (2.3).
Finally, if $p\in C_{i+r}$, then a similar argument to the one in (2.2) gives
$$\delta (G) \leq   \frac{\beta_{i+r}+\beta'_{i+r}+ 3\alpha_{i+r}+\alpha_{i+r+1}}{4}. $$

Since
$$ \beta_{0}+\beta'_{0}+\alpha_{0}+3\alpha_{1} = \beta_{0}+3\alpha_{1}=\beta_{0}+\beta'_{0}+ \max{ \lbrace 3\alpha_{0}+ \alpha_{1} , \alpha_{0}+ 3\alpha_{1} \rbrace}  ,$$
$$ \beta_{k}+\beta'_{k}+3\alpha_{k}+\alpha_{k+1} = \beta_{k}+3\alpha_{k}=\beta_{k}+\beta'_{k}+ \max{ \lbrace 3\alpha_{k}+ \alpha_{k+1} , \alpha_{k}+ 3\alpha_{k+1} \rbrace}  ,$$
we conclude
 $$\delta(G_{A,B,B'})\leq  \max{  \Big\lbrace  \smash{\displaystyle\max_{0 \leq j \leq k}}  { \dfrac{\beta_{j}+\beta'_{j}+ \max{ \lbrace 3\alpha_{j}+\alpha_{j+1}, \alpha_{j}+3\alpha_{j+1} \rbrace}  }{4}  } ,  \smash{\displaystyle\max_{0 < j < k}}  { \dfrac{\alpha_{j}+\alpha_{j+1} + \max{ \lbrace \beta_{j}, \beta'_{j}\rbrace}  }{2}  }    \Big\rbrace} $$
 in any case.
\end{proof}

Proposition \ref{general} has the following consequence.

\begin{cor} \label{CX}
If $\alpha_{j}=1$ for $0< j < k$, then
$$\delta(G_{A,B,B'})\leq    \max{ \bigg\lbrace \frac{1}{2} +\frac{1}{4}L(C_{0}), \frac{1}{2} +\frac{1}{4}L(C_{k}),\smash{\displaystyle\max_{0 < j < k}}  { \dfrac{2 + \max{ \lbrace \beta_{j}, \beta'_{j}\rbrace}  }{2}  } \bigg\rbrace} .       $$

 \end{cor}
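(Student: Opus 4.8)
The plan is to derive Corollary~\ref{CX} directly from Proposition~\ref{general} by substituting the hypothesis $\alpha_j=1$ for $0<j<k$ into the upper bound and simplifying each of the two nested maxima. The work splits naturally according to which index $j$ is being considered, since the endpoint indices behave differently from the interior ones.

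First I would handle the second maximum, taken over $0<j<k$. For each such $j$ we have both $\alpha_j=1$ and $\alpha_{j+1}=1$ (the latter holds because $j+1\le k$, and for $j+1=k$ we still have $0<k$, but one must check the boundary carefully: the hypothesis gives $\alpha_j=1$ only for $0<j<k$, so for the term with $j=k-1$ we use $\alpha_{k-1}=1$ and $\alpha_k=1$, both valid as long as $k-1>0$ and $k<k$ fails—here I would note that $\alpha_k=1$ requires $k<k$, which is false, so actually the relevant interior indices are those with $\alpha_j=\alpha_{j+1}=1$, i.e. $j$ and $j+1$ both strictly between $0$ and $k$). Granting that the sum $\alpha_j+\alpha_{j+1}=2$ for the interior terms, the second maximum collapses to $\displaystyle\max_{0<j<k}\frac{2+\max\{\beta_j,\beta'_j\}}{2}$, which is exactly the third entry in the corollary's maximum.

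Next I would treat the first maximum, $\displaystyle\max_{0\le j\le k}\frac{\beta_j+\beta'_j+\max\{3\alpha_j+\alpha_{j+1},\alpha_j+3\alpha_{j+1}\}}{4}$. Here I would isolate the two endpoint terms $j=0$ and $j=k$ from the interior ones. Using $\beta'_0=\alpha_0=0$ and the computation already displayed at the end of the proof of Proposition~\ref{general}, the $j=0$ term equals $\frac{\beta_0+3\alpha_1}{4}=\frac{1}{2}+\frac{\beta_0+\alpha_1}{4}$; since $\alpha_0=0$ and $L(C_0)=\beta_0+\beta'_0+\alpha_0+\alpha_1=\beta_0+\alpha_1$, this is precisely $\frac{1}{2}+\frac14 L(C_0)$, giving the first entry. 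Symmetrically the $j=k$ term yields $\frac12+\frac14 L(C_k)$, the second entry. For any interior index $j$ with $0<j<k$ one has $\alpha_j=\alpha_{j+1}=1$, so $\max\{3\alpha_j+\alpha_{j+1},\alpha_j+3\alpha_{j+1}\}=4$ and the term becomes $\frac{\beta_j+\beta'_j+4}{4}=1+\frac{\beta_j+\beta'_j}{4}$; I would then argue this is dominated by the interior term of the \emph{second} maximum, since $1+\frac{\beta_j+\beta'_j}{4}\le 1+\frac{\max\{\beta_j,\beta'_j\}}{2}=\frac{2+\max\{\beta_j,\beta'_j\}}{2}$ using $\beta_j+\beta'_j\le 2\max\{\beta_j,\beta'_j\}$. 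Hence the interior contributions of the first maximum are absorbed and need not appear separately.

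The main obstacle I anticipate is the careful bookkeeping at the boundary indices $j=1$ and $j=k-1$, where one of $\alpha_j,\alpha_{j+1}$ may be an endpoint value rather than $1$. Specifically, for the interior term $j=1$ of the first maximum, $\alpha_1$ need not equal $1$ (the hypothesis only fixes $\alpha_j=1$ for $0<j<k$, and $\alpha_1$ qualifies only if $1<k$), and similarly $\alpha_{k}$ at $j=k-1$; I would verify that in the degenerate small-$k$ cases (e.g. $k=1$ or $k=2$) the stated maximum still holds, possibly with empty index ranges understood to contribute nothing. Once the endpoint terms are matched to $\frac12+\frac14 L(C_0)$ and $\frac12+\frac14 L(C_k)$ and the interior terms of the first maximum are shown to be dominated by the second maximum's terms, combining the two maxima yields exactly the three-way maximum in the statement, completing the proof.
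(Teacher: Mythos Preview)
Your approach is essentially the same as the paper's: substitute the hypothesis into the bound of Proposition~\ref{general}, rewrite each first-maximum term as $\frac{1}{4}L(C_j)+\frac{1}{2}$, and observe that for interior $j$ this is dominated by $\frac{2+\max\{\beta_j,\beta'_j\}}{2}$ via $\beta_j+\beta'_j\le 2\max\{\beta_j,\beta'_j\}$. The paper's proof is just a terser version of exactly this computation.

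Your worry about $\alpha_k$ is well-placed and is not merely a small-$k$ degeneracy to be checked. The hypothesis as written, $\alpha_j=1$ for $0<j<k$, does not constrain $\alpha_k$, yet the argument genuinely needs $\alpha_k=1$: the $j=k$ term of the first maximum equals $\frac{\beta_k+3\alpha_k}{4}=\frac{\alpha_k}{2}+\frac{1}{4}L(C_k)$, which matches $\frac{1}{2}+\frac{1}{4}L(C_k)$ only when $\alpha_k=1$, and similarly the $j=k-1$ term of the second maximum needs $\alpha_{k-1}+\alpha_k=2$. The paper's proof does not address this either; it simply asserts ``$=L(C_j)+2$, for $0\le j\le k$'', which already uses $\max\{\alpha_j,\alpha_{j+1}\}=1$ at $j=k$. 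In every application of the corollary in the paper the authors in fact take $\alpha_j=1$ for all $1\le j\le k$, so the intended hypothesis is evidently $0<j\le k$; under that reading your argument goes through without further case analysis and coincides with the paper's.
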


\begin{proof}
We have $  \beta_{j}+\beta'_{j}+  \max{ \lbrace 3\alpha_{j}+ \alpha_{j+1} , \alpha_{j}+ 3\alpha_{j+1} \rbrace} =     \beta_{j}+\beta'_{j}+ \alpha_{j}+\alpha_{j+1}+ \max{ \lbrace 2\alpha_{j}, 2\alpha_{j+1} \rbrace} = L(C_{j}) +2 $, for $0\leq j\leq k$. Furthermore, if $0<j<k$, then    $  \beta_{j}+\beta'_{j}+  \max{ \lbrace 3\alpha_{j}+ \alpha_{j+1} , \alpha_{j}+ 3\alpha_{j+1} \rbrace} =\beta_{j}+\beta'_{j}+4 \leq 2(\max{ \lbrace \beta_{j}, \beta'_{j}\rbrace}+2) $.
\end{proof}


In what follows, we denote by  $C_{a_{1},a_{2},a_{3}}$ the graph with three paths  with lengths  $a_{1} \leq a_{2} \leq a_{3}$ joining  two fixed vertices.

The next corollary was proved in \cite[Theorem 12]{RSVV}. We provide here a simpler proof following a different approach.

\begin{cor} \label{c:2RSSV}
  $\delta(C_{a_{1},a_{2},a_{3}}) =(a_{3}+ \min \{a_{2},3a_{1}\}) /4 .$
 \end{cor}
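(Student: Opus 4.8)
The plan is to realize $C_{a_1,a_2,a_3}$ as a member of the family $G_{A,B,B'}$ and then apply Proposition~\ref{general} for the upper bound, matching it with a lower bound from a concretely chosen geodesic triangle. First I would observe that $C_{a_1,a_2,a_3}$ has exactly two vertices of degree three, say $u$ and $w$, joined by three internally disjoint paths of lengths $a_1\le a_2\le a_3$. To fit Definition~\ref{defn1}, I would take $k=1$, so that there is a single cycle to worry about in the decomposition. With $k=1$ the only index for the $A_j$'s is $j=1$, and $\alpha_0=\alpha_2=0$; the two remaining paths should be assigned to $B_0,B_1$ (the ``outer'' paths) and to $A_1$ (an ``inner'' path). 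A clean choice is $\beta_0=a_3$, $\beta_1=a_2$, and $\alpha_1=a_1$, with $\beta'_0=\beta'_1=0$ (these are forced to vanish by the definition anyway when $k=1$). I would check that conditions~(\ref{eq:0.1}) and~(\ref{eq:0.2}) reduce, for $k=1$, to $\alpha_1<\beta_1+\beta_0$ type inequalities, i.e. $a_1<a_2+a_3$, which holds since these are three sides of a ``triangle'' of paths (and one should handle the mild degenerate cases separately).

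Granting the identification, the upper bound follows by specializing Proposition~\ref{general}. With $k=1$ the second maximum (over $0<j<k$) is empty, so only the first maximum survives, taken over $j=0$ and $j=1$. Plugging in $\beta'_0=\beta'_1=0$, $\alpha_0=\alpha_2=0$, $\beta_0=a_3$, $\beta_1=a_2$, $\alpha_1=a_1$, the $j=0$ term is
$$\frac{\beta_0+\max\{3\alpha_0+\alpha_1,\ \alpha_0+3\alpha_1\}}{4}=\frac{a_3+\min\{a_1,3a_1\}}{4}=\frac{a_3+a_1}{4},$$
while the $j=1$ term is
$$\frac{\beta_1+\max\{3\alpha_1+\alpha_2,\ \alpha_1+3\alpha_2\}}{4}=\frac{a_2+3a_1}{4}.$$
Taking the larger of these two and comparing with $a_3+\min\{a_2,3a_1\}$, I expect to recover exactly $(a_3+\min\{a_2,3a_1\})/4$; the main care here is the case split on whether $a_2\le 3a_1$ or $a_2>3a_1$, since $\min\{a_2,3a_1\}$ switches branch and one must confirm that the maximum of the two Proposition~\ref{general} terms agrees with the claimed value in each branch.

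For the matching lower bound, the plan is to exhibit an explicit geodesic triangle attaining the value, rather than optimizing abstractly. I would form the triangle whose three vertices are the two degree-three vertices $u,w$ together with the midpoint $p$ of the shortest path $a_1$ (or, depending on the branch, a suitably chosen point), so that two sides run along the two shortest paths of lengths $a_1$ and $a_2$ and the third side is the longest path of length $a_3$; then compute the distance from the far midpoint of the $a_3$-side to the union of the other two sides. By Theorem~\ref{t:003} it suffices to test triangles with vertices in $J(G)$, so such midpoint triangles are natural candidates, and a direct distance computation in the concrete graph $C_{a_1,a_2,a_3}$ should yield the required quantity. The hard part will be the lower-bound computation: one must choose the triangle vertices correctly in each of the two branches ($a_2\le 3a_1$ versus $a_2>3a_1$) and verify that the chosen paths are genuinely geodesics (so that the configuration is an admissible geodesic triangle), which is where the inequalities $a_1\le a_2\le a_3$ and the triangle-type condition $a_1<a_2+a_3$ are used to guarantee the sides realize the graph distances. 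Combining the two bounds gives the equality.
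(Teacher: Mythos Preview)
Your overall strategy matches the paper's, but there is a genuine gap in the upper-bound step, and two smaller errors along the way.

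\textbf{The main gap.} Proposition~\ref{general} alone does \emph{not} give the sharp upper bound in the branch $a_2<3a_1$. In your $j=0$ computation you replaced $\max\{3\alpha_0+\alpha_1,\ \alpha_0+3\alpha_1\}=\max\{a_1,3a_1\}=3a_1$ by $\min\{a_1,3a_1\}=a_1$; the correct $j=0$ term is $(a_3+3a_1)/4$, not $(a_3+a_1)/4$. Consequently the maximum over $j\in\{0,1\}$ is $(a_3+3a_1)/4$ (since $a_2\le a_3$), which is exactly Corollary~\ref{X}. When $a_2<3a_1$ this exceeds the claimed value $(a_3+a_2)/4$, so Proposition~\ref{general} is not enough. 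The missing ingredient is the circumference bound of Theorem~\ref{t:1CRSV}: $\delta(C_{a_1,a_2,a_3})\le c(C_{a_1,a_2,a_3})/4=(a_2+a_3)/4$. Combining this with the Proposition~\ref{general} bound $(a_3+3a_1)/4$ yields $\delta\le (a_3+\min\{a_2,3a_1\})/4$, as in the paper.

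\textbf{A secondary error.} Conditions~(\ref{eq:0.1}) and~(\ref{eq:0.2}) for $k=1$, $j=1$ do not reduce to $a_1<a_2+a_3$; they reduce to $\alpha_1<\beta_1$ and $\alpha_1<\beta_0$, i.e.\ $a_1<a_2$ and $a_1<a_3$. So the identification with $G_{A,B,B'}$ fails when $a_1=a_2$, and that boundary case must be handled separately (the paper does this directly via the circumference bound, since then $(a_3+a_2)/4=(a_3+\min\{a_2,3a_1\})/4$).

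Your lower-bound plan (explicit geodesic triangles, one for each branch) is the right idea and is what the paper does; just be aware that the triangle vertices the paper actually uses are not the degree-three vertices $u,w$ together with a midpoint of the $a_1$-path, but rather carefully placed points on $B_0$ and $B_1$ so that the resulting sides are genuine geodesics of the required length.
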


\begin{proof}

Consider a graph $G_{A,B,B'}$ as in Definition \ref{defn1}, with $k=1$, $\alpha_{1}=a_{1}$, $\beta_{0}=a_{3}$  and $\beta_{1}=a_{2}$. Thus, $G_{A,B,B'}$ is the union of three paths $A_{0},B_{0},B_{1}$ joining $p_{1}$ and $p'_{1}$.

Since $\beta'_{0}=\beta'_{1}=\alpha_{0}=\alpha_{2}=0$ and $a_{1}< a_{2}\leq a_{3}$, we have $\alpha_{1} < \min{  \lbrace \beta_{0},\beta_{1} \rbrace  }$, and equations (\ref{eq:0.1}) and (\ref{eq:0.2}) hold. Thus, we can write  $G_{A,B,B'}=C_{a_{1},a_{2},a_{3}}$.

Assume first that  $3a_{1}\leq a_{2}$.

Let $T= \lbrace x,y,z \rbrace$  be the geodesic triangle which is  the closure of $(C_{0} \cup C_{1}) \smallsetminus A_{1}$, with $x\in B_{0} $, $y,z\in B_{1}$, $d(x,p_{1})=(a_{3}+a_{1})/2$, $d(y,p_{1})=a_{1}$ and $d(z,p_{1})=3a_{1}$.  Let  $p$ be the midpoint of $[xy]$. Then,
$$\delta(T)= d_{G}(p, [xz] \cup [yz])=d_{G}(p, \lbrace x,y\rbrace)=\dfrac{L([xy])}{2}=  \dfrac{1}{2}  \Big( \dfrac{a_{3} +a_{1}}{2} +a_{1}  \Big)  =   \dfrac{a_{3} +3a_{1}}{4}.$$

Therefore,  $\delta(C_{a_{1},a_{2},a_{3}}) \geq(a_{3}+ 3a_{1}) /4 $.



If $ a_{2} < 3a_{1}$, then let $T= \lbrace x,y,z \rbrace$  be the geodesic triangle which is  the closure of $(C_{0} \cup C_{1}) \smallsetminus A_{1}$, with $x\in B_{0} $, $y\in B_{1}$, $d(x,p_{1})=(a_{3}+a_{1})/2$, $d(y,p_{1})=(a_{2}-a_{1})/2 < a_{1}$ and $z=p'_{1}$.  Let  $p$ be the midpoint of $[xy]$. Then,
$$\delta(T)= d_{G}(p, [xz] \cup [yz])=d_{G}(p, \lbrace x,y\rbrace)=\dfrac{L([xy])}{2}=  \dfrac{1}{2}  \Big( \dfrac{a_{3} +a_{1}}{2} + \dfrac{a_{2} -a_{1}}{2}  \Big)  =   \dfrac{a_{3} +a_{2}}{4}.$$

Thus, $\delta(C_{a_{1},a_{2},a_{3}}) \geq(a_{3}+ \min \{a_{2},3a_{1}\}) /4 $ in both cases.

Let us prove the converse inequality. Assume first that $a_{1}< a_{2}$. Proposition \ref{general} gives $\delta(C_{a_{1},a_{2},a_{3}})\leq  \smash{\displaystyle\max}{ \lbrace a_{3}+  3a_{1}, a_{2}+3a_{1}    \rbrace} /4 = (a_{3}+3a_{1})/4$. On the other hand, Theorem \ref{t:1CRSV} gives $\delta(C_{a_{1},a_{2},a_{3}})\leq  c(C_{a_{1},a_{2},a_{3}}) /4 = (a_{3}+a_{2})/4$. Thus, we conclude $\delta(C_{a_{1},a_{2},a_{3}}) \leq(a_{3}+ \min \{a_{2},3a_{1}\}) /4 $.




Finally, assume that $a_{1}= a_{2}$. Thus, $\delta(C_{a_{1},a_{2},a_{3}})\leq  c(C_{a_{1},a_{2},a_{3}}) /4 = (a_{3}+a_{2})/4=(a_{3}+ \min \{a_{2},3a_{1}\}) /4 $.


Thus, we conclude  $\delta(C_{a_{1},a_{2},a_{3}}) =(a_{3}+ \min \{a_{2},3a_{1}\}) /4 .$
\end{proof}

\begin{cor} \label{X}
  $\delta(C_{a_{1},a_{2},a_{3}}) \leq(a_{3}+ 3a_{1}) /4 $.
 \end{cor}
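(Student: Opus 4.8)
The plan is to obtain this bound as an immediate consequence of the exact formula established just above in Corollary \ref{c:2RSSV}, namely $\delta(C_{a_{1},a_{2},a_{3}}) = (a_{3}+ \min \{a_{2},3a_{1}\}) /4$. Since the minimum of two quantities never exceeds either one of them, we have $\min\{a_{2},3a_{1}\} \leq 3a_{1}$ regardless of the relative sizes of $a_{2}$ and $3a_{1}$.

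First I would recall the equality from Corollary \ref{c:2RSSV}. Then I would replace $\min\{a_{2},3a_{1}\}$ by the upper bound $3a_{1}$, which yields $\delta(C_{a_{1},a_{2},a_{3}}) = (a_{3}+ \min \{a_{2},3a_{1}\})/4 \leq (a_{3}+3a_{1})/4$, as desired. This completes the argument; both the case $a_{2} \leq 3a_{1}$ and the case $a_{2} > 3a_{1}$ are handled uniformly by the monotonicity of the map $t \mapsto (a_{3}+t)/4$.

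There is no genuine obstacle here, since the bound is strictly weaker than the sharp value already computed: the only content is the elementary inequality $\min\{a_{2},3a_{1}\} \leq 3a_{1}$. The sole point worth noting is that this corollary trades the exact (and sometimes smaller) value $(a_{3}+a_{2})/4$ for the more convenient uniform bound $(a_{3}+3a_{1})/4$ that depends only on the shortest and longest of the three paths, which is the form presumably needed in the subsequent extremal estimates.
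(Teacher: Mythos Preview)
Your proof is correct and matches the paper's intent: the corollary is stated immediately after Corollary~\ref{c:2RSSV} with no proof, precisely because it follows from the exact formula via the trivial inequality $\min\{a_{2},3a_{1}\}\leq 3a_{1}$.
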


\begin{lem} \label{Z}
 For every graph $G$, $ \diam (G)\leq2$ if and only if $d(v,e)\leq 1$ for every $v\in V(G)$  and  $e\in E(G)$.
 \end{lem}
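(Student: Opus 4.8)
The plan is to translate both sides of the equivalence into combinatorial conditions on vertex distances and prove the two implications separately. The key preliminary observation is that for an edge $e=[a,b]\in E(G)$ the nearest point of $e$ to any $v\in V(G)$ is one of its endpoints, so that $d(v,e)=\min\{d(v,a),d(v,b)\}$. In particular $d(v,e)$ is a non-negative integer, and the midpoint $m$ of $e$ satisfies $d(v,m)=d(v,e)+\tfrac12$. I will use these two facts throughout.

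For the direct implication I would assume $\diam(G)\le 2$ and fix $v\in V(G)$ and $e\in E(G)$. Applying the diameter bound to $v$ and the midpoint $m$ of $e$ gives $d(v,e)+\tfrac12=d(v,m)\le 2$, whence $d(v,e)\le\tfrac32$; since $d(v,e)$ is an integer, this yields $d(v,e)\le 1$. This is the easy direction.

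For the converse I would assume $d(v,e)\le 1$ for all $v\in V(G)$ and $e\in E(G)$, and take arbitrary points $p,q\in G$, say $p$ on an edge $[a,b]$ at distance $s$ from $a$ and $q$ on an edge $[c,d]$ at distance $t$ from $c$ (the cases where $p$ or $q$ is a vertex, or where the two edges coincide or are adjacent, are limiting or trivial instances of this). Writing $A=d(a,c)$, $B=d(a,d)$, $C=d(b,c)$, $D=d(b,d)$, any path from $p$ to $q$ must leave $[a,b]$ through $a$ or $b$ and enter $[c,d]$ through $c$ or $d$, so
$$d(p,q)\le\min\{\,s+t+A,\ s+(1-t)+B,\ (1-s)+t+C,\ (1-s)+(1-t)+D\,\}.$$
Averaging the first and fourth terms gives $d(p,q)\le 1+\tfrac12(A+D)$, and averaging the second and third gives $d(p,q)\le 1+\tfrac12(B+C)$, hence $d(p,q)\le 1+\tfrac12\min\{A+D,\,B+C\}$.

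It then remains to verify $\min\{A+D,\,B+C\}\le 2$, and this is the step I expect to be the main obstacle, since it is exactly where one controls interior points of edges rather than just vertices. Applying the hypothesis to the four vertex–edge pairs $(a,[c,d])$, $(b,[c,d])$, $(c,[a,b])$ and $(d,[a,b])$ yields $\min\{A,B\}\le 1$, $\min\{C,D\}\le 1$, $\min\{A,C\}\le 1$ and $\min\{B,D\}\le 1$. A short case analysis, splitting on whether $A\le 1$ or $A\ge 2$, then shows that $A+D\ge 3$ and $B+C\ge 3$ cannot both hold: if $A\ge 2$ the first and third conditions force $B\le 1$ and $C\le 1$, while if $A\le 1$ then $A+D\ge 3$ gives $D\ge 2$ and the remaining conditions force $B\le 1$ and $C\le 1$; either way $B+C\le 2$. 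Thus $\min\{A+D,\,B+C\}\le 2$, so $d(p,q)\le 2$, and since $p,q$ were arbitrary we conclude $\diam(G)\le 2$.
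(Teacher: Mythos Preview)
Your proof is correct. The forward implication is essentially identical to the paper's. For the converse, however, you take a genuinely different route. The paper argues by checking only the three cases vertex--vertex, vertex--midpoint, and midpoint--midpoint, bounding each by a single application of the triangle inequality through an endpoint of one of the edges; it then concludes $\diam(G)\le 2$, implicitly using the (standard) fact that in a metric graph with unit edges the diameter is realised on $J(G)$. You instead treat arbitrary interior points $p,q$ directly, parametrising by the four integer vertex-distances $A,B,C,D$, and reduce the problem via the averaging trick to the combinatorial inequality $\min\{A+D,\,B+C\}\le 2$, which you then settle by a short case analysis using the four vertex--edge hypotheses. Your argument is longer but entirely self-contained: it does not appeal to any auxiliary fact about where the diameter is attained, whereas the paper's shorter proof leans on that background result.
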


\begin{proof}
Assume that $ \diam (G)\leq2$. Given $v\in V(G)$  and  $e\in E(G)$, if $p$ is the midpoint of $e$, then $d(v,p)\leq 3/2$, since $d(v,p) $ is an odd multiple of $1/2$ less than 2. Hence, $d(v,e) =d(v,p)-1/2 \leq1$.

Assume now that $d(v,e)\leq1$ for $v\in V(G)$  and  $e\in E(G)$.  Given $v,w\in V(G)$, choose  $e\in E(G)$ with $w\in e$; thus, $d(v,w)\leq d(v,e)+1 \leq 2$. If $v\in V(G)$ and $p$ is the midpoint of $e\in E(G)$, then $d(p,v)\leq d(v,e)+1/2 \leq 3/2$. Finally, consider $p,q$ midpoints of $e_{p},e_{q} \in E(G)$, respectively; if $v$ is a vertex of $e_{q}$, then $d(v,p)\leq  3/2$ and $d(p,q)\leq d(p,v)+d(v,q)\leq 3/2+1/2=2$. Hence, $ \diam (G)\leq2$.
\end{proof}






\begin{lem} \label{000}
The integers  $a_{1}:=n-c+1$, $a_{2}:=g+c-n-1$ and  $a_{3}:=n-g+1$ are the only real numbers satisfying

$(1)$ the following equations:
 \begin{equation} \label{eq:11}
a_{1}+a_{2}=  g,
\end{equation}
\begin{equation} \label{eq:22}
a_{2}+a_{3}=  c,
\end{equation}
\begin{equation} \label{eq:33}
a_{1}+a_{2}+a_{3}=  n+1.
\end{equation}

$(2)$ $a_{1} \leq a_{2}$ $\Leftrightarrow$ $n\leq c-1+g/2$.

$(3)$ $a_{2} \leq a_{3}$ $\Leftrightarrow$ $n\geq g-1+c/2$.

$(4)$ $a_{2}\leq 3 a_{1}$  $\Leftrightarrow$ $n\geq c-1 + g/4$.

$(5)$ $a_{1}\geq1$ and $ a_{2}\geq 2$  if $a_{1}=1$.
\end{lem}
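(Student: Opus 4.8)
The statement is essentially a bookkeeping lemma, so the plan is to organize it into its five parts and verify each one directly, invoking the standing hypothesis $3\le g\le c\le n\le m$ only where it is actually needed, namely in part $(5)$. Nothing deep is required; the whole content is one linear system plus three linear inequalities plus one case distinction.

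For part $(1)$ I would proceed in two steps. First, existence: substitute the proposed values $a_{1}=n-c+1$, $a_{2}=g+c-n-1$, $a_{3}=n-g+1$ into (\ref{eq:11})--(\ref{eq:33}) and check that each holds identically; for instance $a_{1}+a_{2}=(n-c+1)+(g+c-n-1)=g$, and the other two are just as immediate. Second, uniqueness: observe that (\ref{eq:11})--(\ref{eq:33}) is a linear system in $(a_{1},a_{2},a_{3})$ whose coefficient matrix has rows $(1,1,0)$, $(0,1,1)$, $(1,1,1)$ and determinant $1\neq 0$, so the solution is unique. Equivalently, and perhaps cleaner to write, subtracting (\ref{eq:22}) from (\ref{eq:33}) forces $a_{1}=n+1-c$, subtracting (\ref{eq:11}) from (\ref{eq:33}) forces $a_{3}=n+1-g$, and then (\ref{eq:11}) yields $a_{2}=g+c-n-1$, recovering exactly the stated triple.

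Parts $(2)$, $(3)$ and $(4)$ are each a single equivalence obtained by inserting the explicit formulas and clearing a linear inequality. For $(2)$, $a_{1}\le a_{2}$ reads $n-c+1\le g+c-n-1$, i.e. $2n\le 2c+g-2$, i.e. $n\le c-1+g/2$. For $(3)$, $a_{2}\le a_{3}$ reads $g+c-n-1\le n-g+1$, i.e. $2n\ge 2g+c-2$, i.e. $n\ge g-1+c/2$. For $(4)$, $a_{2}\le 3a_{1}$ reads $g+c-n-1\le 3(n-c+1)$, i.e. $4n\ge 4c+g-4$, i.e. $n\ge c-1+g/4$. Each equivalence is reversible at every step, so both directions come for free.

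Finally, for part $(5)$ I would use the standing assumption $g\le c\le n$. Since $n\ge c$ we get $a_{1}=n-c+1\ge 1$. If in addition $a_{1}=1$, then $n=c$, whence $a_{2}=g+c-n-1=g-1$, and $g\ge 3$ gives $a_{2}\ge 2$. I do not expect a genuine obstacle anywhere; the only two points that need a little care are justifying the uniqueness assertion in $(1)$ (the nonsingular $3\times 3$ system, or the explicit back-substitution above) and respecting the conditional in $(5)$, where the bound $a_{2}\ge 2$ is claimed only under the extra hypothesis $a_{1}=1$ and relies on the girth constraint $g\ge 3$.
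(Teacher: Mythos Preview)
Your proof is correct and follows essentially the same approach as the paper: verifying (\ref{eq:11})--(\ref{eq:33}) by substitution, arguing uniqueness via non-singularity of the $3\times 3$ coefficient matrix, and checking the equivalences $(2)$--$(4)$ by direct algebraic manipulation. Your treatment of $(5)$ is slightly more explicit than the paper's (which simply asserts it ``follows directly''), but this is the same argument.
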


\begin{proof}

Equations (\ref{eq:11}), (\ref{eq:22}), (\ref{eq:33}), and (5) follow directly.

Consider the system of linear equations in (1).  Since the coefficient matrix is non-sigular,  $a_{1}$, $a_{2}$ and  $a_{3}$ are the only real numbers satisfying (\ref{eq:11}), (\ref{eq:22}), and (\ref{eq:33}).

The condition $n\leq c+g/2 -1$ is equivalent to
$$ 2n+2\leq g+2c \qquad \Leftrightarrow \qquad n-c+1 \leq g+c-n-1  \qquad \Leftrightarrow \qquad  a_{1}\leq a_{2}.$$

On the other hand, the condition  $n\geq g-1+c/2$  is equivalent to
$$ c+2g-2\leq 2n \qquad \Leftrightarrow  \qquad  g+c-n-1 \leq n-g+1    \qquad \Leftrightarrow \qquad  a_{2}\leq a_{3}.  $$

Finally, the condition $n\geq c-1 +g/4 $ is equivalent to
$$  a_{1}+a_{2}+a_{3}-1\geq a_{2} +a_{3} +(a_{1}+a_{2})/4 -1   \qquad \Leftrightarrow \qquad a_{2}\leq 3a_{1}.  $$ \end{proof}

We say that the triplet $(g,c,n)$ is \emph{v-admissible} if $\mathcal{G}(c,g,n)$ is not the empty set.

\begin{lem} \label{lem2}
The triplet $(g,c,n)$ is v-admissible if and only if we have either $g=c\leq n$ or $g<c$ and $n\geq g-1+c/2 $.
\end{lem}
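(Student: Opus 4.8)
The plan is to prove Lemma \ref{lem2} in two directions, characterizing exactly when $\mathcal{G}(g,c,n)$ is nonempty.

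First I would dispose of the case $g=c$. If $g=c$, the shortest and longest cycles have the same length, so any graph realizing this must have all cycles of length exactly $g=c$; the simplest example is the cycle graph $C_g$ itself, which has $g$ vertices, and since we require $g\le n$, I can attach a tree (pendant paths or extra vertices) to $C_g$ to reach any desired $n\ge g$ without creating longer or shorter cycles, since trees contribute no cycles. This shows $g=c\le n$ is v-admissible.

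For the case $g<c$, I would prove both implications. For \emph{necessity}, suppose $G\in\mathcal{G}(g,c,n)$ with $g<c$. The key constraint is that $G$ must contain both a cycle of length $g$ (the girth) and a cycle of length $c$ (the circumference), and these must coexist in a graph on $n$ vertices. The natural extremal configuration minimizing the vertex count is the graph $C_{a_1,a_2,a_3}$ of two vertices joined by three internally disjoint paths, whose two shortest paths form the shortest cycle and whose two longest form the longest cycle. Using Lemma \ref{000}, the values $a_1=n-c+1$, $a_2=g+c-n-1$, $a_3=n-g+1$ satisfy $a_1+a_2=g$ and $a_2+a_3=c$, and the total vertex count is $n$. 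For this to be a legitimate theta-type graph realizing girth $g$ and circumference $c$, I need $a_1\ge 1$ and the ordering $a_1\le a_2\le a_3$; by parts (2) and (3) of Lemma \ref{000}, the binding inequality is $a_2\le a_3 \Leftrightarrow n\ge g-1+c/2$, which gives the necessary bound. I would argue that any graph with girth $g$ and circumference $c>g$ needs enough vertices so that a $g$-cycle and a $c$-cycle can be accommodated, and the lower bound $n\ge g-1+c/2$ is precisely the threshold below which these two cycles cannot be realized simultaneously on $n$ vertices.

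For \emph{sufficiency}, assuming $g<c$ and $n\ge g-1+c/2$, I would exhibit an explicit graph. The theta graph $C_{a_1,a_2,a_3}$ with the values from Lemma \ref{000} has exactly $n$ vertices (by \eqref{eq:33}), girth $\min\{a_1+a_2, a_1+a_3\}=a_1+a_2=g$ and circumference $a_2+a_3=c$, provided $1\le a_1\le a_2\le a_3$. The conditions $a_1\ge 1$ and $a_2\le a_3$ hold by part (5) and part (3) of Lemma \ref{000}; if additionally $a_1\le a_2$ fails (i.e.\ $n>c-1+g/2$), I would instead use a variant where the excess vertices are placed on a pendant tree attached to a smaller theta graph, or verify directly that the three-path graph still has the correct girth and circumference since only the two smallest and two largest path-lengths matter. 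The main obstacle I anticipate is the bookkeeping at the boundary: confirming that when the inequalities among $a_1,a_2,a_3$ degenerate (equalities, or $a_1=1$ forcing $a_2\ge 2$), the graph $C_{a_1,a_2,a_3}$ genuinely has girth $g$ rather than a smaller value, and handling the surplus of vertices when $n$ substantially exceeds the minimum by gluing trees that add vertices without altering $g$ or $c$.
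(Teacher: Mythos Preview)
Your sufficiency argument and the case $g=c$ match the paper's approach. The real gap is in the necessity direction for $g<c$.

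You write that ``the natural extremal configuration minimizing the vertex count is the graph $C_{a_1,a_2,a_3}$'' and then read off the bound $n\ge g-1+c/2$ from Lemma~\ref{000}. But this is exactly what has to be \emph{proved}: you are asserting, not demonstrating, that no graph with girth $g$ and circumference $c$ can have fewer than $g-1+c/2$ vertices. Observing that the theta graph needs this many vertices only shows that the bound is tight for \emph{one particular family}; it says nothing about an arbitrary $G\in\mathcal{G}(g,c,n)$. Your closing sentence (``I would argue that any graph \ldots\ needs enough vertices'') is a restatement of the claim, not an argument.

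The paper supplies the missing step as follows. Fix a longest cycle $C_c$ in $G$. Either no path with both endpoints on $C_c$ leaves $C_c$, in which case the $g$-cycle shares at most one vertex with $C_c$ and $n\ge c+g-1$; or such a path $\eta$ exists, with endpoints $u,v$ on $C_c$ and $\eta\cap C_c=\{u,v\}$. Writing $b_0=L(\eta)$ and $b_1,b_2$ for the lengths of the two arcs of $C_c$ between $u$ and $v$, the girth forces $b_0+b_1\ge g$ and $b_0+b_2\ge g$, whence $2b_0\ge 2g-(b_1+b_2)=2g-c$, i.e.\ $b_0\ge g-c/2$. Since $\eta\cup C_c$ already accounts for $b_0+c-1$ vertices, $n\ge b_0+c-1\ge g+c/2-1$. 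This is the actual content of necessity, and your proposal does not contain it.

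A minor point on sufficiency: when $n>c-1+g/2$ the integers $a_1,a_2,a_3$ from Lemma~\ref{000} no longer satisfy $a_1\le a_2$ (and $a_2$ can even be nonpositive), so your alternative of ``verifying directly that the three-path graph still has the correct girth'' does not work; only the pendant-tree fix is viable, and that is precisely what the paper does (with $a_1=\lfloor g/2\rfloor$, $a_2=g-a_1$, $a_3=c-a_2$).
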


\begin{proof}
Assume that $(g,c,n)$ is v-admissible. If $g=c$, then there is nothing to prove.

Assume $g<c$ and consider any graph $G \in \mathcal{G}(c,g,n)$.  Denote by $C_{g}$ and $C_{c}$ two cycles in $G$ with lenghts $g$ and $c$, respectively. If there is no path $\eta$ joining two different vertices of  $C_{c}$ with $  \eta \not\subset C_{c}$, then $C_{c}\cap C_{g}$ contains at most a vertex and we conclude $n\geq g-1 +c>  g-1+c/2 $.

Assume now that such path $\eta$ exists. Without loss of generality we can assume that $\eta \cap C_{c}$ is exactly two vertices. Let $\{u,v\}=\eta \cap C_{c}$ and consider the two different paths $\eta_{1},\eta_{2}$ contained in $C_{c}$ and joining $u$ and $v$.

Define $b_{0}=L(\eta)$, $b_{1}=L(\eta_{1})$ and $b_{2}=L(\eta_{2})$. Thus, $b_{1}+b_{2}=c$, $b_{0}+b_{1}\geq g$ and $b_{0}+b_{2}\geq g$. Then $b_{0}\geq g-b_{1}$, $b_{0}\geq g-b_{2}$, $b_{0}\geq (2g-b_{1}-b_{2})/2=g-c/2$ and $n+1 \geq b_{0} +c \geq g-c/2 +c = g+c/2.$

Consider now positive integers  $g,c,n$  with either $g= c $ or $g<c$ and $n\geq g-1+c/2$.

If $g=c$, then let us define define $k:= n-g\geq 0$. Consider a graph $G_{0}$ isomorphic to the cycle graph $C_{g}$, and $k$ graphs $G_{i}$, $1\leq i \leq k$ isomorphic to the path graph $P_{2}$. Fix a vertex $v_{i}\in V(G_{i})$ for each $0\leq i \leq k$. Let $G$ be the graph obtained from $G_{0},G_{1},\dots, G_{k} $ by identifying $v_{0},v_{1},\dots, v_{k} $ in a single vertex. It is clear that $\vert V(G) \vert= g+k =n$, $g(G)=c(G)=c(G_{0})=g$. Thus, $G \in \mathcal{G}(g,g,n)$.

Consider now the case where $g<c$ and $n\geq g-1+c/2$.

Assume first that $g-1+c/2 \leq  n \leq c-1+g/2$.

Consider three natural numbers $a_{1}=n-c+1$, $a_{2}=g+c-n-1$ and  $a_{3}=n-g+1$. Lemma  \ref{000} gives   $a_{1}+a_{2}= g$, $a_{2}+a_{3}=  c$, $a_{1}+a_{2}+a_{3}= n+1$, and $  a_{1}\leq a_{2} \leq a_{3}$. Thus, $C_{a_{1},a_{2},a_{3}}\in \mathcal{G}(g,c,n)$ and $(g,c,n)$ is v-admissible.

Finally, assume that $n > c-1+g/2 $. Let us define $a_{1}=\lfloor g/2 \rfloor$, $a_{2}=g-a_{1}$, $a_{3}=c-a_{2}$, where $\lfloor t \rfloor$ denotes the lower integer part of $t$, i.e., the largest integer not greater than $t$.  Since $2a_{2}\leq g < c=a_{2}+a_{3}$, we have $a_{2}< a_{3}$; furthermore, $a_{1}\leq a_{2}$, and we can define $G_{0}:= C_{a_{1},a_{2},a_{3}} $. Note that $g(G_{0})=g$, $c(G_{0})=c$, and $\vert V(G_{0}) \vert= c+a_{1}-1$. Let $k:=n-(c-1+a_{1})> 0$ and consider  $k$ graphs $G_{i}$, $1\leq i \leq k$ isomorphic to the path graph $P_{2}$. Fix a vertex $v_{i}\in V(G_{i})$ for each $0\leq i \leq k$.  Let $G$ be the graph obtained from $G_{0},G_{1},\dots, G_{k} $ by identifying $v_{0},v_{1},\dots, v_{k} $ in a single vertex. It is clear that $\vert V(G) \vert=\vert V(G_{0}) \vert +k= c+a_{1}-1+n-(c-a_{1}-1)=n$, $c(G)=c(G_{0})=c$ and $g(G)=g(G_{0})=g$. Thus, $G \in \mathcal{G}(g,c,n)$ and $(g,c,n)$ is v-admissible.
\end{proof}

We say that a vertex $v$ in a graph $G$ is a \emph{cut-vertex} if $G\setminus \{v\}$ is not connected.
A graph is \emph{biconnected} if it does not contain cut-vertices.
Given a graph $G$, we say that a family of subgraphs $\{G_{s} \}_s$ of $G$ is a \emph{T-decomposition} of $G$ if $\cup_s G_{s} = G $ and $G_{s}\cap G_{r}$ is either a cut-vertex or the empty set for each $s\neq r$.
Every graph has a T-decomposition, as the following example shows.
Given any edge in $G$, let us consider the maximal two-connected
subgraph containing it: this is the well-known \emph{biconnected decomposition} of $G$.

The following result  appears in \cite [Theorem 3]{BRSV2}.
\begin{thm} \label{l:bermu}
  Let $ G $ be a graph and $\{G_{s}  \}  $ any T-decomposition of $G$. Then, $\delta(G) =\sup_{s} \delta (G_{s}) $.
 \end{thm}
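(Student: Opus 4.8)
The plan is to prove the two inequalities $\delta(G)\ge \sup_s\delta(G_s)$ and $\delta(G)\le\sup_s\delta(G_s)$ separately, the first being easy and the second carrying the real content.

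For $\delta(G)\ge\sup_s\delta(G_s)$ I would first check that each piece $G_s$ is \emph{isometrically} embedded in $G$, i.e.\ that any geodesic of $G$ joining two points of $G_s$ stays inside $G_s$. Indeed, if such a geodesic left $G_s$ it could only do so through a cut-vertex $v=G_s\cap G_r$, and it would have to re-enter $G_s$; since $v$ separates $G_s$ from the pieces attached to it, re-entry forces the geodesic to pass through $v$ a second time, producing a closed subloop and contradicting minimality of length. Consequently every geodesic triangle of $G_s$ is a geodesic triangle of $G$ with the same thin constant, so $\delta(G_s)\le\delta(G)$ for each $s$ and hence $\sup_s\delta(G_s)\le\delta(G)$ (this also covers the case $\delta(G_s)=\infty$).

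For $\delta(G)\le\sup_s\delta(G_s)$ in the hyperbolic case I would invoke Theorem~\ref{t:003}: there is a triangle $T\in\mathbb{T}_1$ with $\delta(T)=\delta(G)$, and by definition $T$ is a cycle. A cycle is $2$-connected, so it cannot be split among several pieces, since splitting would require a cut-vertex internal to the cycle, which a $2$-connected graph does not possess; whence $T$ lies in a single piece $G_s$. By the convexity established above, the three sides of $T$ are geodesics of $G_s$, so $T$ is a genuine geodesic triangle of $G_s$, and therefore $\delta(G)=\delta(T)\le\delta(G_s)\le\sup_s\delta(G_s)$.

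It remains to treat the non-hyperbolic situation, and this is the step I expect to be the main obstacle. If $\sup_s\delta(G_s)=\infty$ there is nothing to prove, so assume $\sup_s\delta(G_s)=\delta_0<\infty$; I must then show that every geodesic triangle $T=\{x,y,z\}$ of $G$ is $\delta_0$-thin, which requires controlling triangles that genuinely span several pieces. Here I would use that each $G_s$ is \emph{gated}: for every $q\in G$ there is a vertex $\pi_s(q)\in G_s$ lying on every geodesic from $q$ to $G_s$, with $d(q,r)=d(q,\pi_s(q))+d(\pi_s(q),r)$ for all $r\in G_s$; this rests on the block-cut tree structure of the decomposition. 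Given $p\in[xy]\cap G_s$, the gate additivity forces the gates $x'=\pi_s(x)$ and $y'=\pi_s(y)$ to lie on $[xy]$ with $p\in[x'y']\subset G_s$; forming the triangle $\{x',y',z'\}$ inside $G_s$ (with $z'=\pi_s(z)$) and using its $\delta_0$-thinness, a short case analysis, according to whether the gates $x',y',z'$ are distinct or two of them coincide, shows that the closest point of $[xz]\cup[yz]$ to $p$ is at distance at most $d_{G_s}(p,[x'z']\cup[y'z'])\le\delta_0$. Establishing the gating property and pushing this projection argument through the degenerate cases is the technical heart of the proof; the hyperbolic case, by contrast, is immediate from Theorem~\ref{t:003}.
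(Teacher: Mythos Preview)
The paper does not prove this theorem; it is quoted without argument from \cite[Theorem~3]{BRSV2}, so there is no in-paper proof to compare against.

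Your outline is correct. Two comments on the details. First, the assertion that a simple cycle must lie in a single piece is true, but ``a cycle is $2$-connected, so it cannot be split'' is too quick: what one actually uses is that every block of $G$ lies in one piece of any T-decomposition. If a block $B$ met two pieces, a cycle in $B$ through edges of both would produce two \emph{distinct} transition vertices, each a cut-vertex of $G$; one then checks that this forces some pair of pieces to share more than one vertex, contradicting the definition. This, together with the gated property, also yields the isometric embedding of each $G_s$ that you need.

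Second, routing the hyperbolic case through Theorem~\ref{t:003} is efficient but creates a dependency you should verify is not circular (that theorem is from \cite{BRS}, the present one from \cite{BRSV2}), and it forces the separate non-hyperbolic discussion. The gate argument you sketch last in fact handles \emph{all} triangles uniformly and is the cleaner main line: once you know $x',y'\in[xy]$ with $p\in[x'y']$, the key observation is that whenever $x'\ne z'$ (respectively $y'\ne z'$) the geodesic $[xz]$ (respectively $[yz]$) must cross $G_s$ and therefore contains a sub-geodesic $[x'z']$ (respectively $[y'z']$); thinness of the inner triangle $\{x',y',z'\}$ in $G_s$ then bounds $d_G(p,[xz]\cup[yz])$. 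The degenerate cases $z'=x'$ or $z'=y'$ reduce to a bigon in $G_s$ and are handled the same way. Making that the primary argument would let you drop the appeal to Theorem~\ref{t:003} altogether.
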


\begin{lem} \label{Y}
  If $(g,c,n)$ is a v-admissible triplet  and  $n'$ is an integer with $n'\geq n$, then $(g,c,n')$ is a v-admissible triplet and
  $$   A(g,c,n') \leq A(g,c,n) \leq  B(g,c,n) \leq B(g,c,n').  $$
 \end{lem}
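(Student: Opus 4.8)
The plan is to observe that the middle inequality $A(g,c,n)\le B(g,c,n)$ is immediate: since $(g,c,n)$ is v-admissible, $\mathcal{G}(g,c,n)$ is a nonempty finite set, so both the minimum and the maximum defining $A$ and $B$ are attained, and a minimum never exceeds a maximum over the same set (this is also part of Corollary \ref{newnew}). Hence the real content lies in the two outer inequalities together with the v-admissibility of $(g,c,n')$, and I would obtain all three at once from a single construction.

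The key construction is to enlarge a graph by attaching a tree, which changes neither the girth, the circumference, nor the hyperbolicity constant. Concretely, given $G\in\mathcal{G}(g,c,n)$, fix a vertex $v\in V(G)$ and let $G'$ be the graph obtained by attaching at $v$ a pendant path $P$ of length $n'-n$ (equivalently, $n'-n$ successive pendant edges; if $n'=n$ take $G'=G$). Then $|V(G')|=n'$. Since every vertex of $P\setminus\{v\}$ lies on no cycle of $G'$ (each newly added vertex carries a free end of degree $1$), the cycles of $G'$ are exactly the cycles of $G$; therefore $g(G')=g$ and $c(G')=c$, so $G'\in\mathcal{G}(g,c,n')$. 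Moreover $\{G,P\}$ is a T-decomposition of $G'$, because $G\cup P=G'$ and $G\cap P=\{v\}$ is a cut-vertex of $G'$. Since $P$ is a tree we have $\delta(P)=0$, so Theorem \ref{l:bermu} yields $\delta(G')=\max\{\delta(G),\delta(P)\}=\delta(G)$.

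With this construction in hand I would apply it to the extremal graphs. Choosing $G$ to be a graph in $\mathcal{G}(g,c,n)$ with $\delta(G)=A(g,c,n)$ (such a graph exists because $\mathcal{G}(g,c,n)$ is finite and nonempty), the resulting $G'\in\mathcal{G}(g,c,n')$ satisfies $\delta(G')=A(g,c,n)$; in particular $\mathcal{G}(g,c,n')\neq\emptyset$, so $(g,c,n')$ is v-admissible, and $A(g,c,n')\le\delta(G')=A(g,c,n)$. Choosing instead $G$ with $\delta(G)=B(g,c,n)$ and performing the same attachment gives $G'\in\mathcal{G}(g,c,n')$ with $\delta(G')=B(g,c,n)$, whence $B(g,c,n')\ge\delta(G')=B(g,c,n)$. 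Combining these with the trivial middle inequality completes the chain.

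The only delicate point, and the one I would be most careful about, is that $\delta$ is preserved \emph{exactly} rather than merely bounded: this is precisely what the T-decomposition identity of Theorem \ref{l:bermu} guarantees, using that trees have hyperbolicity constant $0$. Everything else is routine: the invariance of $g$ and $c$ is immediate since attaching pendant edges creates no new cycles, and the existence of extremal graphs follows from the finiteness of $\mathcal{G}(g,c,n)$.
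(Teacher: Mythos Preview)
Your proof is correct and follows essentially the same approach as the paper: attach a tree at a vertex of an extremal graph so that girth and circumference are preserved, and invoke Theorem~\ref{l:bermu} together with $\delta(\text{tree})=0$ to conclude that the hyperbolicity constant is unchanged. The only cosmetic differences are that the paper attaches $n'-n$ separate pendant edges at one vertex rather than a single pendant path, and it treats the case $g=c$ separately via Corollary~\ref{newnew} and cites Lemma~\ref{lem2} for v-admissibility, whereas your construction handles all cases uniformly and yields v-admissibility of $(g,c,n')$ as a direct byproduct.
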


\begin{proof}

Lemma \ref{lem2} gives that $(g,c,n')$ is a v-admissible triplet.

If $n'=n$, Corollary  \ref{newnew} gives the desired result. Thus, assume that $n'>n $.

It suffices to prove that for each  $G_{0} \in  \mathcal{G}(g,c,n)$, there exists $G \in  \mathcal{G}(g,c,n')$ with $  \delta (G)= \delta (G_{0})$.

If $g=c$,  then Corollary \ref{newnew} implies $ A(c,c,n') = A(c,c,n) =B(c,c,n)=B(c,c,n')= c/4$.

Assume now that $g<c$ and consider a  graph $G_{0} \in \mathcal{G}(g,c,n)$ and  graphs $G_{i}$ isomorphic to $P_{2}$, $1\leq i \leq n'-n$. Fix vertices $u_{i}  \in V(G_{i})$, for $0\leq i \leq n'-n$. Denote by $G$ the graph obtained from $G_{0},G_{1},\dots, G_{n'-n} $ by identifying $u_{0},u_{1},\dots, u_{n'-n} $ in a single vertex $v$. Since $v$ is a cut-vertex, the graphs $G_{i}$, $0\leq i \leq n'-n$ are a T-decomposition of $G$ and Theorem \ref{l:bermu} implies $\delta(G) =\delta (G_{0})$.
\end{proof}

\begin{thm}\label{extraT}
Let $(g,c,n)$ be a v-admissible triplet and $r$ a positive integer. Consider  graphs $G_{1},G_{2}\in \mathcal{G}(g,c,n)$ with $m_{1},m_{2}$ edges, respectively, and such that $\delta (G_{1})=A(g,c,n)$ and $\delta (G_{2})=B(g,c,n)$. Then
$$A(rg,rc,n_{1})\leq r A(g,c,n) \leq r B(g,c,n)\leq B(rg,rc,n_{2}),$$
for every $n_{1}\geq n+(r-1)m_{1}$ and $n_{2}\geq n+(r-1)m_{2}$.
\end{thm}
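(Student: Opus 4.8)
The plan is to manufacture extremal graphs for the triplet $(rg,rc,\cdot)$ by subdividing the extremal graphs for $(g,c,n)$, and then to spread the resulting bounds over the admissible range of vertex counts using the monotonicity already established in Lemma \ref{Y}. The middle inequality $rA(g,c,n)\le rB(g,c,n)$ is immediate from $A\le B$ (Corollary \ref{newnew}), so the real content is the two outer inequalities.

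The central construction is edge subdivision. Given a graph $H$ with all edges of length $1$, let $H^{(r)}$ denote the graph obtained by replacing each edge of $H$ by a path of length $r$, i.e.\ inserting $r-1$ new vertices in the interior of every edge. I would record three facts. First, $H^{(r)}$ is, as a geodesic metric space, isometric to $H$ with its metric multiplied by $r$: the point at parameter $t\in[0,1]$ of an edge of $H$ corresponds to the point at parameter $rt$ of the associated path in $H^{(r)}$, so every distance is scaled by $r$. Since geodesic triangles are carried to geodesic triangles and the sharp thinness constant of each is multiplied by $r$ under a rescaling of the metric, this yields $\delta(H^{(r)})=r\,\delta(H)$. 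Second, subdivision creates no new cycles and multiplies the length of every cycle by $r$, so $g(H^{(r)})=r\,g(H)$ and $c(H^{(r)})=r\,c(H)$. Third, if $H$ has $n$ vertices and $m$ edges, then $H^{(r)}$ has exactly $n+(r-1)m$ vertices, and it remains simple and connected.

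For the left inequality I would apply this to $G_1$. As $g(G_1)=g$, $c(G_1)=c$, $G_1$ has $n$ vertices and $m_1$ edges, the graph $G_1^{(r)}$ lies in $\mathcal{G}(rg,rc,n+(r-1)m_1)$ and satisfies $\delta(G_1^{(r)})=r\,\delta(G_1)=r\,A(g,c,n)$. Hence
$$A(rg,rc,n+(r-1)m_1)\le \delta(G_1^{(r)})=r\,A(g,c,n).$$
Since $(rg,rc,n+(r-1)m_1)$ is v-admissible (witnessed by $G_1^{(r)}$) and $A$ is non-increasing in the number of vertices by Lemma \ref{Y}, for every $n_1\ge n+(r-1)m_1$ we get
$$A(rg,rc,n_1)\le A(rg,rc,n+(r-1)m_1)\le r\,A(g,c,n).$$
The right inequality is symmetric: $G_2^{(r)}\in\mathcal{G}(rg,rc,n+(r-1)m_2)$ with $\delta(G_2^{(r)})=r\,B(g,c,n)$ gives $B(rg,rc,n+(r-1)m_2)\ge r\,B(g,c,n)$, and since $B$ is non-decreasing in the number of vertices by Lemma \ref{Y}, this extends to $B(rg,rc,n_2)\ge r\,B(g,c,n)$ for all $n_2\ge n+(r-1)m_2$. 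Chaining the three inequalities proves the claim.

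The only delicate point is the identity $\delta(H^{(r)})=r\,\delta(H)$; everything else is bookkeeping. I expect the main work to be the clean justification of this scaling behavior, namely verifying that subdivision is genuinely an isometry onto the $r$-scaled space (so that geodesics, and hence geodesic triangles, correspond exactly in both directions) and that the sharp thinness constant of each triangle is multiplied by $r$. Once this scaling fact is secured, the monotonicity of $A$ and $B$ in $n$ supplied by Lemma \ref{Y} closes the argument with no further case analysis.
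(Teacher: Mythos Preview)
Your proof is correct and follows essentially the same approach as the paper: construct $G_i^{(r)}$ by $r$-fold edge subdivision, use the scaling $\delta(G_i^{(r)})=r\,\delta(G_i)$ together with the obvious effect on girth, circumference, and vertex count, and then invoke Lemma~\ref{Y} for monotonicity in $n$. If anything, you justify the scaling identity $\delta(H^{(r)})=r\,\delta(H)$ more carefully than the paper, which simply asserts it.
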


\begin{proof}
Denote by $G_{1}^{(r)}$ the graph obtained from $G_{1}$ by replacing   each original edge with a path of  legth $r$. Thus, $\vert V(G_{1}^{(r)}) \vert = n+(r-1)m_{1}$, $g(G_{1}^{(r)})=rg$ and $c(G_{1}^{(r)})=rc$. It is clear that
$$A(rg,rc,n+(r-1)m_{1})\leq \delta (G_{1}^{(r)}) = r \delta (G_{1})= r A(g,c,n).$$

If $n_{1} \geq n+(r-1)m_{1}$, then Lemma \ref{Z} allows to conclude $A(rg,rc,n_{1})\leq A(rg,rc, n+(r-1)m_{1}) \leq r A(g,c,n)$.

Analogously, we have $B(rg,rc,n_{2})\geq r B(g,c,n)$.
\end{proof}

\begin{cor}\label{cextraT}
Let $(g,c,n)$ be a v-admissible triplet and $r$ a positive integer. Consider  graphs $G_{1},G_{2}\in \mathcal{G}(g,c,n)$ with $m_{1},m_{2}$ edges, respectively, and such that $\delta (G_{1})=A(g,c,n)=g/4$ and $\delta (G_{2})=B(g,c,n)=c/4$. Then
$A(rg,rc,n_{1})=rg/4$ for every $n_{1}\geq n+(r-1)m_{1}$ and $B(rg,rc,n_{2})=rc/4$  for every   $n_{2}\geq n+(r-1)m_{2}$.
\end{cor}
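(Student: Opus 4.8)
The plan is to combine the two-sided estimate of Theorem \ref{extraT} with the universal bounds of Corollary \ref{newnew}, specialized to the dilated triplets $(rg,rc,n_1)$ and $(rg,rc,n_2)$. The whole point is that the extra hypotheses $\delta(G_1)=A(g,c,n)=g/4$ and $\delta(G_2)=B(g,c,n)=c/4$ place the minimum and the maximum precisely at the endpoints of the range that Corollary \ref{newnew} allows; feeding these equalities into Theorem \ref{extraT} pins the dilated quantities down from one side, and Corollary \ref{newnew} pins them down from the other.

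First I would read off from Theorem \ref{extraT} the two inequalities $A(rg,rc,n_1)\le r\,A(g,c,n)=rg/4$, valid for every $n_1\ge n+(r-1)m_1$, and $B(rg,rc,n_2)\ge r\,B(g,c,n)=rc/4$, valid for every $n_2\ge n+(r-1)m_2$, where I have simply substituted the hypothesized values $A(g,c,n)=g/4$ and $B(g,c,n)=c/4$. These are exactly the nontrivial halves of the desired equalities.

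Next I would supply the reverse inequalities from Corollary \ref{newnew}. Before invoking it I must check that $A(rg,rc,n_1)$ and $B(rg,rc,n_2)$ are actually defined, i.e.\ that the relevant triplets are v-admissible; this is the one point deserving care, and it is already handled inside the proof of Theorem \ref{extraT}, which exhibits the edge-subdivided graph $G_1^{(r)}\in\mathcal{G}(rg,rc,n+(r-1)m_1)$ (and likewise $G_2^{(r)}$). Thus $(rg,rc,n+(r-1)m_1)$ and $(rg,rc,n+(r-1)m_2)$ are v-admissible, and Lemma \ref{Y} extends this to all $n_1\ge n+(r-1)m_1$ and $n_2\ge n+(r-1)m_2$. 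Applying Corollary \ref{newnew} to these triplets, whose girth is $rg$ and whose circumference is $rc$, gives $\frac{rg}{4}\le A(rg,rc,n_1)$ and $B(rg,rc,n_2)\le \frac{rc}{4}$.

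Finally I would combine the two pairs: $\frac{rg}{4}\le A(rg,rc,n_1)\le \frac{rg}{4}$ forces $A(rg,rc,n_1)=rg/4$, and $\frac{rc}{4}\le B(rg,rc,n_2)\le \frac{rc}{4}$ forces $B(rg,rc,n_2)=rc/4$, which is the claim. I do not expect any genuine obstacle here, since the argument is a direct composition of two previously established results; the only step requiring attention is the v-admissibility of the dilated triplets, and that follows for free from the explicit constructions already present in the proof of Theorem \ref{extraT} together with Lemma \ref{Y}.
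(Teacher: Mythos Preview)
Your proposal is correct and matches the paper's intended argument: the corollary is stated without proof immediately after Theorem \ref{extraT}, and the natural (and only reasonable) derivation is exactly what you outline, namely combining the inequalities from Theorem \ref{extraT} with the girth/circumference bounds of Corollary \ref{newnew}. Your extra remark on v-admissibility via the subdivided graphs and Lemma \ref{Y} is a sensible justification of a point the paper leaves implicit.
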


The argument in the proof of  Lemma \ref{000} gives the following result.

\begin{lem} \label{0002}
The integers  $a_{1}:=m-c$, $a_{2}:=g+c-m$ and  $a_{3}:=m-g$ are the only real numbers satisfying

$(1)$ the following equations:
 \begin{equation} \label{eq:11}
a_{1}+a_{2}=  g,
\end{equation}
\begin{equation} \label{eq:22}
a_{2}+a_{3}=  c,
\end{equation}
\begin{equation} \label{eq:33}
a_{1}+a_{2}+a_{3}=  m.
\end{equation}

$(2)$ $a_{1} \leq a_{2}$ $\Leftrightarrow$ $m\leq c+g/2$.

$(3)$ $a_{2} \leq a_{3}$ $\Leftrightarrow$ $m\geq g+c/2$.

$(4)$ $a_{2}\leq 3 a_{1}$  $\Leftrightarrow$ $m\geq c+ g/4$.

$(5)$ $a_{1}\geq1$ and $ a_{2}\geq 2$  if $a_{1}=1$.
\end{lem}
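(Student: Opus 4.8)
The plan is to exploit the remark made just before the statement, namely that Lemma \ref{0002} is the term-by-term analogue of Lemma \ref{000} under the substitution $n+1 \mapsto m$. Indeed, the quantities $a_{1} = m - c$, $a_{2} = g + c - m$, $a_{3} = m - g$ are obtained from the expressions $n - c + 1$, $g + c - n - 1$, $n - g + 1$ of Lemma \ref{000} by replacing $n+1$ with $m$, and the right-hand side $n+1$ of the third equation becomes $m$. So I would simply replay the proof of Lemma \ref{000} with this dictionary, checking that every algebraic step survives the substitution.

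First I would verify part $(1)$ by direct substitution: $a_{1} + a_{2} = (m-c)+(g+c-m) = g$, $a_{2} + a_{3} = (g+c-m)+(m-g) = c$, and $a_{1} + a_{2} + a_{3} = (m-c)+(g+c-m)+(m-g) = m$. For uniqueness I would observe that the coefficient matrix of the linear system in $(1)$ is exactly the one appearing in Lemma \ref{000}; since it is nonsingular (with determinant $1$), the system has a unique real solution, which is therefore $(a_{1},a_{2},a_{3})$.

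Next I would prove the equivalences $(2)$--$(4)$ by the same chains as in Lemma \ref{000}, now free of the $\pm 1$ shifts. For $(2)$: $m \le c + g/2 \Leftrightarrow 2m \le 2c + g \Leftrightarrow m - c \le g + c - m \Leftrightarrow a_{1} \le a_{2}$. For $(3)$: $m \ge g + c/2 \Leftrightarrow 2m \ge 2g + c \Leftrightarrow g + c - m \le m - g \Leftrightarrow a_{2} \le a_{3}$. For $(4)$ I would use the identities in $(1)$: $m \ge c + g/4 \Leftrightarrow a_{1} + a_{2} + a_{3} \ge (a_{2} + a_{3}) + (a_{1} + a_{2})/4 \Leftrightarrow 4a_{1} \ge a_{1} + a_{2} \Leftrightarrow a_{2} \le 3a_{1}$. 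Finally, for $(5)$: if $a_{1} = 1$ then $m = c + 1$, hence $a_{2} = g + c - m = g - 1 \ge 2$ since $g \ge 3$.

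The one step that is not purely mechanical is the claim $a_{1} \ge 1$ in $(5)$, which in Lemma \ref{000} was immediate from $n \ge c$; here it reads $m \ge c + 1$. This is the point I would treat most carefully. It holds in the regime in which the lemma is used (namely $g < c$): a graph that simultaneously realizes a shortest cycle of length $g$ and a longest cycle of length $c > g$ contains two independent cycles, so its cyclomatic number $m - n + 1$ is at least $2$, giving $m \ge n + 1 \ge c + 1$. Apart from this bit of bookkeeping, the argument is routine linear algebra and I anticipate no genuine difficulty.
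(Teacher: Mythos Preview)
Your proposal is correct and follows precisely the route the paper takes: the paper's proof consists of the single sentence ``The argument in the proof of Lemma \ref{000} gives the following result,'' and your replay of that argument under the substitution $n+1 \mapsto m$ is exactly what is intended. Your extra care with the claim $a_{1}\ge 1$ in part~(5) --- observing that it amounts to $m\ge c+1$ and justifying this via the cyclomatic number when $g<c$ --- is in fact more thorough than the paper, which leaves this point implicit.
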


We say that the triplet $(g,c,m)$ is \emph{e-admissible} if  $\mathcal{H}(c,g,m)$ is not the empty set.

The argument in the proof of  Lemma \ref{lem2}, using Lemma \ref{0002} instead of Lemma \ref{000}, gives  the following result.

\begin{lem} \label{lem2e}
The triplet $(g,c,m)$ is e-admissible if and only if we have either $g=c\leq m$ or $g<c$ and $m\geq g+c/2 $.
\end{lem}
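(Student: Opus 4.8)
The plan is to mirror the proof of Lemma \ref{lem2} exactly, substituting the edge-count bookkeeping for the vertex-count bookkeeping and invoking Lemma \ref{0002} in place of Lemma \ref{000}. The statement has two directions, and as in Lemma \ref{lem2} the case $g=c$ is essentially immediate, so the real content is the case $g<c$, where we must show the biconditional that e-admissibility is equivalent to $m\geq g+c/2$.

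For the forward direction, I would take any $G\in\mathcal{H}(g,c,m)$ with $g<c$ and extract a longest cycle $C_c$ of length $c$ and a shortest cycle $C_g$ of length $g$. Following the dichotomy in Lemma \ref{lem2}: if no path $\eta$ joins two distinct vertices of $C_c$ outside $C_c$, then $C_c$ and $C_g$ meet in at most a vertex, so the edge count already exceeds $c+g$ and hence $m> g+c/2$. Otherwise such an $\eta$ exists; reducing to the case $\eta\cap C_c=\{u,v\}$ and letting $b_0=L(\eta)$, $b_1,b_2$ be the two arcs of $C_c$ between $u$ and $v$, the shortest-cycle constraint gives $b_0+b_1\geq g$ and $b_0+b_2\geq g$, whence $b_0\geq g-c/2$. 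Since these $b_0+c$ edges are distinct, $m\geq b_0+c\geq g+c/2$, giving the inequality. This is the step I expect to require the most care, because the constant on the right-hand side ($g+c/2$ rather than the $g-1+c/2$ of Lemma \ref{lem2}) must fall out cleanly from counting edges instead of vertices; the $+1$ shifts visible in equation (\ref{eq:33}) of Lemma \ref{0002} versus Lemma \ref{000} are exactly what account for the discrepancy, and I would verify that the arithmetic closes without an off-by-one error.

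For the converse, given integers $g,c,m$ with $g<c$ and $m\geq g+c/2$, I would construct an explicit graph in $\mathcal{H}(g,c,m)$. When $g+c/2\leq m\leq c+g/2$, Lemma \ref{0002} yields natural numbers $a_1=m-c$, $a_2=g+c-m$, $a_3=m-g$ with $a_1\leq a_2\leq a_3$, $a_1+a_2=g$, $a_2+a_3=c$, so $C_{a_1,a_2,a_3}\in\mathcal{H}(g,c,m)$ directly (its edge count is $a_1+a_2+a_3=m$ by equation (\ref{eq:33})). When $m> c+g/2$, I would take a base graph $G_0=C_{a_1,a_2,a_3}$ realizing girth $g$ and circumference $c$ with $a_1=\lfloor g/2\rfloor$, $a_2=g-a_1$, $a_3=c-a_2$ (so $a_1\leq a_2<a_3$), and then attach $k=m-(a_1+a_2+a_3)$ pendant edges (copies of $P_2$) at a common vertex via a T-decomposition, as in the last paragraph of Lemma \ref{lem2}. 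Each appended $P_2$ raises the edge count by one without creating new cycles, so girth and circumference are preserved and the total becomes $m$. For $g=c$, the construction is a cycle $C_g$ with $m-g$ pendant edges attached.

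Since the statement itself asserts that the argument is ``the argument in the proof of Lemma \ref{lem2}, using Lemma \ref{0002} instead of Lemma \ref{000},'' the cleanest presentation is simply to point to that proof and indicate the single substitution, remarking only that the vertex-counting identity $a_1+a_2+a_3=n+1$ is replaced by the edge-counting identity $a_1+a_2+a_3=m$, which is precisely why the threshold changes from $g-1+c/2$ to $g+c/2$. The main obstacle is thus not conceptual but arithmetic: ensuring that every place where Lemma \ref{lem2} counted vertices (and picked up spurious $+1$'s or $-1$'s from shared vertices) is correctly translated into an edge count, so that the final inequality is $m\geq g+c/2$ with no stray constant.
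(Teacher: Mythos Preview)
Your proposal is correct and follows exactly the approach the paper indicates: it mirrors the proof of Lemma \ref{lem2}, replacing the vertex-count identity $a_1+a_2+a_3=n+1$ by the edge-count identity $a_1+a_2+a_3=m$ from Lemma \ref{0002}, which is precisely what shifts the threshold from $g-1+c/2$ to $g+c/2$. The paper gives no further detail beyond that remark, so your write-up is in fact more explicit than the original.
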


The arguments in the proofs of  Lemma \ref{Y} and  Theorem  \ref{extraT}, respectively, give the following results.

\begin{lem} \label{Ye}
  If $(g,c,m)$ is a e-admissible triplet  and  $m'$ is an integer with $m'\geq m$, then
  $$  \alpha(g,c,m') \leq \alpha(g,c,m) \leq  \beta(g,c,m) \leq \beta(g,c,m').  $$
 \end{lem}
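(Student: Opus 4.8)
The plan is to follow verbatim the argument of Lemma \ref{Y}, simply replacing the vertex count by the edge count throughout. First I would invoke Lemma \ref{lem2e} to see that e-admissibility is preserved under enlarging the edge count: since $(g,c,m)$ is e-admissible we have either $g=c\le m$ or $g<c$ with $m\ge g+c/2$, and either condition is clearly inherited by any $m'\ge m$, so $(g,c,m')$ is e-admissible. If $m'=m$ the chain of inequalities is immediate from Corollary \ref{newnew}, so I would assume $m'>m$. As in Lemma \ref{Y}, the whole statement then reduces to the single claim: for every $G_0\in\mathcal{H}(g,c,m)$ there exists $G\in\mathcal{H}(g,c,m')$ with $\delta(G)=\delta(G_0)$.

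To prove this claim I would split into the two usual cases. When $g=c$, Theorem \ref{t:1CRSV} forces $\delta(H)=c/4$ for every $H\in\mathcal{H}(c,c,\cdot)$, so Corollary \ref{newnew} gives $\alpha(c,c,m')=\alpha(c,c,m)=\beta(c,c,m)=\beta(c,c,m')=c/4$ directly. When $g<c$, I would take a copy of $G_0$ together with $m'-m$ copies $G_1,\dots,G_{m'-m}$ of the path graph $P_2$, fix an endpoint $u_i$ in each $G_i$ and a vertex $v$ of $G_0$, and identify $v,u_1,\dots,u_{m'-m}$ into a single vertex to form $G$. Each $P_2$ contributes exactly one new edge (a pendant edge), so $G$ has $m+(m'-m)=m'$ edges; and since a pendant edge creates no cycle, both the girth and the circumference are unchanged, whence $g(G)=g$ and $c(G)=c$, i.e. $G\in\mathcal{H}(g,c,m')$. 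The common vertex is a cut-vertex, so $\{G_0,G_1,\dots,G_{m'-m}\}$ is a T-decomposition of $G$; as every $P_2$ is a tree with $\delta(P_2)=0$, Theorem \ref{l:bermu} yields $\delta(G)=\sup_s\delta(G_s)=\delta(G_0)$, proving the claim.

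Applying the claim to a graph realizing $\alpha(g,c,m)$ produces a graph in $\mathcal{H}(g,c,m')$ of the same hyperbolicity constant, giving $\alpha(g,c,m')\le\alpha(g,c,m)$; applying it to a graph realizing $\beta(g,c,m)$ gives $\beta(g,c,m)\le\beta(g,c,m')$. The middle inequality $\alpha(g,c,m)\le\beta(g,c,m)$ is just Corollary \ref{newnew}, completing the chain. I do not expect any genuine obstacle here: the only point requiring a line of care is the bookkeeping that attaching $P_2$'s adds one edge each (not one vertex, as the role was in Lemma \ref{Y}) while leaving $g$ and $c$ untouched, which is exactly why the construction must use pendant edges rather than any cycle-creating gadget.
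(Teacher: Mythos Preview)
Your proposal is correct and matches the paper's approach exactly: the paper does not give a separate proof of Lemma \ref{Ye} but simply states that the argument of Lemma \ref{Y} carries over, which is precisely what you have written out in detail. The only extra care you took---checking that each attached $P_2$ contributes one edge rather than one vertex and leaves $g$ and $c$ unchanged---is the correct bookkeeping adjustment for the edge setting.
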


\begin{thm}\label{al1}
 If $(g,c,m)$ is an e-admissible triplet  and  $r$ is a positive integer, then
$$\alpha(rg,rc,rm)\leq r \alpha(g,c,m) \leq r \beta(g,c,m)\leq \beta(rg,rc,rm).$$
\end{thm}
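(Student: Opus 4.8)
The plan is to mimic the construction used in Theorem \ref{extraT}, namely the operation of replacing every edge of a graph by a path of length $r$, but now tracking edges rather than vertices. The key observation I would establish first is that if $G \in \mathcal{H}(g,c,m)$ and $G^{(r)}$ denotes the graph obtained from $G$ by subdividing each edge into a path of length $r$, then $G^{(r)} \in \mathcal{H}(rg,rc,rm)$: each of the $m$ edges becomes a path contributing $r$ edges, so $G^{(r)}$ has exactly $rm$ edges; every cycle of length $\ell$ in $G$ becomes a cycle of length $r\ell$ in $G^{(r)}$, so the girth scales to $rg$ and the circumference scales to $rc$. Crucially, subdivision is a scaling of the metric space by the factor $r$, hence $\delta(G^{(r)}) = r\,\delta(G)$.

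With this in hand, the proof follows the two-sided pattern of Theorem \ref{extraT}. For the lower bound on $\alpha$, I would take a graph $G_1 \in \mathcal{H}(g,c,m)$ with $\delta(G_1) = \alpha(g,c,m)$; then $G_1^{(r)} \in \mathcal{H}(rg,rc,rm)$ and so
$$\alpha(rg,rc,rm) \leq \delta(G_1^{(r)}) = r\,\delta(G_1) = r\,\alpha(g,c,m).$$
Symmetrically, taking $G_2 \in \mathcal{H}(g,c,m)$ with $\delta(G_2) = \beta(g,c,m)$ gives $G_2^{(r)} \in \mathcal{H}(rg,rc,rm)$ and hence
$$\beta(rg,rc,rm) \geq \delta(G_2^{(r)}) = r\,\delta(G_2) = r\,\beta(g,c,m).$$
The middle inequality $r\,\alpha(g,c,m) \leq r\,\beta(g,c,m)$ is immediate from Corollary \ref{newnew} (or simply from $\alpha \leq \beta$ by definition). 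Combining the three gives the chain of inequalities.

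The two points I would be most careful about are the verification that subdivision multiplies the hyperbolicity constant by exactly $r$ (this is the analogue of the unstated scaling fact used in Theorem \ref{extraT}; it holds because $G^{(r)}$ is isometric to $G$ with all distances multiplied by $r$, and $\delta$ scales linearly under metric rescaling) and the confirmation that the girth and circumference of $G^{(r)}$ are genuinely $rg$ and $rc$ — that is, that subdivision creates no new short cycles and destroys none. The latter is clear since there is a length-preserving-up-to-scaling bijection between cycles of $G$ and cycles of $G^{(r)}$. Unlike the vertex version in Theorem \ref{extraT}, no extra attachment of pendant paths is needed here because the edge count of $G^{(r)}$ lands exactly on $rm$, so the statement can be phrased cleanly at $(rg,rc,rm)$ without invoking the monotonicity Lemma \ref{Ye}; the main (and only mild) obstacle is stating the scaling identity $\delta(G^{(r)}) = r\,\delta(G)$ with enough justification.
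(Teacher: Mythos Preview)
Your proposal is correct and follows essentially the same approach as the paper: the paper simply states that the argument in the proof of Theorem~\ref{extraT} gives Theorem~\ref{al1}, and your subdivision construction $G \mapsto G^{(r)}$ together with the scaling identity $\delta(G^{(r)}) = r\,\delta(G)$ is exactly that argument, with the nice simplification you noted that the edge count lands exactly on $rm$ so Lemma~\ref{Ye} is not needed.
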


\begin{cor}\label{al2}
 Let $(g,c,m)$ be an   e-admissible triplet and $r$ a positive integer.  Consider  graphs $G_{1},G_{2}\in \mathcal{H}(g,c,m)$ such that $\delta (G_{1})=\alpha(g,c,m)=g/4$ and $\delta (G_{2})=\beta(g,c,m)=c/4$. Then, $\alpha(rg,rc,rm)=rg/4$ and $\beta(rg,rc,rm)=rc/4$.
\end{cor}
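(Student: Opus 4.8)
The plan is to derive Corollary~\ref{al2} as an immediate consequence of Theorem~\ref{al1}, exactly as Corollary~\ref{cextraT} follows from Theorem~\ref{extraT}. The whole point is that Theorem~\ref{al1} gives chains of inequalities that, under the hypotheses on $\delta(G_1)$ and $\delta(G_2)$, pinch down to equalities once we combine them with the universal bounds of Corollary~\ref{newnew}. So the proof is a short sandwiching argument with no new construction required.

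First I would treat the lower-bound chain. By hypothesis $\alpha(g,c,m)=g/4$, so Theorem~\ref{al1} gives
\begin{equation} \label{eq:chain-alpha}
\alpha(rg,rc,rm)\leq r\,\alpha(g,c,m)=\frac{rg}{4}.
\end{equation}
On the other hand, Corollary~\ref{newnew} applied to the triplet $(rg,rc,rm)$ yields the reverse inequality $\alpha(rg,rc,rm)\geq (rg)/4$, since the girth of the relevant graphs is $rg$. Combining these two gives $\alpha(rg,rc,rm)=rg/4$. The symmetric argument handles $\beta$: from $\beta(g,c,m)=c/4$ and Theorem~\ref{al1} we get $\beta(rg,rc,rm)\geq r\,\beta(g,c,m)=rc/4$, while Corollary~\ref{newnew} gives the upper bound $\beta(rg,rc,rm)\leq (rc)/4$, and the two together force $\beta(rg,rc,rm)=rc/4$.

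One point I would verify carefully is that the triplet $(rg,rc,rm)$ is actually e-admissible, so that $\alpha(rg,rc,rm)$ and $\beta(rg,rc,rm)$ are well defined and Corollary~\ref{newnew} applies; this follows from Lemma~\ref{lem2e} applied to $(rg,rc,rm)$ together with the e-admissibility of $(g,c,m)$ (if $g=c$ then $rg=rc$ and $rm\geq rg$; if $g<c$ and $m\geq g+c/2$ then $rm\geq rg+rc/2$). I do not anticipate a genuine obstacle here—this corollary is purely a matter of reading off the two-sided bounds—but the one place to stay alert is making sure the direction of each inequality in Theorem~\ref{al1} is paired with the correct side of the $g/4 \leq \delta \leq c/4$ bound from Corollary~\ref{newnew}, so that the $\alpha$ statement uses the lower bound $g/4$ and the $\beta$ statement uses the upper bound $c/4$.
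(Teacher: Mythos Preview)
Your proof is correct and follows exactly the approach the paper intends: Corollary~\ref{al2} is stated without proof as an immediate consequence of Theorem~\ref{al1} together with the universal bounds in Corollary~\ref{newnew}, just as Corollary~\ref{cextraT} follows from Theorem~\ref{extraT}. Your additional check that $(rg,rc,rm)$ is e-admissible via Lemma~\ref{lem2e} is a nice point of rigor that the paper leaves implicit.
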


\section{Bounds for $A(g,c,n)$}

We will need the following results.

The next theorem is a well-known fact (see, e.g.,   \cite[Theorem 8]{RSVV} for a proof).

\begin{thm}\label{main}
Let $G$ be any graph. Then
$$ \delta(G)\leq \frac{1}{2}\diam (G) .$$
\end{thm}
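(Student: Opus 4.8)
The plan is to prove the bound $\delta(G) \leq \frac{1}{2}\diam(G)$ directly from the definition of the hyperbolicity constant together with the structural result of Theorem \ref{t:003}. By Theorem \ref{t:003}, since $\delta(G)$ is a supremum over thin constants of geodesic triangles, and when $G$ is hyperbolic (which, for a finite or well-behaved graph, is the relevant case) there exists a geodesic triangle $T = \{x,y,z\} \in \mathbb{T}_1$ realizing $\delta(G) = \delta(T)$. Unwinding the definition of $\delta(T)$, there is a point $p \in [xy]$ (say on one of the sides) with
$$\delta(G) = \delta(T) = d_G\bigl(p, [xz] \cup [yz]\bigr).$$

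The key observation is that the two vertices $x$ and $y$ of the triangle are themselves endpoints of the side $[xz] \cup [yz]$ (indeed $x \in [xz]$ and $y \in [yz]$), so the distance from $p$ to the opposite two sides is bounded by the distance from $p$ to the nearer of $x$ and $y$. Since $p$ lies on the geodesic segment $[xy]$, we have $d_G(p,x) + d_G(p,y) = d_G(x,y) = L([xy])$, whence
$$d_G\bigl(p, \{x,y\}\bigr) = \min\{d_G(p,x), d_G(p,y)\} \leq \tfrac{1}{2} d_G(x,y).$$
Combining these gives $\delta(G) \leq \tfrac{1}{2} d_G(x,y) \leq \tfrac{1}{2}\diam(G)$, since $d_G(x,y)$ is at most the diameter. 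First I would record the reduction via Theorem \ref{t:003} to a single extremal triangle, then carry out the midpoint/triangle-inequality estimate on $[xy]$, and finally bound $d_G(x,y)$ by the diameter.

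The only real subtlety — and thus the main obstacle to a fully rigorous argument — is the reliance on Theorem \ref{t:003}, which assumes $G$ is hyperbolic, i.e. that $\delta(G) < \infty$ and that a maximizing triangle exists in $\mathbb{T}_1$. For a general (possibly infinite) graph one would need to argue either that the statement is vacuous when $\diam(G) = \infty$, or else run the estimate over an approximating sequence of triangles $T_n$ with $\delta(T_n) \to \delta(G)$, applying the same midpoint bound to each and passing to the limit. In the setting of this paper, where the graphs under consideration have finite circumference and girth and the diameter bound is used alongside Theorem \ref{t:1CRSV}, this technical point causes no difficulty, so I would present the argument through the realizing triangle of Theorem \ref{t:003} and remark that the infinite case follows by an approximation argument on geodesic triangles.
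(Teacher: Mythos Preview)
The paper does not actually prove this theorem: it is stated as a well-known fact with a citation to \cite[Theorem 8]{RSVV}, so there is no in-paper proof to compare against.

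Your argument is correct, but the detour through Theorem~\ref{t:003} is unnecessary and is the sole source of the ``subtlety'' you flag at the end. The chain of inequalities
\[
d_G\bigl(p,[xz]\cup[yz]\bigr)\;\le\; d_G\bigl(p,\{x,y\}\bigr)\;\le\;\tfrac{1}{2}\,d_G(x,y)\;\le\;\tfrac{1}{2}\diam(G)
\]
holds for \emph{every} geodesic triangle $T=\{x,y,z\}$ in $G$ and every point $p$ on any side of $T$. Hence $\delta(T)\le\tfrac{1}{2}\diam(G)$ for all $T$, and taking the supremum over $T$ gives $\delta(G)\le\tfrac{1}{2}\diam(G)$ directly. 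No realizing triangle, no hypothesis that $G$ be hyperbolic, and no approximation argument are needed; the inequality is immediate from the definition of $\delta(G)$ as a supremum. This is presumably the argument in \cite{RSVV}, and it is also exactly the estimate the present paper uses locally (e.g.\ in Case~(2.2) of the proof of Proposition~\ref{general}).
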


\begin{defn}
Given a graph $G$ and its  biconnected decomposition $\{G_{n} \}$, we define the \emph{effective diameter} as
$$
\diameff V(G):= \sup_n \diam V(G_{n}), \qquad \diameff (G):= \sup_n
\diam (G_{n}).
$$
\end{defn}

Theorems \ref{l:bermu} and \ref{main} have the following consequence.

\begin{lem}\label{corollaryB}
Let $G$ be any graph. Then
$$ \delta(G)\leq \frac{1}{2}\diameff (G) .$$
\end{lem}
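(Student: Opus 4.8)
The plan is to combine the biconnected decomposition with the two theorems that were just established. Recall that the biconnected decomposition $\{G_n\}$ of $G$ is a particular $T$-decomposition, since any two distinct biconnected blocks meet in at most a cut-vertex. Thus Theorem \ref{l:bermu} applies directly and yields $\delta(G)=\sup_n \delta(G_n)$.

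Next I would bound each block using Theorem \ref{main}. For every block $G_n$ in the biconnected decomposition we have
$$
\delta(G_n)\leq \frac{1}{2}\diam(G_n).
$$
Taking the supremum over $n$ on both sides and using the definition of the effective diameter, $\diameff(G)=\sup_n \diam(G_n)$, gives
$$
\sup_n \delta(G_n)\leq \frac{1}{2}\sup_n \diam(G_n)=\frac{1}{2}\diameff(G).
$$

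Combining the two displays yields
$$
\delta(G)=\sup_n \delta(G_n)\leq \frac{1}{2}\diameff(G),
$$
which is exactly the claimed inequality.

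I do not anticipate a genuine obstacle here, since every ingredient is already available: the only thing to verify carefully is that the biconnected decomposition is indeed a $T$-decomposition (so that Theorem \ref{l:bermu} is applicable), which is precisely the content of the remark preceding Theorem \ref{l:bermu}. The one point to be slightly careful about is the passage to the supremum when the decomposition is infinite, but since the bound $\delta(G_n)\leq \tfrac12\diam(G_n)\leq \tfrac12\diameff(G)$ is uniform in $n$, taking the supremum on the left is immediate and no compactness or finiteness assumption is needed.
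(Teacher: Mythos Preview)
Your proof is correct and is exactly the argument the paper has in mind: it presents Lemma \ref{corollaryB} simply as a consequence of Theorems \ref{l:bermu} and \ref{main}, without writing out further details. Your additional remarks about the infinite case and the applicability of Theorem \ref{l:bermu} to the biconnected decomposition are accurate and match the setup given in the paper.
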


The following result characterizes the graphs with hyperbolicity constant $1$
(see \cite[Theorem 3]{BeRoRoSi}).

\begin{thm}\label{t:diametro}
Let $G$ be any graph.
Then $\d (G) = 1$ if and only if $\diameff (G) = 2$.
\end{thm}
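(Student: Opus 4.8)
The plan is to concentrate all the difficulty in a lower bound for $\delta$ and to deduce the equivalence from two statements about a single biconnected graph $H$: \textbf{(a)} $\diam(H)\ge2\Rightarrow\delta(H)\ge1$, and \textbf{(b)} $\diam(H)>2\Rightarrow\delta(H)>1$. With $\{G_s\}$ the biconnected decomposition, Theorem \ref{l:bermu} gives $\delta(G)=\sup_s\delta(G_s)$ and, by definition, $\diameff(G)=\sup_s\diam(G_s)$. If $\diameff(G)=2$, then $\delta(G)\le\frac12\diameff(G)=1$ by Lemma \ref{corollaryB}, while (since block diameters lie in $\frac12\NN$) the supremum $2$ is attained by some block, so \textbf{(a)} yields $\delta(G)\ge1$ and hence $\delta(G)=1$. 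Conversely, if $\delta(G)=1$, then Lemma \ref{corollaryB} forces $\diameff(G)\ge2$; were $\diameff(G)>2$, some block would have $\diam(G_s)>2$ and \textbf{(b)} would give $\delta(G)\ge\delta(G_s)>1$, a contradiction, so $\diameff(G)=2$.

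For \textbf{(a)} I would build an explicit fat bigon. Pick $x,y\in H$ with $d(x,y)=2$, a geodesic $\gamma_1=[xy]$, and let $p$ be its midpoint, so $d(p,x)=d(p,y)=1$. The interior vertices of $\gamma_1$ adjacent to $p$ (or the single vertex $p$, if $p\in V(H)$) are strictly between $x$ and $y$. Biconnectedness puts $x,y$ on a common cycle, giving a second $x$--$y$ arc $\gamma_2$ meeting $\gamma_1$ only at $x,y$. The crucial local estimate is $d(p,\gamma_2)\ge1$: every point within distance $<1$ of $p$ lies on $\gamma_1$ or on an edge incident to an interior vertex of $\gamma_1$, and because unit edge lengths confine the radius-$1$ ball around $p$ to this immediate neighborhood, $\gamma_2$---which avoids those interior vertices---cannot enter it. Viewing $\{x,y\}$ with sides $\gamma_1,\gamma_2$ as a (degenerate) geodesic triangle then gives $\delta(H)\ge d(p,\gamma_2)\ge1$.

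The genuine obstacle is \textbf{(b)}, the strict bound $\delta(H)>1$ when $\diam(H)>2$ (equivalently, the bound $\diameff(G)\le2$ when $\delta(G)=1$). The naive move---a bigon between a farthest pair---fails, since when the diameter is large the waists of the two parallel geodesics can be joined by a short chord, collapsing the thin constant (as in $C_{1,c-1,c-1}$, where $\delta$ is far below $\frac12\diam$); so one cannot simply hope for $\delta\ge\frac12\diam$. Instead I would begin from Lemma \ref{Z}: as $\diam(H)>2$, there are $v\in V(H)$ and $e=[a,b]\in E(H)$ with $d(v,a),d(v,b)\ge2$. Using biconnectedness to find a cycle through $v$ and $e$, and cutting the geodesics $[va],[vb]$ at their last common vertex $w$, I would assemble a geodesic triangle whose base is long (not the single edge $e$, which alone only produces a thin triangle) and whose thin constant I can force to exceed $1$.

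The step I expect to be hardest is ruling out shortcuts in this last construction: one must select the cycle extremally---e.g.\ a shortest cycle realizing the chosen far pair---so that its arcs are actual geodesics and no chord deflates the thin constant, and then combine Theorem \ref{t:003} with the girth estimate $\delta(H)\ge g/4$ of Theorem \ref{t:1CRSV} to locate a triangle $T$ with $\delta(T)>1$. I would also treat separately the case where the extremal configuration consists of two edge-midpoints (as for $K_4$, which contains no isometric $4$-cycle yet has $\delta=1$): there the optimal triangle must have its vertices in $J(H)$ rather than at vertices of $H$, and Theorem \ref{t:003} is exactly what guarantees that restricting the search to such triangles loses nothing.
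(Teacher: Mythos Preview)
The paper does not prove Theorem~\ref{t:diametro}; it is quoted from \cite[Theorem~3]{BeRoRoSi} as an external result, with no argument given. So there is no in-paper proof to compare your proposal against, and what you are attempting is a reproof of a theorem the present paper merely cites.

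On the substance of your attempt: the reduction to a single biconnected block via Theorem~\ref{l:bermu} and Lemma~\ref{corollaryB} is correct, and the bound $\delta(G)\le\frac12\diameff(G)=1$ when $\diameff(G)=2$ is immediate. Both claims \textbf{(a)} and \textbf{(b)}, however, have gaps.

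In \textbf{(a)} you take the second arc $\gamma_2$ of a cycle through $x,y$ and treat $\{\gamma_1,\gamma_2\}$ as a degenerate geodesic triangle. But a bigon contributes to $\delta(H)$ only when \emph{both} sides are geodesics, and biconnectedness alone does not force $L(\gamma_2)=2$. If you instead place a third point $z$ on $\gamma_2$ and form a genuine triangle $\{x,y,z\}$, the sides $[xz],[zy]$ are geodesics in $H$ and need not stay on $\gamma_2$; they may pass through (or adjacent to) the interior vertex of $\gamma_1$, and then your local estimate $d(p,[xz]\cup[zy])\ge1$ no longer follows from the argument you give. What is actually needed is either the existence of two distinct geodesics of length~$2$ between some pair $x,y$, or a separate treatment of the midpoint--midpoint configuration; you have not established either.

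In \textbf{(b)} you explicitly present a plan rather than a proof, and you correctly flag the obstruction (chords collapsing the thin constant). Note that the fallback to Theorem~\ref{t:1CRSV} cannot close the gap: $g(H)/4>1$ would require $g(H)\ge5$, while a biconnected graph with $\diam(H)>2$ may perfectly well have girth $3$ or $4$. Producing a triangle with $\delta(T)\ge\frac54$ from the hypothesis $\diam(H)>2$ requires a genuine case analysis on the pair realizing a distance exceeding $2$ (using Lemma~\ref{Z} as you suggest), and that analysis---the heart of the matter---is not carried out here.
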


The following theorems appears in \cite [Theorem 7]{MRSV} and \cite [Theorem 2.6]{BRS}, respectively.

\begin{thm} \label{7j05}
Let $G$ be any graph. If there exists a
cycle $C$ in G with length $ L(C) \geq 4$, then
$$ \delta(G) \geq \frac{1}{4} \min{  \lbrace \sigma \text{ is a cycle in } G \text{ with } L(\sigma)\geq 4}  \rbrace.$$
\end{thm}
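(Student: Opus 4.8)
The plan is to reduce the whole statement to a single, explicit geodesic triangle. Put $\ell:=\min\{L(\sigma):\sigma\text{ a cycle of }G,\ L(\sigma)\ge 4\}$ and fix a cycle $C$ with $L(C)=\ell$. The crucial point is that such a shortest ``long'' cycle is a \emph{geodesic cycle}: $d_G(u,v)=d_C(u,v)$ for all vertices $u,v\in C$. Granting this, every sub-arc of $C$ of length at most $\ell/2$ is a geodesic of $G$, so a geodesic triangle drawn inside $C$ is a geodesic triangle of $G$, and I only have to compute the thinness of a convenient one.

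To see that $C$ is geodesic I argue by contradiction. If some vertex pair is not isometric, a standard trimming of a shortcut geodesic produces vertices $u,v\in C$ and a path $P$ with $P\cap C=\{u,v\}$, interior off $C$, and $L(P)=d_G(u,v)<d_C(u,v)$ (otherwise, replacing each maximal off-$C$ excursion of the shortcut by the corresponding arc of $C$ would give a walk inside $C$ shorter than $d_C(u,v)$, which is impossible). Let $a\le b$ denote the two arc lengths, $a+b=\ell$ and $d:=L(P)<a$; then $\sigma:=P\cup(\text{arc of length }b)$ is a cycle of length $d+b<\ell$. By minimality of $\ell$ this cycle cannot have length $\ge 4$, so $d+b\le 3$; since $b\ge\ell/2\ge 2$ and $d\ge 1$ this forces $\ell=4$, $a=b=2$, $d=1$, i.e.\ the two antipodal vertices of a $4$-cycle are joined by an edge. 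Hence $C$ is geodesic whenever $\ell\ge 5$, and the only possible failure is this one $\ell=4$ configuration.

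It remains to produce the triangle. Assume $C$ geodesic, let $x$ be a vertex of $C$, and let $y\in C$ be the point at arc length exactly $\ell/2$ from $x$ (a vertex when $\ell$ is even, an edge--midpoint when $\ell$ is odd); using vertex--isometry one checks $d_G(x,y)=\ell/2$. The two arcs of $C$ joining $x$ and $y$ are then geodesics of $G$ of length $\ell/2$, so they constitute a (degenerate) geodesic triangle $T=\{x,y,x\}$, and the midpoint $p$ of one arc satisfies $d_G(p,\,\text{opposite arc})=\ell/4$; therefore $\delta(G)\ge\delta(T)\ge\ell/4$. In the exceptional case ($\ell=4$ with an antipodal chord $[v_1,v_3]$ of a $4$-cycle $v_1v_2v_3v_4$) the cycle $C$ is not geodesic, and I would instead check $\delta(G)\ge 1=\ell/4$ by a direct computation: if $v_2,v_4$ are non-adjacent, the triangle $\{v_1,v_2,v_4\}$ with sides $[v_2,v_1]$, $[v_1,v_4]$ and the geodesic $v_2v_3v_4$ has its side $v_2v_3v_4$ bisected by $v_3$ at distance $1$ from the other two sides; if $v_2,v_4$ are adjacent, the four vertices span $K_4$ and a similar explicit triangle (or Theorem \ref{t:diametro}) gives the same bound. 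The delicate part of the whole argument is precisely the structural claim that $C$ is geodesic, together with the isolation and disposal of this single degenerate configuration.
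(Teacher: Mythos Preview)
The paper does not give its own proof of Theorem~\ref{7j05}; the result is quoted from \cite[Theorem~7]{MRSV}. Your argument therefore stands on its own, and it is essentially correct: the key structural fact---that a shortest cycle $C$ of length $\ell\ge4$ is isometrically embedded in $G$ except possibly in the single configuration $\ell=4$ with an antipodal chord---is proved cleanly, and once $C$ is isometric the antipodal bigon yields $\delta(G)\ge\ell/4$.

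Two places deserve a little more care. In the $K_4$ sub-case the triangle from sub-case~2a does not transfer directly, because $v_2v_3v_4$ is no longer a geodesic once $[v_2,v_4]\in E(G)$; and the appeal to Theorem~\ref{t:diametro} is misplaced, since that result characterises when $\delta(G)=1$ rather than giving a lower bound. A clean replacement is the bigon between the midpoints $p$ of $[v_1,v_2]$ and $q$ of $[v_3,v_4]$: one checks $d_G(p,q)=2$ regardless of the ambient graph (any path must leave $[v_1,v_2]$ through $v_1$ or $v_2$ and enter $[v_3,v_4]$ through $v_3$ or $v_4$), the paths $pv_1v_4q$ and $pv_2v_3q$ are geodesics, and the midpoint of $[v_1,v_4]$ lies at distance exactly $1$ from the second path. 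Second, you tacitly use that vertex-isometry of $C$ upgrades to isometry for \emph{all} points of $C$ (needed to locate the midpoint $p$ and to handle odd $\ell$); this is true---route any $G$-geodesic between non-vertex points of $C$ through the adjacent cycle vertices and apply the triangle inequality in $C$---but is worth stating explicitly.
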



\begin{thm} \label{omarpp}
For every graph $G$, $\delta(G)$  is a multiple of $1/4$.
\end{thm}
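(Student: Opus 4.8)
The plan is to reduce, via Theorem \ref{t:003}, to a single optimal geodesic triangle, and then to exhibit the relevant distance function as a minimum of unit-slope piecewise-linear functions whose breakpoints all sit at half-integer parameters; the value $1/4$ then falls out of an elementary computation. Since a non-hyperbolic graph has $\delta(G)=\infty$, I may assume $G$ is hyperbolic. By Theorem \ref{t:003} there is a geodesic triangle $T=\{x,y,z\}\in\mathbb{T}_1$ with $\delta(G)=\delta(T)$; in particular $x,y,z\in J(G)$, and there is a point $p$ on one side, say $p\in[xy]$, with $\delta(T)=d(p,[xz]\cup[yz])$. Writing $S=[xz]\cup[yz]$ and parametrizing $[xy]$ by arc length from $x$ as $\gamma$, I set $f(t)=d(\gamma(t),S)$, so that $\delta(G)=\max_t f(t)$.

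First I would record two structural facts. (i) The set $S$ is a finite union of edges and half-edges, so for every $w$ the distance $d(w,S)$ is attained at a point of the finite set $N:=(V(G)\cap S)\cup\{x,y,z\}$: on any edge or half-edge of $S$ the nearest point to an external $w$ is one of its endpoints, and every such endpoint is either a graph vertex or one of $x,y,z$. Since $x,y,z\in J(G)$, every point of $N$ lies in $J(G)$, and hence $f(t)=\min_{v\in N}g_v(t)$, where $g_v(t):=d(\gamma(t),v)$. (ii) For each fixed $v\in N$ the function $g_v$ is $1$-Lipschitz and piecewise linear with all slopes in $\{-1,+1\}$, and all of its breakpoints lie at parameters $t\in\tfrac12\ZZ$, with $g_v(t)\in\tfrac12\ZZ$ whenever $t\in\tfrac12\ZZ$. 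Indeed, inside an open edge of $\gamma$ with endpoints at parameters $t_L,t_R=t_L+1$ one has $g_v(t)=\min\big(d(v,\gamma(t_L))+(t-t_L),\,d(v,\gamma(t_R))+(t_R-t)\big)$, whose only possible corner is the crossing at $t^*=t_L+\tfrac12\big(d(v,\gamma(t_R))-d(v,\gamma(t_L))+1\big)$; as $d(v,\gamma(t_L)),d(v,\gamma(t_R))$ differ by an integer and $t_L\in\tfrac12\ZZ$, this $t^*$ and the corresponding value lie in $\tfrac12\ZZ$. The remaining breakpoints occur at the vertices of $\gamma$, which are at half-integer parameters with half-integer values. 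This half-integer structure is exactly what produces denominator $4$ in the end, and nothing finer.

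With (i) and (ii) in hand, $f=\min_{v\in N}g_v$ is itself unit-slope piecewise linear, and its maximum is attained at a peak where an ascending ($+1$) piece meets a descending ($-1$) piece. If that peak is one of the inherited breakpoints, it lies at a parameter in $\tfrac12\ZZ$ and there $f\in\tfrac12\ZZ\subseteq\tfrac14\ZZ$. Otherwise the peak is a crossing $g_u(t)=g_v(t)$ of an ascending piece of some $g_u$ and a descending piece of some $g_v$; writing these as $g_u(t)=g_u(t_1)+(t-t_1)$ and $g_v(t)=g_v(t_2)-(t-t_2)$ with $t_1,t_2,g_u(t_1),g_v(t_2)\in\tfrac12\ZZ$, the crossing value equals $\tfrac12\big(g_u(t_1)+g_v(t_2)+t_2-t_1\big)\in\tfrac14\ZZ$. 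In either case $\delta(G)=\max_t f(t)\in\tfrac14\ZZ$, which is the assertion.

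The main obstacle is fact (ii): establishing that every distance function $g_v$ is unit-slope piecewise linear with all breakpoints — in particular its intra-edge turning points — at half-integer parameters and half-integer values. Everything else is bookkeeping, but this is the step that simultaneously guarantees that $\delta(G)$ is a multiple of $1/4$ and rules out any finer denominator. Care is needed because the shortest path from $v$ to $\gamma(t)$ may switch sides at a vertex and a geodesic triangle need not be uniquely geodesic; the computation above handles this by treating each open edge of $\gamma$ separately and using only that distances between points of $J(G)$ belong to $\tfrac12\ZZ$.
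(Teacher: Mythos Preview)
The paper does not prove Theorem~\ref{omarpp}; it is quoted from \cite{BRS} (as is Theorem~\ref{t:003}), so there is no in-paper argument to compare against. Your proof is correct: reducing via Theorem~\ref{t:003} to a triangle $T\in\mathbb{T}_1$, writing $f(t)=d(\gamma(t),[xz]\cup[yz])=\min_{v\in N}g_v(t)$ with $N\subseteq J(G)\cap S$, and observing that each $g_v$ is unit-slope piecewise linear with breakpoints at parameters in $\tfrac12\ZZ$ and half-integer values there, so that the peak of $f$ lies in $\tfrac14\ZZ$, is exactly the right mechanism and is essentially how the result is obtained in the cited source.

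Two small points worth tightening. In fact~(ii) you explicitly treat only full edges of $\gamma$ (with $t_R=t_L+1$); when $x$ or $y$ is a midpoint of an edge $[a,b]$, the geodesic $\gamma$ begins or ends with a half-edge of length $\tfrac12$. On such a half-edge each $g_v$ is in fact affine with no interior corner, because $d(v,a)-d(v,b)\in\ZZ$ for the two ambient vertices $a,b$, which forces the crossing parameter into $\{-\tfrac12,0,\tfrac12\}$; so your claim survives, but this case should be stated. Second, since both Theorems~\ref{t:003} and~\ref{omarpp} are imported from \cite{BRS}, one should in principle verify that the proof of the former there does not already invoke the latter; within the logical structure of the present paper, however, your use of Theorem~\ref{t:003} is legitimate.
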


Let us start by computing $A(g,c,n)$ for $g=3$ and $g=4$.

\begin{thm} \label{calculo1}
  For any integers $3\leq c \leq n$ we have
  \begin{equation*}
  A(3,c,n)=\left\lbrace
  \begin{array}{l}
     3/4, \quad \text{if } c = 3, \\
     1,  \quad     \text{if } c>3.\\
  \end{array}
  \right.
\end{equation*}
 \end{thm}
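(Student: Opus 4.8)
The plan is to prove both cases by exhibiting minimizers and establishing matching lower bounds, using the results already available.

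\textbf{The case $c=3$.} Here $g=c=3$, so every graph in $\mathcal{G}(3,3,n)$ has both girth and circumference equal to $3$. By Corollary \ref{newnew} we immediately have $A(3,3,n)\geq g/4 = 3/4$. For the upper bound I would exhibit a single graph in $\mathcal{G}(3,3,n)$ achieving $\delta=3/4$. The triangle $C_3$ itself has $\delta(C_3)=3/4$ (this follows, e.g., from Corollary \ref{c:2RSSV} with $a_1=a_2=a_3=1$, giving $(1+\min\{1,3\})/4 = 2/4$, so I should instead note $C_3=K_3$ has $\delta=3/4$ by the standard computation, or derive it from Theorem \ref{t:1CRSV} which forces $3/4\leq\delta\leq 3/4$). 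To get $n$ vertices while keeping girth and circumference equal to $3$, attach $n-3$ pendant edges (copies of $P_2$) to one vertex of the triangle: by Theorem \ref{l:bermu} (T-decomposition) the hyperbolicity constant is the supremum over blocks, and the only block with a cycle is $C_3$, so $\delta=3/4$. Hence $A(3,3,n)=3/4$.

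\textbf{The case $c>3$.} The lower bound $A(3,c,n)\geq 1$ is the crux. Since $c>3$ there is a cycle of length $c\geq 4$ in $G$, so Theorem \ref{7j05} applies and gives $\delta(G)\geq \frac14\min\{L(\sigma): \sigma \text{ a cycle with }L(\sigma)\geq 4\}\geq \frac14\cdot 4 = 1$. By Theorem \ref{omarpp} the constant $\delta(G)$ is a multiple of $1/4$, but here we already have exactly $\delta(G)\geq 1$ for every such $G$, hence $A(3,c,n)\geq 1$. For the upper bound I must produce, for each admissible $(3,c,n)$, a graph with $\delta=1$; equivalently, by Theorem \ref{t:diametro}, a graph whose effective diameter is exactly $2$, girth $3$, and circumference $c$. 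The natural candidate is a ``friendship''-type or theta-type construction: take $C_c$ and add a chord structure so that the effective diameter of each block is $2$ while forcing a shortest cycle of length $3$. Concretely, one can take the wheel-like graph where $C_c$ has a vertex joined appropriately, or a graph built from triangles sharing structure so that $\diameff=2$. I would then verify girth $=3$, circumference $=c$, the right vertex count $n$ (padding with pendant $P_2$'s via Theorem \ref{l:bermu} as above, since pendants do not change $\diameff$ of the nontrivial block when that block already realizes the effective diameter), and apply Theorem \ref{t:diametro} to conclude $\delta=1$.

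\textbf{Main obstacle.} The delicate part is the explicit construction in the case $c>3$ that simultaneously pins down girth $3$, circumference exactly $c$, effective diameter exactly $2$, and arbitrary order $n\geq c$. Having $\diameff=2$ is a strong constraint (by Lemma \ref{Z} it means every vertex is within distance $1$ of every edge inside each block), so the biconnected block containing the $c$-cycle must be very ``dense'' and small in diameter, yet contain a cycle of length $c$ as its circumference and a triangle as its shortest cycle. I expect the cleanest route is to let the nontrivial block be $C_c$ together with a dominating apex vertex (a wheel $W_c$), check that its effective diameter is $2$, that its girth is $3$ and circumference is $c$, compute the order of $W_c$, and then attach pendant edges to reach general $n$. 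The verification that adding such pendants preserves $\diameff=2$ (so Theorem \ref{t:diametro} still yields $\delta=1$) and does not create longer cycles is the routine but necessary bookkeeping.
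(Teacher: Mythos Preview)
Your lower bounds are correct and match the paper. For $c=3$ you can in fact skip the explicit construction entirely: Corollary~\ref{newnew} already gives $g/4\le A(g,c,n)\le c/4$, and with $g=c=3$ this forces $A(3,3,n)=3/4$ outright. For $c>3$ the lower bound via Theorem~\ref{7j05} is exactly what the paper does.

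The gap is in your upper-bound construction for $c>3$. The graph you describe---$C_c$ together with a dominating apex vertex---does have girth $3$ and effective diameter $2$, but its circumference is $c+1$, not $c$: the cycle $v_1 v_2 \cdots v_c\, a\, v_1$ (where $a$ is the apex) is Hamiltonian of length $c+1$. So this graph does not lie in $\mathcal G(3,c,n)$. It also has $c+1$ vertices, so even with the circumference repaired you would miss the boundary case $n=c$.

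The paper's fix is to take the complete graph $K_c$ as the nontrivial block: it has girth $3$, circumference $c$ (a Hamiltonian cycle), exactly $c$ vertices, and $\delta(K_c)=1$ for $c\ge 4$. Pendant edges are then attached exactly as you propose, and Theorem~\ref{l:bermu} yields $\delta=1$ for the resulting graph in $\mathcal G(3,c,n)$. Your wheel idea can be salvaged by shrinking the rim to $C_{c-1}$ (so the wheel has $c$ vertices and a Hamiltonian cycle of length $c$), but $K_c$ is cleaner and makes the hyperbolicity value immediate without having to invoke Theorem~\ref{t:diametro}.
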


\begin{proof}
If $g=c=3$, Corollary \ref{newnew} gives $A(3,3,n)=3/4$.

If $g=3$, $c\geq 4$ and $G\in \mathcal{G}(3,c,n) $, then Theorem \ref{7j05} gives $\delta(G)\geq1$. Thus, $A(3,c,n)\geq 1$.

Let us consider the complete graph with $c$ vertices $K_{c}$, and $n-c$ graphs $G_{1}, \dots ,G_{n-c} $ isomorphic to the path graph $P_{2}$. Fix $v_{0}\in V(K_{c})$ and $v_{j} \in G_{j}$ for $ 1\leq j \leq n-c $. Let $G_{0}$ be the graph obtained from $K_{c},G_{1}, \dots ,G_{n-c} $ by identifying the vertices  $v_{0},v_{1}, \dots ,v_{n-c} $ in a single vertex $v$. Thus, $G_{0}\in \mathcal{G}(3,c,n)$. Since $v$ is a cut vertex of $G_{0}$, $ \lbrace K_{c},G_{1}, \dots ,G_{n-c} \rbrace $ is the biconnected decomposition of $G_{0}$. We have $\delta(G_{1})= \dots =\delta(G_{n-c})=0$, and Theorem \ref{l:bermu} gives $\delta(G_{0})=\delta(K_{c})=1$. Since $A(3,c,n)\leq \delta(G_{0})=1$, we conclude  $A(3,c,n)= 1$.
\end{proof}

\begin{thm} \label{calculo2}
  For every v-admissible triplet $(4,c,n)$,
  \begin{equation*}
  A(4,c,n)=\left\lbrace
  \begin{array}{l}
    1,   \quad \text{if } c  \text{ is } \text{even}, \\
     5/4,  \quad     \text{if } c \text{ is } \text{odd}.\\
  \end{array}
  \right.
\end{equation*}
 \end{thm}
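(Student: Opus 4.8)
The plan is to prove the theorem in two parts: first establishing the lower bound $A(4,c,n)\geq 1$ in all cases (and $A(4,c,n)\geq 5/4$ when $c$ is odd), and then constructing explicit graphs in $\mathcal{G}(4,c,n)$ that achieve these values. For the lower bound, note that any $G\in\mathcal{G}(4,c,n)$ contains a cycle of length $g=4\geq 4$, so Theorem \ref{7j05} immediately gives $\delta(G)\geq \frac14\cdot 4 = 1$, hence $A(4,c,n)\geq 1$. This handles the even case from below. For the odd case I would sharpen this: since $\delta(G)$ is always a multiple of $1/4$ by Theorem \ref{omarpp}, if I can rule out $\delta(G)=1$ for $c$ odd then the next admissible value is $5/4$. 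By Theorem \ref{t:diametro}, $\delta(G)=1$ forces $\diameff(G)=2$, and by Lemma \ref{Z} this means that in the relevant biconnected block every vertex is within distance $1$ of every edge. The key claim to extract is that a biconnected graph with $\diam\leq 2$ containing a cycle of odd length $c$ cannot simultaneously have girth $4$ — or more carefully, that the odd circumference $c$ is incompatible with $\diameff=2$ given girth $4$ — forcing $\delta(G)\geq 5/4$.

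For the upper bounds, the plan is to exhibit extremal graphs. By Lemma \ref{Y}, since $A(g,c,n)$ is nonincreasing in $n$, it suffices to build one graph for the smallest admissible $n$ and attach pendant copies of $P_2$ at a cut vertex, which by Theorem \ref{l:bermu} does not change $\delta$. When $c$ is even, I would take a cycle-like block of girth $4$ and circumference $c$ with small diameter: concretely a graph whose biconnected core has $\diameff=2$ (for instance built so that Lemma \ref{Z} applies), yielding $\delta\leq \frac12\diameff(G)=1$ via Lemma \ref{corollaryB}, matching the lower bound so that $A(4,c,n)=1$. When $c$ is odd, I would instead use the family $G_{A,B,B'}$ of Definition \ref{defn1}, or directly a graph $C_{a_1,a_2,a_3}$ from Corollary \ref{c:2RSSV}, choosing the path lengths so that girth $=4$, circumference $=c$, and the value $(a_3+\min\{a_2,3a_1\})/4$ comes out to $5/4$; then pad with $P_2$'s to reach the desired $n$.

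The natural candidate for the odd case is to realize girth $4$ via $a_1+a_2=4$ and circumference $c$ via $a_2+a_3=c$, then invoke Corollary \ref{c:2RSSV} to compute $\delta$ exactly. Taking $a_1=1,a_2=3$ forces girth $4$, but then $a_3=c-3$ and $\min\{a_2,3a_1\}=\min\{3,3\}=3$, giving $\delta=(c-3+3)/4=c/4$, which is too large. So I would instead combine such a theta-like piece with short connecting paths inside the $G_{A,B,B'}$ framework so that the $\max$ in Proposition \ref{general} or Corollary \ref{CX} evaluates to exactly $5/4$ while keeping both the global girth equal to $4$ and the global circumference equal to the odd number $c$; here the flexibility of having several cycles $C_j$ glued along unit-length $A_j$ segments is what lets me cap $\delta$ at $5/4$ rather than $c/4$.

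The hard part will be the sharp lower bound $\delta(G)\geq 5/4$ for odd $c$ — specifically proving rigorously that $\diameff(G)=2$ is impossible for a biconnected graph of girth $4$ and odd circumference $c$. The subtlety is parity: a graph with $\diam\leq 2$ and an odd cycle tends to force short chords that either create a triangle (dropping the girth below $4$) or a path realizing a smaller circumference, and one must argue that no configuration escapes this — i.e. that the presence of a longest cycle of \emph{odd} length genuinely obstructs the distance-$2$ condition of Lemma \ref{Z}. Matching the construction to this bound, choosing the $G_{A,B,B'}$ parameters so all three invariants ($g=4$, $c$ odd, $\delta=5/4$) hold simultaneously, is where the real care is needed; the even case, by contrast, should follow smoothly from Lemma \ref{corollaryB} and Lemma \ref{Z}.
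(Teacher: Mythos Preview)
Your lower-bound strategy is essentially the paper's: $A(4,c,n)\ge 1$ from Corollary~\ref{newnew} (or Theorem~\ref{7j05}), and for odd $c$ you rule out $\delta(G)=1$ via Theorem~\ref{t:diametro} and Lemma~\ref{Z}, then invoke Theorem~\ref{omarpp}. The paper carries out exactly this parity argument: fix a vertex $v$ on the $c$-cycle and show inductively that $[v,v_{2j}]\in E$ but $[v,v_{2j+1}]\notin E$ (else a triangle appears), which for odd $c$ contradicts $[v,v_c]\in E$. Your outline of this step is correct. Likewise, your even-case upper bound is the right idea in spirit; the paper's concrete graph is the complete bipartite graph $\Gamma_c=K_{c/2,c/2}$, in which any two edges lie in a common $4$-cycle, so $\diam(\Gamma_c)\le 2$ and $\delta(\Gamma_c)\le 1$.

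The genuine gap is your odd-case upper bound. You propose choosing parameters in $G_{A,B,B'}$ so that the maximum in Proposition~\ref{general} or Corollary~\ref{CX} equals $5/4$. This is impossible when $g=4$. Indeed, the end cycle $C_0$ has length $\beta_0+\alpha_1\ge g=4$, and condition~(\ref{eq:0.2}) forces $\alpha_1<\beta_0$, so $\alpha_1\ge 1$ and $\beta_0\ge 3$ (or $\alpha_1\ge 2$ and $\beta_0\ge 3$, etc.). The $j=0$ term in Proposition~\ref{general} is $(\beta_0+3\alpha_1)/4\ge (3+3)/4=3/2$, and in Corollary~\ref{CX} the term $\tfrac12+\tfrac14L(C_0)\ge \tfrac12+1=3/2$. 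Thus the bound you can extract from this family is never below $3/2$, and no choice of $A,B,B'$ will certify $\delta\le 5/4$. The chain-of-cycles graphs $G_{A,B,B'}$ are simply too sparse for this; you need a dense block with small diameter.

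The paper's construction for odd $c>5$ is instead to take $\Gamma_{c-1}$ (the bipartite graph above, with even circumference $c-1$) and replace one edge of its Hamiltonian cycle by a path of length $2$, obtaining $\Lambda_c\in\mathcal{G}(4,c,c)$. Any two points of $\Lambda_c$ then lie on a common cycle of length at most $5$, so $\diam(\Lambda_c)\le 5/2$ and $\delta(\Lambda_c)\le 5/4$ by Theorem~\ref{main}. For $c=5$ one just uses $A(4,5,n)\le A(4,5,6)\le c/4=5/4$ directly. This edge-subdivision trick on a bipartite extremal graph is the missing idea in your plan.
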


\begin{proof}
Corollary \ref{newnew} gives $A(4,c,n)\geq 1$.

Assume first that $c$ is even. Let $\Gamma_{c}$ be the graph defined by $V(\Gamma_{c})=\lbrace v_{1}, \dots  ,v_{c} \rbrace$ and $E(\Gamma_{c})= \lbrace  [v_{i},v_{j}] $ $ \vert $  $1\leq i,j \leq c$, $i+j$ is odd$\rbrace$. In particular, $[v_{1},v_{2}],       \dots   ,[v_{c-1},v_{c}],[v_{c},v_{1}] \in  E(\Gamma_{c})  $, $g(\Gamma_{c})=4$ and $c(\Gamma_{c})=c$. That is, we have $\Gamma_{c} \in  \mathcal{G}(4,c,c)$.

Let us prove that given two edges $e_{1},e_{2}\in E(\Gamma_{c})$, there exists a cycle $\sigma$ with $e_{1},e_{2}\subset \sigma$ and $L(\sigma)=4$. If $e_{1}=[v_{i_{1}},v_{i_{2}}]$ and $e_{2}=[v_{i_{2}},v_{i_{3}}]$, then $i_{1}+i_{2}$ and $i_{2}+i_{3}$ are odd, and thus, $i_{1}+i_{3}$ is even. Since $c\geq 4$, there exists $i_{4} \notin \lbrace i_{1},i_{2},i_{3} \rbrace$ such that $i_{2}+i_{4}$ is even. Hence, $i_{1}+i_{4}$ and $i_{3}+i_{4}$ are even, and the cycle $ [v_{i_{1}},v_{i_{2}}] \cup[v_{i_{2}},v_{i_{3}}] \cup [v_{i_{3}},v_{i_{4}}]  \cup [v_{i_{4}},v_{i_{1}}]$ contains $e_{1}$ and $e_{2}$.

Assume that $e_{1}=[v_{j_{1}},v_{j_{2}}]$ and $e_{2}=[v_{j_{3}},v_{j_{4}}]$, with $e_{1} \cap e_{2} = \emptyset$. Since $j_{3}+j_{4}$ is odd, we have that either   $j_{1}+j_{3}$ or $j_{1}+j_{4}$ is odd. By symmetry, we can assume that $j_{1}+j_{3}$ is odd. Thus, $j_{1}+j_{4}$ is even and $j_{2}+j_{4}$ is odd. Hence, $ [v_{j_{1}},v_{j_{2}}] \cup[v_{j_{2}},v_{j_{4}}] \cup [v_{j_{4}},v_{j_{3}}]  \cup [v_{j_{3}},v_{j_{1}}]$ is the required cycle.

Therefore, we conclude that $\diam V(\Gamma_{c})\leq 2$, since every two points in $\Gamma_{c}$ are contained in a cycle with length 4. Finally, Theorem \ref{main} gives $\delta (\Gamma_{c}) \leq 1$.

Thus, Lemma \ref{Y} gives $1\leq A(4,c,n) \leq A(4,c,c) \leq \delta(\Gamma_{c}) \leq 1$, and we deduce $A(4,c,n)= 1$.

Assume that $c$ is odd. Seeking for a contradiction, assume that $A(4,c,n)=1$, i.e., there exists $G\in \mathcal{G}(4,c,n) $ with $\delta(G)=1$. Let $C_{c}$ be a cycle in $G$ with $L(C_{c})=c$ and $G_{0}$ be the two-connected component of $G$ containing $C_{c}$. By Theorem \ref{t:diametro}, $\diam (G_{0})\leq 2$. Fix $v \in V(C_{c})$. By Lemma  \ref{Z}, we have $d_{G_{0}}(v,e)\leq 1$ for every $e \in E(G_{0})$.

Denote by $v,v_{2},\dots,v_{c} $ the vertices in $C_{c}$ such that $ [v,v_{2}], [v_{2},v_{3}],\dots,[v_{c},v] \subset C_{c}$. Since $d_{G_{0}}(v,e)\leq 1$ for every $e\in E(G_{0})$ and $g(G_{0})\geq g(G)=4$, we can prove inductively that $ [v,v_{2j}]\in E(G_{0})$ for every $1\leq j\leq (c-1)/2$ and $ [v,v_{2j+1}]\notin E(G_{0})$ for every $1\leq j\leq (c-1)/2$. In particular, if $j= (c-1)/2$, then $ [v,v_{c}]\notin E(G_{0})$, which is a contradiction. Hence, $A(4,c,n)>1$. By Theorem Theorem \ref{omarpp}, we have $A(4,c,n)\geq 5/4$.




Assume first that $c=5$. Then, Lemma \ref{lem2} gives that the tripet $(4,5,n)$ is v-admissible if and only if $n\geq 6$. Corollary \ref{newnew} and Lemma \ref{Y}
give $ 5/4\leq A(4,5,n)\leq A(4,5,6)\leq 5/4$. Thus, $A(4,5,n)= 5/4$ for every v-admissible triplet $(4,5,n)$.


Assume $c>5$. Consider the graph $\Gamma_{c-1}$  defined as before. Denote by $\Lambda_{c}$ the graph obtained from $\Gamma_{c-1}$ by replacing a fixed edge $e_{0} \in E(C_{c-1})$ by a path $\eta$ of length 2. Since $c\neq 5$, $\Lambda_{c}\in \mathcal{G}(4,c,c)$ and $(4,c,c)$ is v-admissible. The previous argument gives that any two points in  $\Lambda_{c}$ are contained in a cycle with length at most 5, and therefore, $ \diam (\Lambda_{c})\leq 5/2$.  Thus, Lemma \ref{main} implies $\delta (\Lambda_{c})\leq 5/4$. Lemma \ref{Y} gives $ 5/4\leq   A(4,c,n) \leq A(4,c,c)\leq \delta (\Lambda_{c})\leq 5/4 $, and we conclude $ A(4,c,n)= 5/4$.
\end{proof}

The next result provides good bounds for $A(g,c,n)$.

\begin{thm} \label{cotaA}
Let $(g,c,n)$ be a  v-admissible triplet with $g\geq 5$.

\begin{itemize}
  \item If  $2g-2  \leq c< 3g-4$ with $c=2g-2+s$ $(0\leq s \leq g-3)$,  then
$$ \frac{g}{4} \leq A(g,c,n)\leq   \dfrac{g+2+s}{4}   . $$

 \item If  $r$ is a positive integer,  $g$ is even  and   $c= 2g-2+r(g-2)$, then
  $$ \frac{g}{4} \leq A(g,c,n)\leq \dfrac{g+2}{4}  .$$

 \item If  $r$ is a positive integer, $g$ is odd  and   $2g-2+r(g-2) \leq  c \leq 2g-1+r(g-2)$, then
  $$ \frac{g}{4} \leq A(g,c,n)\leq \dfrac{g+3}{4}  .$$

\item If $r$ and $s$ are integers with  $r\geq1$, $ s \geq 0$, $g$ is even  and  $2g-2+r(g-2)< c \leq 2g-2+r(g-2)+2(r+1)(s+1)$, then

$$ \frac{g}{4} \leq A(g,c,n)\leq \dfrac{g+4+2s}{4} .$$

\item If $r$ and $s$ are integers with  $r\geq1$, $ s \geq 0$, $g$ is odd  and  $2g-1+r(g-2)< c \leq 2g-2+r(g-2)+2(r+1)(s+1)$, then

$$ \frac{g}{4} \leq A(g,c,n)\leq \dfrac{g+5+2s}{4} .$$


\end{itemize}


\end{thm}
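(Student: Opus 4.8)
The lower bound $g/4\le A(g,c,n)$ is immediate from Corollary \ref{newnew} in every case, so the whole content lies in the upper bounds. Since $c\ge 2g-2$ in all five cases, one checks from Lemma \ref{lem2} that $(g,c,c)$ is $v$-admissible and that the smallest admissible order is exactly $n=c$; hence by the monotonicity of Lemma \ref{Y} it suffices to exhibit, for each prescribed pair $(g,c)$, one graph $G\in\mathcal{G}(g,c,c)$ with the asserted value as an upper bound for $\delta(G)$, after which $A(g,c,n)\le A(g,c,c)\le\delta(G)$ for every admissible $n\ge c$. The graphs I would use all belong to the family of Definition \ref{defn1}: take a cycle of length $c$ and cut it into a chain of consecutive subcycles $C_0,\dots,C_k$ by inserting chords, i.e. paths $A_j$ of length $\alpha_j=1$. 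Because each such chord joins two vertices already lying on the $c$-cycle, no new vertices are created and $|V(G)|=c$ automatically, which is exactly what makes the order minimal. The hyperbolicity of such a graph is then controlled by Corollary \ref{CX}, whose bound splits into the two end contributions $\tfrac12+\tfrac14 L(C_0)$, $\tfrac12+\tfrac14 L(C_k)$ and the interior contributions $\tfrac12\bigl(2+\max\{\beta_j,\beta_j'\}\bigr)$.

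For the three baseline cases I would argue as follows. In the first case take $k=1$, i.e. the theta graph $C_{1,g-1,c-g+1}$; using $c\ge 2g-2$ and $g\ge5$ it has girth $g$ and circumference $c$, and Corollary \ref{c:2RSSV} gives $\delta=(c-g+1+\min\{g-1,3\})/4=(g+2+s)/4$ exactly. In the second and third cases I build a chain of $k=r+1$ subcycles, each of length $g$, as balanced as the parity of $g$ permits: $\beta_j=\beta_j'=(g-2)/2$ when $g$ is even, and $\beta_j=(g-1)/2$, $\beta_j'=(g-3)/2$ when $g$ is odd, the two end cells being closed off by arcs with $\beta_0=\beta_k=g-1$. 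The outer boundary then has length $2g-2+(k-1)(g-2)=2g-2+r(g-2)=c$, the girth is $g$, and Corollary \ref{CX} yields $\delta\le (g+2)/4$ (even) or $(g+3)/4$ (odd). The single extra value $c=2g-1+r(g-2)$ in the odd case is obtained by lengthening one end arc by $1$, which raises $L(C_0)$ to $g+1$ but leaves the bound at $(g+3)/4$.

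The remaining two cases I would handle by enlarging this chain. Starting from the balanced chain of $r+1$ subcycles, absorb the surplus $\Delta:=c-\bigl(2g-2+r(g-2)\bigr)$ by lengthening the sides $\beta_j,\beta_j'$ of the cells, subject to keeping the Corollary \ref{CX} bound of each enlarged cell at most the target. This forces $\max\{\beta_j,\beta_j'\}\le g/2+s$ in the even case and $\le (g+1)/2+s$ in the odd case, so each cell can absorb $2(s+1)$ units of circumference (exactly, when $g$ is even; at least this many when $g$ is odd). Crucially, to keep the girth equal to $g$ one must leave at least one interior cell at its baseline length $g$; since $r\ge1$ guarantees an interior cell, exactly $r+1$ of the $r+2$ cells remain free to grow, for a total capacity of $2(r+1)(s+1)$ — precisely the upper end of the stated range, and the reason the factor is $r+1$ rather than $r+2$. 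Distributing $\Delta$ one unit at a time over these cells realizes every integer $c$ in the range while keeping $\delta\le (g+4+2s)/4$ (even) or $(g+5+2s)/4$ (odd).

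The routine verifications I would still supply are that each constructed graph has girth exactly $g$ (lengthening only enlarges cycles, and one cell is pinned at $g$) and circumference exactly $c$ (in a chain of this type the longest cycle is the outer boundary and every partial cycle is strictly shorter), together with the inequalities \eqref{eq:0.1}--\eqref{eq:0.2} of Definition \ref{defn1}, which hold at once since $\alpha_j=1<\beta_{j}+\alpha_{j+1}+\beta_j'$ and $1<\beta_0=g-1$ once $g\ge5$. The only genuinely delicate point is the bookkeeping of the last two cases: confirming that the per-cell capacity $2(s+1)$, taken over exactly $r+1$ growable cells, both matches $2(r+1)(s+1)$ and lets every intermediate integer value of $c$ be attained. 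I expect this distribution argument, together with the even/odd split of the balanced baseline, to be the main obstacle; the hyperbolicity estimates themselves are direct applications of Corollaries \ref{CX} and \ref{c:2RSSV}.
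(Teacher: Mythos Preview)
Your proposal is correct and follows essentially the same route as the paper: reduce to $n=c$ via Lemma~\ref{Y}, build a chain graph $G_{A,B,B'}$ with all $\alpha_j=1$ on $c$ vertices, and control $\delta$ via Corollaries~\ref{c:2RSSV} and~\ref{CX}. The first three cases match the paper's construction exactly (theta graph $C_{1,g-1,c-g+1}$; balanced chain of cells of length $g$ with $\beta_0=\beta_k=g-1$, adjusting $\beta_k$ by one in the odd case).

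In the last two cases there is a small but inessential divergence. The paper keeps the \emph{end} cell $C_0$ at length $g$ (so $\beta_0=g-1$ throughout) and lets the $r$ interior cells together with the other end cell $C_k$ grow, each within a window of width two around $g/2-1+s$ (even) or around $(g-2)/2+s$ (odd); the accounting then shows $\sum(\beta_j+\beta_j')$ sweeps exactly the interval up to $2g-2+r(g-2)+2(r+1)(s+1)$. Your variant---pin one \emph{interior} cell at length $g$ and grow the remaining $r+1$ cells, including both ends---yields the same capacity $2(r+1)(s+1)$ and the same Corollary~\ref{CX} bound, so it works equally well. Your sentence ``one \emph{must} leave at least one interior cell at its baseline length~$g$'' is not literally true (pinning an end cell suffices, which is what the paper does), but this is only a wording slip; your construction and the resulting bound are valid as stated.
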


\begin{proof}

Case 1.

Assume that  $2g-2  \leq c< 3g-4$ with $c=2g-2+s$ $(0\leq s \leq g-3)$.

Consider the graph $C_{a_{1},a_{2},a_{3}}$  with  $a_{1}=1$, $a_{2} = g-1$ and $a_{3} =g-1+s$. Note that $g(C_{a_{1},a_{2},a_{3}})=a_{1}+a_{2}=g$, $c(C_{a_{1},a_{2},a_{3}})=a_{2}+a_{3}=2g-2+s=c$ and thus, $C_{a_{1},a_{2},a_{3}}\in \mathcal{G}(g,c,c) $. Since $ g\geq5 $, Corollary \ref{c:2RSSV} gives $\delta(C_{a_{1},a_{2},a_{3}}) =(a_{3}+ \min \{a_{2},3a_{1}\}) /4 = (g-1+s + \min \{g-1,3\}) /4 = (g+2+s)/4.$ Thus,  Corollary  \ref{newnew} and Lemma \ref{Y} imply $g/4 \leq A(g,c,n)\leq A(g,c,c)\leq \delta(C_{a_{1},a_{2},a_{3}})\leq  (g+2+s)/4.$
\smallskip

Case 2.

Assume that $2g-2+r(g-2) \leq  c \leq 2g-1+r(g-2)$, with $r\geq 1$.

Since $r\geq1$, it follows that $c\geq 2g-2+r(g-2)\geq 3g-4$.

Consider a graph $G_{A,B,B'}$ as in Definition $\ref{defn1}$, with $k=r+1$, $\beta_{0} =g-1$ and $\alpha_{j}=1$ for $1 \leq j \leq k$.
\smallskip

Case 2.1.

If $g$ is even and $c=2g-2+r(g-2)$, then let  $\beta_{k}= g-1 $ and  $\beta_{j}=\beta'_{j}=g/2-1$ for $1 \leq j \leq k-1$ in the previous graph $G_{A,B,B'}$. Thus,
$$  \sum_{j=0}^{k} ( \beta_{j}+\beta'_{j})  = g-1  +2r(g/2 -1 )+ g-1  = 2g-2+r(g -2 )     = c .$$

Note that $g(G_{A,B,B'})=L(C_{0})=\alpha_{1}+\beta_{0}=g$, $c(G_{A,B,B'})=\sum_{j=0}^{k} ( \beta_{j}+\beta'_{j}) =c $ and thus, $G_{A,B,B'}\in \mathcal{G}(g,c,c) $.

Since $L(C_{j})= g$ for $ 0\leq j \leq k $ and $2+ \max{ \lbrace \beta_{j},\beta'_{j} \rbrace} =1+ g/2$ for $ 0<j<k $, Corollaries \ref{newnew} and \ref{CX} and Lemma \ref{Y}  give
$$ \frac{g}{4} \leq    A(g,c,n) \leq    A(g,c,c)   \leq  \delta(G_{A,B,B'})\leq    \max{ \bigg\lbrace  \frac{2+g}{4},{ \dfrac{2 + g }{4}  } \bigg\rbrace} = \dfrac{2 + g }{4}.$$
\smallskip

Case 2.2.

Similarly, if $g$ is odd, consider a graph $G_{A,B,B'}$ as before with  $g-1 \leq \beta_{k} \leq g$,  $\beta_{j}= (g-1)/2$ and  $\beta'_{j}=(g-3)/2$ for $1 \leq j \leq k-1$. Since
$$  g-1+r((g-1)/2 + (g-3)/2 )+ g-1      \leq  \sum_{j=0}^{k} ( \beta_{j}+\beta'_{j})  \leq g-1  +r((g-1)/2 + (g-3)/2 )+ g  ,  $$
$$  2g-2+r(g -2 )     \leq  \sum_{j=0}^{k} ( \beta_{j}+\beta'_{j})  \leq 2g-1+r(g -2 ),  $$
we can choose $ \beta_{k} $ with the additional property $  \sum_{j=0}^{k} ( \beta_{j}+\beta'_{j}) =c $.

Note that $g(G_{A,B,B'})=L(C_{0})=\alpha_{1}+\beta_{0}=g$, $c(G_{A,B,B'})=\sum_{j=0}^{k} ( \beta_{j}+\beta'_{j}) =c $ and thus, $G_{A,B,B'}\in \mathcal{G}(g,c,c) $.

Since  $L(C_{j})\leq g+1$ for $ 0\leq j \leq k $ and $2+ \max{ \lbrace \beta_{j},\beta'_{j} \rbrace} =(g+3)/2 $ for $ 0<j<k $,  Corollaries \ref{newnew} and \ref{CX} and Lemma \ref{Y}  give
$$ \frac{g}{4} \leq    A(g,c,n) \leq    A(g,c,c)   \leq \delta(G_{A,B,B'})\leq    \max{ \bigg\lbrace  \frac{2+g+1}{4},{ \dfrac{3 + g }{4}  } \bigg\rbrace} = \dfrac{3 + g }{4}.$$
\smallskip

Case 3.

Assume now that $r$ and $s$ are integers with $r\geq1$, $s \geq 0$, $g$ is even and  $2g-2+r(g-2) < c \leq 2g-2+r(g-2)+2(r+1)(s+1)$.

Consider a graph $G_{A,B,B'}$ as in Definition $\ref{defn1}$, with $k=r+1$, $\beta_{0}=g-1$, $g-1+2s< \beta_{k} \leq g-1+2(s+1)$ and  $\alpha_{j}=1$ for $1 \leq j \leq k$.

If $g$ is even, let $ g/2-1+s \leq \beta_{j},\beta'_{j}\leq g/2+s$ for $0 < j < k$. Since
$$  g-1+2r(g/2 -1+s )+ g-1+2s      <  \sum_{j=0}^{k} ( \beta_{j}+\beta'_{j})  \leq g -1 +2r(g/2 +s )+ g-1+2(s+1)  ,  $$
$$ 2g-2+r(g -2 ) \leq 2g-2+r(g -2 )+ 2(r+1)s      < \sum_{j=0}^{k} ( \beta_{j}+\beta'_{j})  \leq 2g-2+r(g -2 )+ 2(r+1)(s +1)   ,  $$
we can choose $ \beta_{j},\beta'_{j} $ with the additional property $  \sum_{j=0}^{k} ( \beta_{j}+\beta'_{j}) =c $.

Note that $g(G_{A,B,B'})=L(C_{0})=\alpha_{1}+\beta_{0}=g$, $c(G_{A,B,B'})=\sum_{j=0}^{k} ( \beta_{j}+\beta'_{j}) =c $ and thus, $G_{A,B,B'}\in \mathcal{G}(g,c,c) $.
Since $L(C_{j})\leq g+2s+2$ for $ 0 \leq j \leq k $ and $2+ \max{ \lbrace \beta_{j},\beta'_{j} \rbrace} \leq (g+4)/2+s $ for $ 0<j<k $, Corollaries \ref{newnew} and \ref{CX} and Lemma \ref{Y}  give
$$ \frac{g}{4} \leq    A(g,c,n) \leq    A(g,c,c) \leq  \delta(G_{A,B,B'})\leq    \max{ \bigg\lbrace  \frac{4+g+2s}{4},{ \dfrac{4+g +2s }{4}  } \bigg\rbrace} = \dfrac{g+4 +2s }{4} .$$
\smallskip


Case 4.

Assume now that $r$ and $s$ are integers with $r\geq1$, $s \geq 0$, $g$ is odd and  $2g-1+r(g-2) < c \leq 2g-2+r(g-2)+2(r+1)(s+1)$.




Consider a graph $G_{A,B,B'}$ as in Definition $\ref{defn1}$, with $k=r+1$, $\beta_{0}=g-1$, $g+2s< \beta_{k} \leq g-1+2(s+1)$, $\alpha_{j}=1$ for $1 \leq j \leq k$ and $ (g-1)/2+s \leq \beta_{j} \leq(g+1)/2+s$, $(g-3)/2+s \leq \beta'_{j}\leq (g-1)/2+s $ for $0 < j < k$. Since
$$  g-1+r((g-1)/2+s+(g-3)/2+s )+ g+2s      <  \sum_{j=0}^{k} ( \beta_{j}+\beta'_{j})  \leq g -1 +r(  (g+1)/2+s   +(g-1)/2+s )+ g-1+2(s+1)  ,  $$
$$ 2g-1+r(g -2 )  < \sum_{j=0}^{k} ( \beta_{j}+\beta'_{j})  \leq 2g-2+r(g -2 )+ 2(r+1)(s +1)   ,  $$
we can choose $ \beta_{j},\beta'_{j} $ with the additional property $  \sum_{j=0}^{k} ( \beta_{j}+\beta'_{j}) =c $.

Note that $g(G_{A,B,B'})=L(C_{0})=\alpha_{1}+\beta_{0}=g$, $c(G_{A,B,B'})=\sum_{j=0}^{k} ( \beta_{j}+\beta'_{j}) =c $ and thus, $G_{A,B,B'}\in \mathcal{G}(g,c,c) $.
Since $L(C_{j})\leq g+2s+2$ for $ 0 \leq j \leq k $  and $2+ \max{ \lbrace \beta_{j},\beta'_{j} \rbrace} \leq (g+5)/2+s $ for $ 0<j<k $, Corollaries \ref{newnew} and \ref{CX} and Lemma \ref{Y}  give
$$ \frac{g}{4} \leq    A(g,c,n) \leq    A(g,c,c) \leq  \delta(G_{A,B,B'})\leq    \max{ \bigg\lbrace  \frac{4+g+2s}{4},{ \dfrac{5+g +2s }{4}  } \bigg\rbrace} = \dfrac{g+5 +2s }{4} .$$
\end{proof}

The following result shows that the lower bound $g/4 \leq A(g,c,n)$  is attained for infinitely many v-admissible triplets.

\begin{prop} \label{Xx}
For any positive integer $u$, we have  $A(4u,6u,n)= g/4$ for every $n\geq 9u-3$.
\end{prop}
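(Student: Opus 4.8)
The plan is to exhibit a single graph in $\mathcal{G}(4u,6u,9u-3)$ whose hyperbolicity constant equals $g/4=u$, and then propagate this value to all larger orders by monotonicity. Since Corollary \ref{newnew} already gives $A(4u,6u,n)\geq g/4=u$ for every v-admissible triplet, it suffices to produce one graph attaining the value $u$ and then to invoke Lemma \ref{Y}.

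The natural candidate is the $u$-subdivision $H:=K_{3,3}^{(u)}$ of the complete bipartite graph $K_{3,3}$, i.e. the graph obtained from $K_{3,3}$ by replacing each of its nine edges with a path of length $u$. First I would check that $H\in\mathcal{G}(4u,6u,9u-3)$: under subdivision every cycle of $K_{3,3}$ has its length multiplied by $u$, so from $g(K_{3,3})=4$ and $c(K_{3,3})=6$ one obtains $g(H)=4u$ and $c(H)=6u$; and counting vertices gives $|V(H)|=6+9(u-1)=9u-3$. One also notes that $H$ is biconnected, since subdivision preserves the $2$-connectivity of $K_{3,3}$, so its effective diameter coincides with its diameter.

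The key step is to show $\delta(H)=u$. Since $H$ is isometric to $K_{3,3}$ with all distances scaled by the factor $u$, both the diameter and the hyperbolicity constant scale by $u$, exactly as in the identity $\delta(G^{(r)})=r\,\delta(G)$ used in the proof of Theorem \ref{extraT}; hence $\delta(H)=u\,\delta(K_{3,3})$, and it remains to prove $\delta(K_{3,3})=1$. For the upper bound I would verify that $d(v,e)\leq 1$ for every vertex $v$ and every edge $e$ of $K_{3,3}$: an edge joins the two parts, so whichever part $v$ lies in, $e$ has an endpoint in the opposite part, which is adjacent to $v$ and thus at distance $1$. By Lemma \ref{Z} this yields $\diam(K_{3,3})\leq 2$, and Theorem \ref{main} gives $\delta(K_{3,3})\leq \tfrac12\diam(K_{3,3})\leq 1$; the matching lower bound $\delta(K_{3,3})\geq g(K_{3,3})/4=1$ is Corollary \ref{newnew}. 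Thus $\delta(K_{3,3})=1$ and $\delta(H)=u$. Alternatively one can bypass the scaling identity altogether and argue directly that $\diameff(H)=\diam(H)=2u$, so that Lemma \ref{corollaryB} gives $\delta(H)\leq u$.

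Finally I would assemble the pieces. Because $H\in\mathcal{G}(4u,6u,9u-3)$ with $\delta(H)=u$, we get $A(4u,6u,9u-3)\leq u$, and together with $A(4u,6u,9u-3)\geq u$ this yields $A(4u,6u,9u-3)=u=g/4$. The triplet $(4u,6u,9u-3)$ is v-admissible, since $9u-3\geq g-1+c/2=7u-1$ for $u\geq 1$, so Lemma \ref{Y} applies and gives, for every $n\geq 9u-3$, the chain $g/4\leq A(4u,6u,n)\leq A(4u,6u,9u-3)=g/4$, whence $A(4u,6u,n)=g/4$. The only genuine subtlety is the hyperbolicity computation for $K_{3,3}$ (equivalently, the diameter bound); everything else is a direct verification of girth, circumference and order, followed by an application of the quoted monotonicity and scaling results.
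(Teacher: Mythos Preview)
Your proposal is correct and follows essentially the same route as the paper: the paper's graph ($C_6$ together with the three long diagonals $[v_1,v_4],[v_2,v_5],[v_3,v_6]$) is precisely $K_{3,3}$, its hyperbolicity is shown to be $1$ via the diameter bound, and then Corollary~\ref{cextraT} is invoked, which packages in one step exactly the subdivision/scaling identity $\delta(G^{(u)})=u\,\delta(G)$ together with the monotonicity of Lemma~\ref{Y} that you spell out separately. The only cosmetic slip is that the lower bound $\delta(K_{3,3})\ge g(K_{3,3})/4=1$ is Theorem~\ref{t:1CRSV} rather than Corollary~\ref{newnew}.
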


\begin{proof}
Consider a cycle graph $C_{6}$ with vertices $v_{1}, \dots ,v_{6}$ and the graph $G$ with $V(G)=V(C_{6})$ and $E(G)=E(C_{6})\cup \lbrace  [v_{1},v_{4}  ],   [v_{2},v_{5}  ] ,  [v_{3},v_{6}  ]  \rbrace$. Thus,   $G\in \mathcal{G}(4,6,6) $. One can check that $\diam (G)=1$, and Theorem  \ref{main} gives $\delta (G)\leq 1$. Hence,  Corollary \ref{newnew} implies  $ 1\leq A(4,6,6) \leq \delta (G)\leq 1$ and $A(4,6,6)=1$. Since $G$ has 9 edges, Corollary \ref{extraT} gives $A(4u,6u,n)= g/4$ for every $n\geq 6+(u-1
)9 = 9u-3$.
\end{proof}

We give now some bounds for $A(g,c,m)$ which do not depend on $r$ and $s$.

\begin{thm} \label{cotaA1}
Let $(g,c,n)$ be a v-admissible triplet with $g\geq5$.

\begin{itemize}
  \item If $c<3g-4$, then
$$\dfrac{g}{4} \leq A(g,c,n)\leq \dfrac{2g-1}{4}.$$

\item If $c=3g-4$, then
$$\dfrac{g}{4} \leq A(g,c,n)\leq \dfrac{g+2}{4}      \qquad \text{ if } g \text{ is } \text{even}, $$
$$\dfrac{g}{4} \leq  A(g,c,n)\leq \dfrac{g+3}{4}    \qquad   \text{ if } g \text{ is } \text{odd}. $$

  \item If $c>3g-4$, then
$$\dfrac{g}{4} \leq A(g,c,n)\leq \dfrac{3g+5}{8}      \qquad \text{ if } g \text{ is } \text{even}, $$
$$\dfrac{g}{4} \leq  A(g,c,n)\leq \dfrac{3g+7}{8}    \qquad   \text{ if } g \text{ is } \text{odd}. $$
\end{itemize}


\end{thm}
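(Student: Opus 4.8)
The plan is to derive Theorem \ref{cotaA1} from Theorem \ref{cotaA} by absorbing the auxiliary parameters $r$ and $s$ into bounds depending on $g$ alone, supplemented by the trivial estimate $\delta\le c/4$ for very small circumferences. The lower bound $g/4\le A(g,c,n)$ holds in every case by Corollary \ref{newnew}, so all the content lies in the upper bounds.

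For the range $c<3g-4$ I would split on the size of $c$. If $c\le 2g-3$, then Corollary \ref{newnew} already gives $A(g,c,n)\le c/4\le(2g-3)/4\le(2g-1)/4$, with nothing else to prove. If instead $2g-2\le c<3g-4$, I apply the first bullet of Theorem \ref{cotaA}: writing $c=2g-2+s$, the strict inequality $c<3g-4$ forces $s\le g-3$, and since $(g+2+s)/4$ is increasing in $s$ we obtain $A(g,c,n)\le(g+2+(g-3))/4=(2g-1)/4$. These two subcases exhaust $c<3g-4$. For the boundary value $c=3g-4=2g-2+1\cdot(g-2)$ I apply Theorem \ref{cotaA} with $r=1$: the second bullet ($g$ even) yields $(g+2)/4$, and the third bullet ($g$ odd, with $c$ equal to the lower endpoint $2g-2+r(g-2)$ of the admissible interval) yields $(g+3)/4$, exactly the claimed values.

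The substantive case is $c>3g-4$. Given such a $c$, let $r\ge1$ be the largest integer with $2g-2+r(g-2)\le c$; here $r\ge1$ because $c>3g-4=2g-2+(g-2)$. Write $c=2g-2+r(g-2)+t$ with $0\le t\le g-3$. The decisive point is that $r\ge1$ gives $2(r+1)\ge4$, hence $t/(2(r+1))\le(g-3)/4$. If $g$ is even and $t=0$, the second bullet gives $(g+2)/4\le(3g+5)/8$; if $g$ is even and $t\ge1$, I take the least $s$ with $2(r+1)(s+1)\ge t$, so the fourth bullet applies and $s+1=\lceil t/(2(r+1))\rceil\le\lceil(g-3)/4\rceil\le g/4$, whence $s\le(g-4)/4\le(g-3)/4$ and therefore $(g+4+2s)/4\le(3g+5)/8$. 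When $g$ is odd the same scheme uses the third bullet for $t\in\{0,1\}$ (bound $(g+3)/4\le(3g+7)/8$) and the fifth bullet for $t\ge2$, with the identical estimate $s\le(g-3)/4$ yielding $(g+5+2s)/4\le(3g+7)/8$.

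The only delicate step is this last case: one must choose $r$ as large as possible so that the interval width $2(r+1)(s+1)$ grows and forces the excess parameter $s$ to stay small, and then check that the minimal feasible $s$ never exceeds $(g-3)/4$, which is precisely the threshold making $(g+4+2s)/4\le(3g+5)/8$ and $(g+5+2s)/4\le(3g+7)/8$. The clean inequality $t/(2(r+1))\le(g-3)/4$, valid because $t\le g-3$ and $r\ge1$, together with $\lceil(g-3)/4\rceil\le g/4$, is what makes the argument collapse uniformly in $g$ with no need for a case analysis modulo $4$.
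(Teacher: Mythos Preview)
Your proof is correct and follows essentially the same approach as the paper: reduce everything to Theorem \ref{cotaA}, and in the range $c>3g-4$ choose $r$ so that $c$ lies in an interval of width $g-2$ above $2g-2+r(g-2)$, then pick the minimal admissible $s$ and use $r\ge1$ together with $t\le g-3$ to force $s\le(g-3)/4$. The only cosmetic difference is that the paper takes $r=\lceil(c-2g+2)/(g-2)\rceil-1$ (ensuring the strict lower inequality in bullets four and five automatically) and thus never separates out your $t=0$ and $t\in\{0,1\}$ subcases, but the resulting estimate $s\le(g-3)/4$ and the final bounds are identical.
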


\begin{proof}
By Corollary \ref{newnew}, it suffices to prove the upper bounds.

Case 1.

If $c<2g-2$, then Corollary \ref{newnew} gives the result. If $2g-2 \leq c < 3g-4  $, then Theorem \ref{cotaA} gives $A(g,c,n) \leq (g+2+s)/4 \leq (2g-1)/4$.
\smallskip

Case 2.

If $c=3g-4$, then Theorem \ref{cotaA} with $r=1$ gives the inequalities.
\smallskip

Case 3.

Consider the case $c>3g-4$.

Let us define $ r:=\big\lceil \frac{c-2g+2}{g-2} \big\rceil -1 $. Therefore,

$$\frac{c-2g+2}{g-2} -1 \leq r <  \frac{c-2g+2}{g-2} , $$
$$ 2g-2+r(g-2)< c \leq 2g-2+ (r+1)(g-2).   $$

Since $c> 3g-4$, we have  $c-2g+2> g-2$  and  $r\geq1$.

Define now $ s:=\big\lceil \frac{c-2g+2-r(g-2)}{2(r+1)} \big\rceil -1 $. Thus,

$$\frac{c-2g+2-r(g-2)}{2(r+1)}-1 \leq s <  \frac{c-2g+2-r(g-2)}{2(r+1)}, $$
$$ 2g-2+r(g-2)+2(r+1)s < c \leq 2g-2+r(g-2)+2(r+1)(s+1) .   $$

Since $c> 2g-2+r(g-2)$, we have $s\geq0$.

Since $  2g-2+r(g-2)+2(r+1)s < c \leq 2g-2+ (r+1)(g-2)$, we have $2(r+1)s \leq g-3$ and thus, $2s \leq (g-3)/(r+1) \leq (g-3)/2$.

If $g$ is even, then Theorem  \ref{cotaA} gives
$$A(g,c,n)\leq \dfrac{g+4+2s}{4} \leq \dfrac{g+4+\frac{g-3}{2}}{4} \leq \dfrac{3g+5}{8}.$$

If $g$ is odd, then Theorem  \ref{cotaA} gives
$$A(g,c,n)\leq \dfrac{g+5+2s}{4} \leq \dfrac{g+5+\frac{g-3}{2}}{4} \leq \dfrac{3g+7}{8}.$$

\end{proof}

We can improve the bounds in Theorem \ref{cotaA1} when $c$ is large enough.

\begin{thm} \label{cotaA2}
Let $(g,c,n)$ be a v-admissible triplet with $g\geq 5$ and $2c \geq g^{2}-2g+4$.

\begin{itemize}
  \item If $g$ is even, then
$$\dfrac{g}{4} \leq A(g,c,n)\leq \dfrac{g+4}{4}.$$

\item If $g$ is odd, then

$$\dfrac{g}{4} \leq A(g,c,n)\leq \dfrac{g+5}{4}.$$
\end{itemize}

\end{thm}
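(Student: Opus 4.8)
The plan is to derive Theorem \ref{cotaA2} as a direct consequence of Theorem \ref{cotaA}, exactly in the spirit of the proof of Theorem \ref{cotaA1}, but exploiting the extra hypothesis $2c \geq g^2 - 2g + 4$ to kill the dependence on $s$. By Corollary \ref{newnew} it again suffices to establish the upper bounds, so I would focus entirely on bounding $A(g,c,n)$ from above. The key observation is that when $c$ is large enough we may take $r$ as large as possible so that the ``number of cycles'' absorbs all the excess length, forcing the slack parameter $s$ to be $0$.

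First I would define, as in Case 3 of Theorem \ref{cotaA1}, the integer $r := \lceil (c-2g+2)/(g-2)\rceil - 1$, so that $2g-2+r(g-2) < c \leq 2g-2+(r+1)(g-2)$, and then $s := \lceil (c-2g+2-r(g-2))/(2(r+1))\rceil - 1$, giving $s \geq 0$ and
$$ 2g-2+r(g-2)+2(r+1)s < c \leq 2g-2+r(g-2)+2(r+1)(s+1). $$
Just as before, combining $c \leq 2g-2+(r+1)(g-2)$ with the lower bound on $c$ yields $2(r+1)s \leq g-3$. The heart of the argument is to show that the hypothesis $2c \geq g^2-2g+4$ forces $s=0$. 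The idea is that this hypothesis guarantees $r$ is large, specifically $r+1 \geq (g-2)/2$ roughly, because $c$ being of order at least $g^2/2$ means $(c-2g+2)/(g-2)$ is of order at least $g/2$. Then from $2(r+1)s \leq g-3$ and $r+1 \geq (g-1)/2$ (the precise threshold I would verify from $2c \geq g^2-2g+4$) one gets $s \leq (g-3)/(g-1) < 1$, hence $s=0$ since $s$ is a non-negative integer.

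The main obstacle, and the step requiring the most care, is verifying the chain of inequalities that turns $2c \geq g^2-2g+4$ into a lower bound on $r+1$ sharp enough to give $2(r+1) > g-3$, i.e. $2(r+1) \geq g-2$. Concretely: from $r \geq (c-2g+2)/(g-2) - 1$ I have $r+1 \geq (c-2g+2)/(g-2)$, and I must check that $(c-2g+2)/(g-2) \geq (g-2)/2$, which rearranges to $2c - 4g + 4 \geq (g-2)^2 = g^2-4g+4$, i.e. $2c \geq g^2$. Since the hypothesis is only $2c \geq g^2-2g+4$, I would instead aim for the weaker but sufficient bound $2(r+1) > g-3$; working through $2(r+1)(g-2) \geq 2(c-2g+2)$ and comparing with $g-3$ via the hypothesis should yield $2(r+1) \geq g-2 > g-3$, so that $2(r+1)s \leq g-3 < 2(r+1)$ forces $s < 1$, hence $s=0$. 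This boundary computation is delicate and must be done carefully with the floor/ceiling estimates.

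Once $s=0$ is established, the conclusion is immediate: Theorem \ref{cotaA}, Case 4 (for $g$ odd) and Case 3 (for $g$ even) give
$$A(g,c,n) \leq \frac{g+4+2s}{4} = \frac{g+4}{4} \quad (g \text{ even}), \qquad A(g,c,n) \leq \frac{g+5+2s}{4} = \frac{g+5}{4} \quad (g \text{ odd}),$$
which are exactly the asserted bounds. I would also double check that the hypothesis $2c \geq g^2-2g+4$ places $c$ in the regime $c > 2g-2+r(g-2)$ covered by Cases 3 and 4 (rather than the exact-equality Case 2), ensuring those cases of Theorem \ref{cotaA} indeed apply; this follows since $s \geq 0$ together with the strict lower inequality on $c$ guarantees $c$ lies strictly above the threshold handled by Case 2.
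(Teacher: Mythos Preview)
Your overall strategy---reduce to Theorem \ref{cotaA} with $s=0$ by showing the hypothesis forces $r$ large enough---is exactly the paper's approach. But the execution has a genuine gap in the ceiling/floor step, precisely the ``delicate'' point you flag.

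With your choice $r=\lceil (c-2g+2)/(g-2)\rceil-1$ you only get the \emph{non-strict} bound $r+1\ge (c-2g+2)/(g-2)$, and combining this with the hypothesis $(c-2g+2)/(g-2)\ge (g-4)/2$ yields only $2(r+1)\ge g-4$, not $2(r+1)\ge g-2$ as you claim. (Your own computation shows $2(r+1)\ge g-2$ would need $2c\ge g^2$; the sentence that follows, asserting the hypothesis ``should yield $2(r+1)\ge g-2$'', contradicts this.) The failure is real: take $g$ even and $2c=g^2-2g+4$ exactly. Then $(c-2g+2)/(g-2)=(g-4)/2$ is an integer, your $r=(g-6)/2$, $2(r+1)=g-4$, and a direct computation gives $c-(2g-2+r(g-2))=g-2$, so your $s=\lceil(g-2)/(g-4)\rceil-1=1$ for every even $g\ge 6$. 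Your bound then reads $(g+6)/4$, not $(g+4)/4$.

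The fix, which is what the paper does, is to take $r=\lfloor(c-2g+2)/(g-2)\rfloor$ instead. Then either $c=2g-2+r(g-2)$, which is handled directly by the second item of Theorem \ref{cotaA} (giving the even better bounds $(g+2)/4$ or $(g+3)/4$), or $c>2g-2+r(g-2)$ strictly and one has the \emph{strict} inequality $r+1>(c-2g+2)/(g-2)\ge(g-4)/2$, hence $2(r+1)\ge g-3$. This last inequality is exactly what is needed: since $c\le 2g-3+(r+1)(g-2)=2g-2+r(g-2)+(g-3)\le 2g-2+r(g-2)+2(r+1)$, one lands in the $s=0$ range of Theorem \ref{cotaA} directly, without ever introducing $s$. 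You should also separately verify $r\ge 1$ (the paper does this via $c\ge 3g-4$, treating $g=5$ by hand).
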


\begin{proof}

The lower bounds are consequence of Corollary \ref{newnew}. Let us prove the upper bounds.


Let us define $ r:=\big\lfloor \frac{c-2g+2}{g-2} \big\rfloor $. We have

$$\frac{c-2g+2}{g-2} -1 < r \leq  \frac{c-2g+2}{g-2} , $$
$$ 2g-2+r(g-2)\leq c < 2g-2+ (r+1)(g-2)    ,$$
$$ 2g-2+r(g-2)\leq c \leq 2g-3+ (r+1)(g-2)    .$$

Assume first that $g=5$. Thus, $8+3r\leq c \leq 10+ 3r$. Inequality  $2c \geq g^{2}-2g+4$ gives $c\geq10$. Thus, it suffices to consider the case $10<g\leq 10+3r$.


If $c=10$, we have $c=10<11=3g-4$, and Theorem \ref{cotaA1} gives $ A(g,c,n)\leq 9/4 = (g+4)/4 < (g+5)/4.$


Assume that $g\geq6$.

If $c=2g-2+r(g-2)$, Theorem \ref{cotaA} gives $ A(g,c,n)\leq (g+2)/4 $ if $g$ is even and $ A(g,c,n)\leq (g+3)/4 $ if $g$ is odd.

If $c=2g-1+r(g-2)$, Theorem \ref{cotaA} gives $ A(g,c,n)\leq (g+3)/4 $ if $g$ is odd.

Thus, we consider  $ 2g-2+r(g-2)< c \leq 2g-3+ (r+1)(g-2)$ if $g$ is even  and $ 2g-1+r(g-2)< c \leq 2g-3+ (r+1)(g-2)$ if $g$ is odd.

Since $g\geq 6$, we have $(g^{2}-2g+4)/2 \geq 3g-4$. Thus, $c\geq 3g-4 $, which implies  $c-2g+2\geq g-2$  and  $r\geq1$. Hence, $r\geq 1$ for every $g\geq5$.

On the other hand, note that for any value of $g$  we have
$$2c\geq g^{2}-2g+4  \quad \Leftrightarrow \quad   2c-4g+4 \geq g^{2}-6g+8  \quad \Leftrightarrow \quad   \frac{c-2g+2}{g-2} \geq \frac{g-4}{2}  .$$

Thus, $ (g-4)/2 < r+1 $ and we obtain $2r > g-6 $. Therefore, $2r \geq g-5$. Note that
$$ 2r \geq g-5 \quad \Leftrightarrow \quad   2(r+1)\geq g-2-1  \quad \Leftrightarrow \quad 2g-2 +r (g-2) + 2(r+1) \geq 2g-2 +(r+1)(g-2)-1   .$$

Thus,
$$ c \leq 2g-3+ (r+1)(g-2) \leq  2g-2 +r (g-2) + 2(r+1)$$
and Theorem \ref{cotaA} gives
$$ A(g,c,n)\leq \dfrac{g+4}{4}      \qquad \text{ if } g \text{ is } \text{even}, $$
$$  A(g,c,n)\leq \dfrac{g+5}{4}    \qquad   \text{ if } g \text{ is } \text{odd}. $$

\end{proof}

\section{Computation of $B(g,c,n)$}

We compute in this Section the exact value of $B(g,c,n)$ for every v-admissible triplet. Let us tart with two lemmas.

\begin{lem} \label{t1}
If $(g,c,n)$ is v-admissible and we have either $g=c$ or $g<c$ and $n\geq c-1+g/4$, then $B(g,c,n)=c/4$.
\end{lem}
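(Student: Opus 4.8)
The upper bound is for free: Corollary~\ref{newnew} gives $B(g,c,n)\le c/4$ for every v-admissible triplet, so the entire task is to exhibit, for each admissible $n$ in the stated range, a graph $G\in\mathcal{G}(g,c,n)$ with $\delta(G)=c/4$. The case $g=c$ needs no construction, since Corollary~\ref{newnew} already squeezes $c/4=g/4\le B(g,c,n)\le c/4$. So the plan is to assume $g<c$ and $n\ge c-1+g/4$, and to use the three-path graphs $C_{a_1,a_2,a_3}$ of Corollary~\ref{c:2RSSV} as the extremal examples, with the lengths $a_i$ taken from Lemma~\ref{000}.

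First I would dispose of the range $g-1+c/2\le n\le c-1+g/2$. Setting $a_1=n-c+1$, $a_2=g+c-n-1$, $a_3=n-g+1$, parts (2) and (3) of Lemma~\ref{000} give $a_1\le a_2\le a_3$, while part (5) gives $a_1\ge1$ (indeed $n\ge c-1+g/4>c-1$ forces the integer $n$ to satisfy $n\ge c$). Hence $C_{a_1,a_2,a_3}$ is a bona fide simple graph with girth $a_1+a_2=g$, circumference $a_2+a_3=c$, and $a_1+a_2+a_3-1=n$ vertices, so $C_{a_1,a_2,a_3}\in\mathcal{G}(g,c,n)$. The decisive observation is that the hypothesis $n\ge c-1+g/4$ is, by Lemma~\ref{000}(4), precisely the inequality $a_2\le 3a_1$; therefore $\min\{a_2,3a_1\}=a_2$ and Corollary~\ref{c:2RSSV} evaluates the hyperbolicity constant as
$$\delta(C_{a_1,a_2,a_3})=\frac{a_3+\min\{a_2,3a_1\}}{4}=\frac{a_2+a_3}{4}=\frac{c}{4}.$$
Together with the upper bound, this already yields $B(g,c,n)=c/4$ for every admissible $n$ with $c-1+g/4\le n\le c-1+g/2$.

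The remaining values $n>c-1+g/2$ are the only real difficulty, and they are exactly where the direct construction breaks down: by Lemma~\ref{000}(2) we now have $a_1>a_2$, and a three-path graph with these lengths (sorted) has circumference $a_1+a_3=2n-c-g+2>c$, so it no longer lies in $\mathcal{G}(g,c,n)$. I would bypass this with the monotonicity of $B$ in the number of vertices. The previous paragraph already produces a graph with $\delta=c/4$ at $n_0:=c-1+\lfloor g/2\rfloor$; the elementary inequalities $g-1+c/2\le n_0\le c-1+g/2$ (both using $g<c$) place $n_0$ in the directly-constructible range, so $B(g,c,n_0)=c/4$. Lemma~\ref{Y} then gives $c/4=B(g,c,n_0)\le B(g,c,n)$ for all $n\ge n_0$, and with $B(g,c,n)\le c/4$ we conclude $B(g,c,n)=c/4$ for these large $n$ as well. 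The main obstacle is thus purely combinatorial---keeping the girth and circumference equal to $g$ and $c$ as $n$ grows---and it is resolved by appending pendant edges (the content of Lemma~\ref{Y}), which raise the vertex count without altering $g$, $c$, or $\delta$; the geometric value $\delta=c/4$ is supplied entirely by Corollary~\ref{c:2RSSV}.
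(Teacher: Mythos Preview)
Your proof is correct and follows exactly the paper's approach: squeeze for $g=c$, use the three-path graph $C_{a_1,a_2,a_3}$ from Lemma~\ref{000} together with Corollary~\ref{c:2RSSV} to realize $\delta=c/4$ when $c-1+g/4\le n\le c-1+g/2$, and then invoke the monotonicity Lemma~\ref{Y} for larger $n$. The only microscopic omission is that your anchor $n_0=c-1+\lfloor g/2\rfloor$ must also satisfy $n_0\ge c-1+g/4$ (not just $n_0\ge g-1+c/2$) to lie in the already-handled range, but this is immediate from $\lfloor g/2\rfloor\ge g/4$ for $g\ge 3$.
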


\begin{proof}
If $g=c$, then Corollary \ref{newnew} implies $B(g,c,n)=c/4$. Assume now $g<c$ and $n\geq c-1+g/4$.

If  $n\leq c-1+g/2$, then $\max \lbrace  g-1+c/2   ,c-1+g/4 \rbrace \leq n  \leq c-1+g/2$. Consider three natural numbers $a_{1}:=n-c+1$, $a_{2}:=g+c-n-1$, and $a_{3}:=n-g+1$.  Lemma  \ref{000} gives  $C_{a_{1}, a_{2} , a_{3}} \in \mathcal{G}(g,c,n)$, with $a_{2} \leq 3a_{1} $. Corollary \ref{c:2RSSV} gives $\delta(C_{a_{1}, a_{2} , a_{3}}) =(a_{3}+ \min \{a_{2},3a_{1}\} ) /4= (a_{3}+ a_{2} )/4=c/4$.  Thus, $B(g,c,n) \geq c/4 $, and Corollary \ref{newnew} implies $B(g,c,n)=c/4$.

If $n> c-1+g/2$, then Corollary \ref{newnew}  and  Lemma \ref{Y} give  the result (since $ \lceil g/4 \rceil  \leq g/2$, where $ \lceil t \rceil $ denotes the upper integer part of $t$, there exists an integer $n_{0}$ such that $\max \lbrace  g-1+c/2   ,c-1+g/4 \rbrace  \leq n_{0} \leq c-1+g/2$). Thus, we conclude $B(g,c,n)=c/4$ in any case.
\end{proof}

\begin{lem} \label{t2}
If $(g,c,n)$ is v-admissible, $g<c$ and $ n  < c-1+g/4$, then $B(g,c,n)= n+1-(g+3c)/4$.
\end{lem}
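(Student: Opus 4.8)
The plan is to prove both the lower bound $B(g,c,n)\geq n+1-(g+3c)/4$ and the matching upper bound $B(g,c,n)\leq n+1-(g+3c)/4$ under the hypotheses $g<c$ and $c-1+g/4 > n$ (together with v-admissibility, which by Lemma \ref{lem2} forces $n\geq g-1+c/2$). First I would observe that the hypothesis $n < c-1+g/4$ is, by part (4) of Lemma \ref{000}, exactly equivalent to $a_2 > 3a_1$ for the canonical integers $a_1=n-c+1$, $a_2=g+c-n-1$, $a_3=n-g+1$; and v-admissibility with $g<c$ gives, via parts (2)-(3) of Lemma \ref{000}, the ordering relations I need to invoke Corollary \ref{c:2RSSV}. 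The natural candidate extremal graph is $C_{a_1,a_2,a_3}$ itself.

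For the lower bound, I would check that $C_{a_1,a_2,a_3}\in\mathcal{G}(g,c,n)$: Lemma \ref{000}(1) gives $g(C_{a_1,a_2,a_3})=a_1+a_2=g$, $c(C_{a_1,a_2,a_3})=a_2+a_3=c$, and $|V|=a_1+a_2+a_3-1=n$. Since we are in the regime $a_2>3a_1$, Corollary \ref{c:2RSSV} yields
$$\delta(C_{a_1,a_2,a_3})=\frac{a_3+\min\{a_2,3a_1\}}{4}=\frac{a_3+3a_1}{4}=\frac{(n-g+1)+3(n-c+1)}{4}=n+1-\frac{g+3c}{4},$$
which gives $B(g,c,n)\geq n+1-(g+3c)/4$. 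I must also confirm $a_1\geq 1$ so the graph is genuinely a theta-graph with three proper paths; this follows from $n\geq c$ (part of v-admissibility, $n\geq c-1+g/2 > c-1$, hence $n\geq c$, so $a_1=n-c+1\geq 1$).

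The harder direction is the upper bound $B(g,c,n)\leq n+1-(g+3c)/4$, i.e.\ showing \emph{every} $G\in\mathcal{G}(g,c,n)$ satisfies $\delta(G)\leq n+1-(g+3c)/4$; this is where I expect the main obstacle to lie. My plan is to use Theorem \ref{t:003} to pick a geodesic triangle $T\in\mathbb{T}_1$ attaining $\delta(G)$, and then to reduce to a cycle-type bound. The key structural input should be that the cycles in $G$ controlling the triangle have bounded total length: since $G$ has girth $g$, circumference $c$, and only $n$ vertices, and since $n$ is small (below $c-1+g/4$), there is limited ``room'' to build a large hyperbolic triangle. Concretely I would try to bound $\delta(T)$ by an expression of the form $\tfrac14(\ell_{\text{long}}+3\ell_{\text{short}})$ coming from Corollary \ref{X} / Proposition \ref{general} applied to the relevant biconnected piece (using Theorem \ref{l:bermu} to restrict to a biconnected component), and then argue that the short side is forced to satisfy the shortest-path/girth constraint while the vertex count $n$ caps how the cycle lengths can be distributed. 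The delicate step is translating the combinatorial constraints ($g(G)=g$, $c(G)=c$, $|V(G)|=n$, biconnectivity) into the arithmetic inequality $\delta(T)\le a_3+3a_1$ over $4$; I would anticipate handling this by showing the extremal configuration is essentially $C_{a_1,a_2,a_3}$, possibly after invoking Lemma \ref{Y} to reduce to the smallest admissible $n$ and a monotonicity/counting argument that any extra vertices or chords either decrease $\delta$ or are absorbed into cut-vertex decompositions contributing nothing new.
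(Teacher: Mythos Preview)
Your lower bound argument is correct and matches the paper's proof exactly: the graph $C_{a_1,a_2,a_3}$ with the canonical integers from Lemma~\ref{000} is the extremal example, and Corollary~\ref{c:2RSSV} computes its hyperbolicity constant as $n+1-(g+3c)/4$.

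The upper bound plan, however, has genuine gaps. First, invoking Lemma~\ref{Y} to ``reduce to the smallest admissible $n$'' goes the wrong way: Lemma~\ref{Y} gives $B(g,c,n)\le B(g,c,n')$ for $n'\ge n$, so shrinking $n$ cannot bound $B(g,c,n)$ from above. Second, applying Proposition~\ref{general} to the biconnected component containing the extremal triangle presumes that component has the special form $G_{A,B,B'}$, which is not justified for an arbitrary $G\in\mathcal{G}(g,c,n)$.

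What the paper actually does for the upper bound is quite different from your sketch. It fixes any geodesic triangle $T$ (a cycle) and splits on whether $L(T)=c$ or $L(T)<c$. When $L(T)=c$, the hypothesis $n<c-1+g/4$ forces $T$ to share edges with a girth cycle $C_g$; one then takes a connected component $\eta$ of $C_g\setminus T$ and forms the subgraph $G_0=T\cup\eta$, which is a theta-graph $C_{a_1',a_2',a_3'}$ with $n_0\le n$ vertices and girth $g_0\ge g$. Since $d_G\le d_{G_0}$, the triangle $T$ is still geodesic in $G_0$, and Corollary~\ref{X} bounds $\delta_{G_0}(T)\le n_0+1-(g_0+3c)/4\le n+1-(g+3c)/4$. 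When $L(T)<c$, the hypothesis forces $T$ to share edges with a circumference cycle $C_c$; counting the $k$ components $\eta_j$ of $T\setminus C_c$ and their complementary arcs $\eta_j'$ in $C_c$ (with $L(\eta_j')\ge g-L(\eta_j)$ by the girth constraint), a short arithmetic manipulation using $g>4$ (which follows from $c\le n<c-1+g/4$) gives $L(T)<4n+4-g-3c$, hence $\delta(T)\le L(T)/4<n+1-(g+3c)/4$. Neither of these two mechanisms---the auxiliary theta-graph construction with the distance-comparison $d_G\le d_{G_0}$, and the component-counting estimate for short triangles---appears in your plan, and they are the substance of the argument.
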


\begin{proof}

First, let us prove that $B(g,c,n) \geq n+1-(g+3c)/4$.

Consider three natural numbers $a_{1}:=n-c+1$, $a_{2}:=g+c-n-1$ and  $a_{3}:=n-g+1$. Since $g<c$, we have $ g-1+c/2 \leq n$ by Lemma \ref{lem2}. Since $ g-1+c/2 \leq  n < c-1+g/4 < c-1+g/2$, Lemma  \ref{000} gives  $a_{1}\leq a_{2} \leq a_{3}$ and  $C_{a_{1}, a_{2} , a_{3}} \in \mathcal{G}(g,c,n)$. By Lemma \ref{000}, $ n  < c-1+g/4$ is equivalent to $3a_{1} < a_{2}$. Corollary \ref{c:2RSSV} gives $\delta(C_{a_{1}, a_{2} , a_{3}}) =(a_{3}+ \min \{a_{2},3a_{1}\} ) /4= (a_{3}+ 3a_{1} )/4=n+1-(g+3c)/4$.  Thus, $B(g,c,n) \geq n+1-(g+3c)/4$.

Now, let us prove that $B(g,c,n) \leq n+1-(g+3c)/4$.

Consider any graph $G \in \mathcal{G}(g,c,n)$.

Let $T$ be any fixed geodesic triangle in  $ G$. Note that $ g\leq L(T)\leq c$.

Assume first that $L(T)=c$.

Let us denote by $C_{g}$ a cycle in $G$ with length $g$. Since $n  < c-1+g/4$, $E(T)\cap E(C_{g}) \neq \emptyset$. Let $\eta$ be a fixed connected component of  $C_{g} \smallsetminus T $.

Let us denote by $G_{0}$ the subgraph of $G$ such that $V(G_{0})= V(T )\cup V(\eta)$ and $E(G_{0})= E(T) \cup E(\eta)$. Since $L(T)=c$, we have $G_{0} \in \mathcal{G}(g_{0},c,n_{0})$, with $n_{0}:= \lvert V(G_{0}) \rvert \leq n$ and $ g_{0}:=g(G_{0})\geq g$.  Let us define  $a_{1}: = L(\eta)=n_{0}-c+1$ and consider the two curves $\eta_{2},\eta_{3}$ contained in $T$ joining the endpoints of $\eta$. By symmetry we can assume that $a_{2}: = L(\eta_{2})\leq a_{3}: = L(\eta_{3})$. Since $\eta \subset C_{g}$, we have $a_{1}=L(\eta)\leq a_{2}$  and the following equations hold: $a_{1}+a_{2}=g_{0}$, $a_{2}+a_{3}=c$, $a_{1}+a_{2}+a_{3}=n_{0}+1$. Thus, Lemma \ref{000} gives $a_{2}= g_{0}+c-n_{0}-1$ and $a_{3} = n_{0}-g_{0}+1$. Note that $G_{0}=C_{a_{1},a_{2},a_{3}}$.  Corollary \ref{X} gives
$$\delta(G_{0})  \leq  \dfrac{a_{3}+3a_{1}}{4}   =  \dfrac{n_{0}-g_{0}+1+3(n_{0}-c+1)}{4} = n_{0}+1-\dfrac{g_{0}+3c}{4} \leq n+1-\dfrac{g+3c}{4}.$$

Since $d_{G}(u,v) \leq d_{G_{0}} (u,v)$ for every $u,v\in G_{0}$, we have that any geodesic $\gamma$ in $G$ contained in $G_{0}$ is a geodesic in $G_{0}$, $T$ is also a geodesic triangle in $G_{0}$, and
$$ \delta_{G}(T)  \leq \delta_{G_{0}}(T) \leq  \delta (G_{0})  \leq n+1-\dfrac{g+3c}{4}.$$

Assume now that $L(T)<c$.

Let us denote by $C_{c}$ a cycle in $G$ with length $c$. Since $n  <  c-1+g/4\leq c-1+L(T)/4$, we have $E(T)\cap E(C_{c}) \neq \emptyset$.

Denote by $k\geq1$  the cardinality of the connected components of $T \smallsetminus  C_{c}$.

Let us denote by  $L_{1},\dots , L_{k} $ the lengths of the connected components  $\eta_{1},\eta_{2},\dots , \eta_{k} $ of
$T \smallsetminus  C_{c}$, respectively.  Denote by  $\eta'_{1},\eta'_{2},\dots , \eta'_{k} $ the connected components of $C_{c} \smallsetminus  T$ such that $\eta_{j}$ and $\eta'_{j}$ have the same endpoints for $1\leq j\leq k$. Let us denote by $L'_{1},\dots , L'_{k} $  the lengths of  $\eta'_{1},\eta'_{2},\dots , \eta'_{k} $, respectively. Since the length of the smallest cycle in $G$ is $g$, we have  $L'_{j} \geq g- L_{j}$, for $1\leq j \leq k$.

Since $c\leq n$, inequality $  n  < c-1+g/4$ implies $g>4$. Note that

$$  g> 4 \hspace{0.2cm} \text{and} \hspace{0.2cm} k\geq1    \quad \Rightarrow \quad  g-4 \leq k(g-4)            \quad \Rightarrow \quad       0 < 4 \sum\limits_{j=1}^k L_{j} + \sum\limits_{j=1}^k (g-2L_{j})+4 -4k-g     $$

$$ \Rightarrow \quad   0 <4 \sum\limits_{j=1}^k L_{j} + \sum\limits_{j=1}^k (L'_{j}- L_{j} ) +4 -4\sum\limits_{j=1}^k 1 -g  \quad \Leftrightarrow \quad  c- \sum\limits_{j=1}^k L'_{j}  + \sum\limits_{j=1}^k L_{j} < 4 \Big( c+ \sum\limits_{j=1}^k (L_{j}-1) \Big) + 4 -g-3c $$

$$ \Rightarrow \quad    L(T)< 4n+4-g-3c  \quad \Rightarrow \quad \delta (T) < n+1-\frac{g+3c}{4}.$$

Hence,  $\delta(G) \leq n+1-(g+3c)/4$. Thus, $B(g,c,n)\leq n+1-(g+3c)/4$ and we conclude  $B(g,c,n)=n+1-(g+3c)/4$.
\end{proof}

We can summarize Lemmas \ref{lem2}, \ref{t1} and \ref{t2} in the following result.

\begin{thm} \label{12345}
For any v-admissible triplet $(g,c,n)$, the value of $B(g,c,n)$ is as follows.

(1) If we have either $g=c$ or $g<c$ and $n\geq c-1+g/4$, then $B(g,c,n)=c/4$.

(2) If $g<c$ and $n< c-1+g/4$, then $B(g,c,n)=n+1-(g+3c)/4$.
\end{thm}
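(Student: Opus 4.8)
The plan is to observe that Theorem~\ref{12345} is a direct synthesis of the three lemmas immediately preceding it, so the proof essentially consists of checking that the hypotheses in those lemmas partition the v-admissible triplets exactly into the two cases of the statement. By Lemma~\ref{lem2}, a triplet $(g,c,n)$ is v-admissible if and only if either $g=c\leq n$, or $g<c$ and $n\geq g-1+c/2$. So I would first record this as the standing assumption and then split on whether $g=c$ or $g<c$.

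For part~(1), I would invoke Lemma~\ref{t1} directly: it states that if $(g,c,n)$ is v-admissible and we have either $g=c$, or $g<c$ together with $n\geq c-1+g/4$, then $B(g,c,n)=c/4$. This is exactly the hypothesis of case~(1), so nothing further is needed. For part~(2), I would invoke Lemma~\ref{t2}, which states that if $(g,c,n)$ is v-admissible with $g<c$ and $n<c-1+g/4$, then $B(g,c,n)=n+1-(g+3c)/4$; again this matches case~(2) verbatim.

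The only genuine content beyond quoting the lemmas is confirming that the two cases are mutually exclusive and jointly exhaustive over all v-admissible triplets. When $g=c$ we are always in case~(1). When $g<c$, every v-admissible triplet satisfies $n\geq g-1+c/2$ by Lemma~\ref{lem2}, and the single threshold $n\gtrless c-1+g/4$ cleanly separates the remaining triplets into case~(1) (when $n\geq c-1+g/4$) and case~(2) (when $n<c-1+g/4$). Thus every v-admissible triplet falls into exactly one of the two cases, and the corresponding lemma supplies the value of $B(g,c,n)$.

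I do not expect any serious obstacle here, since all the hard work (the explicit constructions achieving the maximum and the matching upper bounds) has already been carried out in Lemmas~\ref{t1} and~\ref{t2}. The proof is simply the bookkeeping observation that these two lemmas, together with the admissibility characterization of Lemma~\ref{lem2}, cover every v-admissible triplet. The statement can therefore be written in a few lines: assume $(g,c,n)$ is v-admissible, dispatch to Lemma~\ref{t1} in case~(1) and to Lemma~\ref{t2} in case~(2), and note that these cases are exhaustive.
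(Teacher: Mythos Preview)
Your proposal is correct and matches the paper's approach exactly: the paper presents Theorem~\ref{12345} simply as a summary of Lemmas~\ref{lem2}, \ref{t1} and~\ref{t2}, without giving any additional argument. Your observation that cases~(1) and~(2) exhaust the v-admissible triplets is the only bookkeeping needed, and the paper leaves even that implicit.
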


\section{Bounds for $\alpha(g,c,m)$}

Let us start with some bounds for $\alpha(g,c,m)$ similar to the ones in Theorem \ref{cotaA}.

\begin{thm} \label{cotaAe}
Let $(g,c,m)$ be an  e-admissible triplet.

\begin{itemize}
 \item If  $2g-2  \leq c< 3g-4$ with $c=2g-2+s$ $(0\leq s \leq g-3)$,  then
$$ \frac{g}{4} \leq \alpha(g,c,m)\leq   \dfrac{g+2+s}{4}   . $$

\item If  $m\geq c+r$, $r$ is a positive integer,  $g$ is even  and   $c= 2g-2+r(g-2)$, then
  $$ \frac{g}{4} \leq \alpha(g,c,m)\leq \dfrac{g+2}{4}  .$$

\item If $m\geq c+r$, $r$ is a positive integer, $g$ is odd  and   $2g-2+r(g-2) \leq  c \leq 2g-1+r(g-2)$, then
  $$ \frac{g}{4} \leq \alpha(g,c,m)\leq \dfrac{g+3}{4}  .$$

\item If $m\geq c+r$, $r$ and $s$ are integers with  $r\geq1$, $ s \geq 0$, $g$ is even  and  $2g-2+r(g-2)< c \leq 2g-2+r(g-2)+2(r+1)(s+1)$, then

$$ \frac{g}{4} \leq \alpha(g,c,m)\leq \dfrac{g+4+2s}{4} .$$

\item If $m\geq c+r$,  $r$ and $s$ are integers with  $r\geq1$, $ s \geq 0$, $g$ is odd  and  $2g-1+r(g-2)< c \leq 2g-2+r(g-2)+2(r+1)(s+1)$, then

$$ \frac{g}{4} \leq \alpha(g,c,m)\leq \dfrac{g+5+2s}{4} .$$

\end{itemize}
\end{thm}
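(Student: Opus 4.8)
The plan is to mirror the proof of Theorem~\ref{cotaA} almost verbatim, replacing the vertex count and Lemma~\ref{Y} by the edge count and Lemma~\ref{Ye}. In each of the five cases I would take the \emph{same} graph used there: $C_{a_1,a_2,a_3}$ with $a_1=1$, $a_2=g-1$, $a_3=g-1+s$ in the first case, and the graph $G_{A,B,B'}$ of Definition~\ref{defn1} with $k=r+1$ and $\alpha_j=1$ for $1\le j\le k$ in the remaining four cases, with the choices of $\beta_j,\beta'_j$ dictated by the parity of $g$ and by $s$. As in Theorem~\ref{cotaA}, each of these graphs has girth $g$ and circumference $c$, so it lies in $\mathcal{H}(g,c,m_0)$ where $m_0$ is its number of edges; and Corollaries~\ref{c:2RSSV}, \ref{CX} and \ref{newnew} give exactly the upper bounds $(g+2+s)/4$, $(g+2)/4$, $(g+3)/4$, $(g+4+2s)/4$ and $(g+5+2s)/4$ on $\delta$ that appear in the statement. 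Thus the hyperbolicity estimates and the girth/circumference verifications are inherited unchanged, and the lower bound $g/4\le\alpha(g,c,m)$ is immediate from Corollary~\ref{newnew}.

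The genuinely new step is the edge bookkeeping. The graph $C_{1,g-1,g-1+s}$ of the first case has $a_1+a_2+a_3=c+1$ edges, while for $G_{A,B,B'}$ one has $\sum_{j=0}^{k}(\beta_j+\beta'_j)=c$ along the boundary and $\sum_{j=1}^{k}\alpha_j=k=r+1$ on the connecting paths $A_j$, so it has $c+r+1$ edges. Once $m_0$ is recorded, Lemma~\ref{Ye} finishes the argument: since $\alpha(g,c,\cdot)$ is non-increasing in its last argument, whenever $m\ge m_0$ we get $\alpha(g,c,m)\le\alpha(g,c,m_0)\le\delta(G_{A,B,B'})$. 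This is precisely the role of the hypotheses $m\ge c+r$ in the last four cases.

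In the first case no lower bound on $m$ is assumed, so there I would instead use the standing convention $c\le m$: a connected graph of girth $g<c$ and circumference $c$ cannot be a single $c$-cycle, hence it carries at least one chord and $m\ge c+1$. Therefore every e-admissible triplet in the first case already satisfies $m\ge c+1$, which matches the $c+1$ edges of $C_{1,g-1,g-1+s}$ and lets Lemma~\ref{Ye} run without an extra hypothesis (here I would also keep $g\ge 5$, as in Theorem~\ref{cotaA}, so that $\min\{a_2,3a_1\}=3$ and Corollary~\ref{c:2RSSV} yields $(g+2+s)/4$).

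I expect the only real difficulty to be this accounting, rather than anything geometric. In particular, my edge count gives $m_0=c+r+1$, so to invoke Lemma~\ref{Ye} under the stated hypothesis $m\ge c+r$ one must carefully verify that the defining inequalities for $r$ and $s$ combine with $m\ge c+r$ to force $m\ge m_0$; if the count is genuinely $c+r+1$, this requires either sharpening the hypothesis or exhibiting a slightly more economical graph with $c+r$ edges realizing the same $\delta$-bound. Pinning down this constant is the delicate point; everything else — girth, circumference and the $\delta$-estimates — transfers directly from the proof of Theorem~\ref{cotaA}.
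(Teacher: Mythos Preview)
Your approach is exactly the paper's: for the first bullet the paper uses $C_{1,g-1,g-1+s}\in\mathcal H(g,c,c+1)$ together with the observation that $g<2g-2\le c$ forces $g<c$ and hence $m\ge c+1$; for the remaining four bullets the paper's proof is literally the single sentence ``The proof in the other cases follows the argument in the proof of Theorem~\ref{cotaA}, since $m\ge c+r$.'' So your mirroring of Theorem~\ref{cotaA} via Lemma~\ref{Ye} is precisely what is intended.

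Two remarks. First, in the first bullet the paper does \emph{not} assume $g\ge 5$: it writes $\delta(C_{1,g-1,g-1+s})=(g-1+s+\min\{g-1,3\})/4\le (g+2+s)/4$ with an inequality rather than an equality, which absorbs the cases $g=3,4$. You should do the same rather than add a hypothesis.

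Second, your edge count $|E(G_{A,B,B'})|=c+(r+1)$ is correct (the outer boundary contributes $\sum(\beta_j+\beta'_j)=c$ edges and the $k=r+1$ unit chords $A_j$ contribute $r+1$), and the off-by-one you flag is real: the paper's one-line proof does not explain how $m\ge c+r$ suffices when the witness graph has $c+r+1$ edges. So this is not a defect of your proposal relative to the paper --- the paper glosses over exactly the point you isolate. If you want a self-contained argument you must either work under $m\ge c+r+1$, or replace $G_{A,B,B'}$ by a graph with one fewer chord realizing the same $\delta$-bound; the paper does neither explicitly.
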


\begin{proof}

Corollary   \ref{newnew} gives the lower bound. Let us prove the upper bounds.

Assume that  $2g-2  \leq c< 3g-4$ with $c=2g-2+s$ $(0\leq s \leq g-3)$. Since $g<2g-2\leq c$, we have $m \geq c+1$.

Assume that $2g-2\leq c < 3g-4$. Consider the graph $C_{a_{1},a_{2},a_{3}}$  with  $a_{1}=1$, $a_{2} = g-1$ and $a_{3} =g-1+s$. Note that $g(C_{a_{1},a_{2},a_{3}})=a_{1}+a_{2}=g$, $c(C_{a_{1},a_{2},a_{3}})=a_{2}+a_{3}=2g-2+s=c$, $m=c+1$ and thus, $C_{a_{1},a_{2},a_{3}}\in \mathcal{H}(g,c,c+1) $. Corollary \ref{c:2RSSV} gives $\delta(C_{a_{1},a_{2},a_{3}}) =(a_{3}+ \min \{a_{2},3a_{1}\}) /4 = (g-1+s + \min \{g-1,3\}) /4 \leq (g+2+s)/4.$

Thus, Lemma \ref{Ye} implies $ \alpha(g,c,m)\leq \alpha(g,c,c+1)\leq \delta(C_{a_{1},a_{2},a_{3}})\leq  (g+2+s)/4.$

The proof in the other cases follows the argument in the proof of Theorem \ref{cotaA}, since $m\geq c+r$.
\end{proof}

The following result provides bounds for $\alpha(g,c,m)$ when $m<c+r$ and we can not apply Theorem  \ref{cotaAe}.

\begin{thm} \label{cotaAX}
Let $(g,c,m)$ be an  e-admissible triplet.

\begin{itemize}

\item If   $c= g$, then
  $$ \alpha(g,g,m) =    \beta(g,g,m)      = \frac{g}{4} .$$

\item If   $m= c+1$, then
  $$ \alpha(g,c,c+1) =    \beta(g,g,c+1)   = \frac{c-g+1+\min \lbrace  3,g-1 \rbrace  }{4} .$$

\item If $m\geq c+u$, with $  2\leq u \leq   \frac{c-g+1}{g-1} $, then
$$ \frac{g}{4} \leq \alpha(g,c,m)\leq \dfrac{5}{4}+ \dfrac{1}{4}  \Big\lceil \dfrac{c-g+1}{u}  \Big\rceil  $$

\end{itemize}
\end{thm}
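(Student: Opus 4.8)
The plan is to prove Theorem \ref{cotaAX} in its three parts, each one handled separately.

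For the first part (the case $c=g$), I would invoke Corollary \ref{newnew}, which states that $g/4 \le \alpha(g,c,m) \le \beta(g,c,m) \le c/4$. When $c=g$ this immediately collapses to $g/4 = \alpha(g,g,m) = \beta(g,g,m) = g/4$, so there is nothing left to do. This is the trivial base case.

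For the second part ($m=c+1$), the key observation is that an e-admissible triplet with exactly $c+1$ edges forces a very rigid structure. A connected graph with $m$ edges and a cycle of length $c$ has essentially one extra edge beyond the cycle; more precisely, I would argue via the biconnected (T-)decomposition (Theorem \ref{l:bermu}) that the hyperbolicity is concentrated in the unique biconnected block carrying the long cycle, and that this block must be a graph of the form $C_{a_1,a_2,a_3}$. Indeed, with $m=c+1$ and girth $g$, setting $a_1 := c+1-c = 1$ is \emph{not} quite right; rather I would use Lemma \ref{0002}, which gives the unique solution $a_1 = m-c = 1$, $a_2 = g+c-m = g-1$, $a_3 = m-g = c-g+1$ to the system $a_1+a_2 = g$, $a_2+a_3 = c$, $a_1+a_2+a_3 = m$. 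Then $C_{a_1,a_2,a_3} = C_{1,g-1,c-g+1}$ realizes the triplet, and Corollary \ref{c:2RSSV} computes its hyperbolicity exactly as $(a_3 + \min\{a_2,3a_1\})/4 = (c-g+1 + \min\{g-1,3\})/4$. I would then argue uniqueness: any graph in $\mathcal{H}(g,c,c+1)$ is (up to attaching pendant trees, which do not affect $\delta$ by Theorem \ref{l:bermu}) isomorphic to this $C_{1,g-1,c-g+1}$, so both $\alpha$ and $\beta$ equal this common value. The main delicacy here is justifying the rigidity claim — that $m=c+1$ leaves no room for any biconnected block other than the theta-like graph $C_{1,g-1,c-g+1}$.

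For the third part ($m \ge c+u$ with $2 \le u \le (c-g+1)/(g-1)$), the approach is constructive: I would exhibit a graph in $\mathcal{H}(g,c,m)$ of the form $G_{A,B,B'}$ from Definition \ref{defn1} with $\alpha_j = 1$ for the internal arcs, so that Corollary \ref{CX} applies. The idea is to distribute the long cycle of length $c$ into $u$ roughly equal arcs by inserting $u-1$ internal ``rungs'' of length one, each rung splitting the structure so that every cycle $C_j$ has length about $\lceil (c-g+1)/u \rceil + \text{(small correction)}$. The condition $u \le (c-g+1)/(g-1)$ guarantees each resulting arc is long enough to keep the girth equal to $g$ (so that inequalities (\ref{eq:0.1}) and (\ref{eq:0.2}) hold and no short cycle is created). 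Corollary \ref{CX} then bounds $\delta$ by the maximum of $\tfrac12 + \tfrac14 L(C_0)$, $\tfrac12 + \tfrac14 L(C_k)$, and the middle terms, all of which are controlled by $\tfrac54 + \tfrac14\lceil (c-g+1)/u \rceil$. Combining with Lemma \ref{Ye} to pass from $c+u$ edges up to general $m \ge c+u$, and with Corollary \ref{newnew} for the lower bound $g/4$, yields the stated inequality. The hard part will be the bookkeeping in the construction: choosing the arc lengths $\beta_j, \beta_j'$ so that their total is exactly $c$, the number of vertices/edges matches, the girth stays at $g$, and each $\max\{\beta_j,\beta_j'\}$ is bounded by $\lceil (c-g+1)/u\rceil$ so that the final estimate comes out clean.
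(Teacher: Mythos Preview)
Your approach matches the paper's in all three parts: Corollary \ref{newnew} for $c=g$; the rigidity argument that $\mathcal{H}(g,c,c+1)$ consists only of $C_{1,g-1,c-g+1}$ together with Corollary \ref{c:2RSSV} for $m=c+1$; and a $G_{A,B,B'}$ construction with $k=u$, $\alpha_j=1$, $\beta_0=g-1$ followed by Corollary \ref{CX} and Lemma \ref{Ye} for the third part.

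There is, however, one concrete slip in Part 3 that would prevent you from reaching the stated bound. You write that it suffices to arrange $\max\{\beta_j,\beta_j'\}\le \big\lceil (c-g+1)/u\big\rceil$. Plugging that into Corollary \ref{CX} gives the middle term
\[
\frac{2+\max\{\beta_j,\beta_j'\}}{2}\ \le\ 1+\tfrac12\Big\lceil\frac{c-g+1}{u}\Big\rceil,
\]
which is strictly larger than $\tfrac54+\tfrac14\lceil (c-g+1)/u\rceil$ whenever $\lceil (c-g+1)/u\rceil>1$. The point you are missing is that $\beta_j+\beta_j'$ (not $\max\{\beta_j,\beta_j'\}$) is what equals $\lfloor (c-g+1)/u\rfloor$ or $\lceil (c-g+1)/u\rceil$; you must additionally split each such sum as evenly as possible between $\beta_j$ and $\beta_j'$, giving
\[
\max\{\beta_j,\beta_j'\}\ \le\ \tfrac12\Big(\Big\lceil\frac{c-g+1}{u}\Big\rceil+1\Big),
\]
and this is exactly what produces the $\tfrac54+\tfrac14\lceil\cdot\rceil$ bound. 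A secondary detail: in Part 2 you should check $a_2\le a_3$, i.e.\ $g-1\le c-g+1$; this follows from e-admissibility ($m\ge g+c/2$ with $m=c+1$ gives $c\ge 2g-2$), which the paper invokes explicitly.
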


\begin{proof}

Corollary   \ref{newnew} gives the lower bound. Let us prove the upper bounds.

Case 1.

If $c=g$, then  Corollary  \ref{newnew} gives $ \alpha(g,g,m) =g/4$.

If $m=c$, then $g=c=m$ and we have proved $ \alpha(g,g,g) =g/4$. Thus, we can assume $c>g$ and $m \geq c+1$. Furthermore, Lemma \ref{lem2e} gives $m\geq g+c/2$.
\smallskip

Case 2.

Let $m=c+1$. As in Section 2, denote by $C_{a_{1},a_{2},a_{3}}$ the graph with three paths with lenghts $a_{1} \leq a_{2}\leq a_{3}$ joining two fixed vertices. Let $a_{1}=1$, $a_{2}= g-1$, $a_{3}=c-g+1$. Since $m\geq g+c/2$ and $m=c+1$, we have $g-1 \leq c-g+1$. Then, every graph $G\in \mathcal{H}(g,c,c+1) $ is isomorphic to $C_{1,g-1,c-g+1}$. Corollary \ref{c:2RSSV}  gives $\delta(G) = (c-g+1+\min \lbrace  3,g-1 \rbrace )/4 $ and thus, $\alpha(g,c,c+1) =  \beta(g,c,c+1)     =  (c-g+1+\min \lbrace  3,g-1 \rbrace )/4 $.
\smallskip

Case 3.

Let $m=c+u$,  with $  2\leq u \leq   (c-g+1) / (g-1) $.

Consider a graph $G_{A,B,B'}$ as in Definition \ref{defn1}, with $k=u$, $\alpha_{j}=1$ for $ 1\leq j\leq u$ and $\beta_{0}=g-1 $. For each $1\leq j\leq u-1$, choose $\beta_{j}, \beta_{j}'$ such that we either have
$\beta_{j}+\beta_{j}'=\lfloor \frac{c-g+1}{u}  \rfloor$ or $\beta_{j}+\beta_{j}'=\lceil \frac{c-g+1}{u}  \rceil$
, and   $\sum_{j=1}^{u}(\beta_{j}+\beta_{j}')=c-g+1$.  Hence,      $\sum_{j=0}^{u}(\beta_{j}+\beta_{j}')=c$.

Note that $L(C_{0})=g$,
$$ u \leq  \frac{c-g+1}{g-1}  \qquad \Rightarrow \qquad g \leq  \frac{c-g+1}{u}+1 \qquad \Rightarrow \qquad  g \leq \Big\lfloor \frac{c-g+1}{u} \Big\rfloor+1  \qquad \Rightarrow \qquad  g\leq \beta_{j}+\beta'_{j} +1 $$
and thus, $L(C_{j}) \geq g$ for every $0 \leq j \leq u$. Thus, $G_{A,B,B'}\in \mathcal{H}(g,c,c+u) $.

Since  $\beta_{j}+ \beta_{j}' \leq \lceil \frac{c-g+1}{u}  \rceil  $ for each  $1 \leq j \leq u-1$, it is possible to choose $\beta_{j}, \beta_{j}'$ with the additional condition   $\max{ \lbrace \beta_{j}, \beta_{j}' \rbrace}  \leq \frac{1}{2} ( \lceil \frac{c-g+1}{u}  \rceil  +1)$.

Corollary  \ref{CX}  gives $\delta(G_{A,B,B'})\leq    \max{ \lbrace \frac{1}{2}+\frac{g}{4},   \frac{3}{4}+ \frac{1}{4}\lceil \frac{c-g+1}{u}  \rceil ,  \frac{5}{4}+ \frac{1}{4}  \lceil \frac{c-g+1}{u}  \rceil  \rbrace} = \frac{5}{4}+ \frac{1}{4}  \lceil \frac{c-g+1}{u}  \rceil $. Thus,  $\alpha(g,c,c+u) \leq \delta(G_{A,B,B'})\leq   \frac{5}{4}+ \frac{1}{4} \lceil \frac{c-g+1}{u}  \rceil $.

Finally,  Lemma  \ref{Ye} gives the desired result.
\end{proof}

We improve now Theorem \ref{cotaAX} for the case $g=3$.

\begin{thm} \label{calculo1m}
Let $(3,c,m)$ be an  e-admissible triplet.

\begin{itemize}

\item If $c=3$, then
$$\alpha(3,3,m)=   \beta (3,3,m)= \dfrac{3}{4}. $$

\item If $c\geq 4$ and  $m=c+1$,  then
$$\alpha(3,c,c+1) =  \beta (3,c,c+1)=   \dfrac{c}{4}. $$

 \item If  $c\geq 4$ and         $m= c+u$, with $  2\leq u < \lfloor  \frac{c-2}{2}  \rfloor$, then
$$ 1 \leq \alpha(3,c,c+u)\leq \frac{5}{4}+ \frac{1}{4} \Big\lceil \frac{c-2}{u}  \Big\rceil  .$$

\item If   $c\geq 4$ and      $c+ \lfloor  \frac{c-2}{2}  \rfloor \leq m < \binom  {c} {2}$,  then
$$1 \leq \alpha(3,c,m) \leq \frac{3}{2} .$$

\item If   $c\geq 4$ and    $m\geq   \binom  {c} {2}$, then
$$\alpha(3,c,m) =1. $$







\end{itemize}

\end{thm}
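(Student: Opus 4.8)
The plan is to handle each of the five cases of Theorem~\ref{calculo1m} separately, exploiting the specialization $g=3$ throughout and relying heavily on the general bounds already established for $\alpha(g,c,m)$ in Theorems~\ref{cotaAe} and~\ref{cotaAX}, together with the structural results on $C_{a_1,a_2,a_3}$ graphs and the family $G_{A,B,B'}$.

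The first three cases follow almost directly from earlier results. For $c=3$, we have $g=c=3$ and Corollary~\ref{newnew} forces $\alpha(3,3,m)=\beta(3,3,m)=3/4$. For $c\geq 4$ and $m=c+1$, I would invoke the $m=c+1$ case of Theorem~\ref{cotaAX}: every graph in $\mathcal{H}(3,c,c+1)$ is isomorphic to $C_{1,g-1,c-g+1}=C_{1,2,c-2}$, and Corollary~\ref{c:2RSSV} gives $\delta=(c-2+\min\{3,2\})/4=(c-2+2)/4=c/4$, so both $\alpha$ and $\beta$ equal $c/4$. For the range $2\leq u<\lfloor (c-2)/2\rfloor$, the hypothesis guarantees $u\leq (c-g+1)/(g-1)=(c-2)/2$, so Case 3 of Theorem~\ref{cotaAX} applies verbatim with $g=3$, yielding $\alpha(3,c,c+u)\leq \tfrac{5}{4}+\tfrac14\lceil (c-2)/u\rceil$, while the lower bound $1$ comes from Theorem~\ref{7j05} since any such graph contains a cycle of length $c\geq 4$.

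The substantive cases are the last two, where the bound becomes a \emph{constant}. For $m\geq\binom{c}{2}$, the claim $\alpha(3,c,m)=1$ is the key computation. The lower bound $\delta\geq 1$ again follows from Theorem~\ref{7j05} (there is a cycle of length $c\geq 4$). For the upper bound I would exhibit a graph in $\mathcal{H}(3,c,m)$ of hyperbolicity $1$: start from the complete graph $K_c$, which has girth $3$, circumference $c$, exactly $\binom{c}{2}$ edges, and $\diam(K_c)=1$ so $\delta(K_c)=1$ by Theorem~\ref{main}; then attach extra edges or pendant structure to reach $m$ edges without decreasing the girth or increasing the effective diameter beyond $2$. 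Here Lemma~\ref{Z} and Theorem~\ref{main} are the right tools to control $\delta$. For the intermediate band $c+\lfloor(c-2)/2\rfloor\leq m<\binom{c}{2}$, the target is $\alpha(3,c,m)\leq 3/2$. The natural approach is to construct a graph whose biconnected core has effective diameter at most $3$, so that Lemma~\ref{corollaryB} gives $\delta\leq 3/2$; I would take a graph like $\Gamma_c$ (or a dense subgraph of $K_c$ preserving a length-$c$ cycle and a triangle) where every pair of points lies in a short cycle, then use Lemma~\ref{Ye} to extend to all larger $m$ in the stated range.

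The main obstacle will be the middle case $c+\lfloor(c-2)/2\rfloor\leq m<\binom{c}{2}$: unlike the clean endpoints $m=\binom{c}{2}$ ($K_c$) and the small-$u$ regime, here one must produce, for \emph{every} edge count in a wide interval, a concrete graph with girth exactly $3$, circumference exactly $c$, the prescribed number of edges, and effective diameter at most $3$. The delicate point is simultaneously pinning the girth at $3$ and the circumference at $c$ while keeping the diameter small as the edge density varies; monotonicity in $m$ via Lemma~\ref{Ye} reduces the problem to realizing the \emph{smallest} admissible $m=c+\lfloor(c-2)/2\rfloor$ with $\delta\leq 3/2$, after which denser graphs only need their diameter bounded, so the real work is the careful verification that this threshold construction has effective diameter $\le 3$ (equivalently, that every vertex is within distance $1$ of every edge of its biconnected block, invoking Lemma~\ref{Z} at the level of each block). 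I expect the argument to parallel the $\Gamma_c$/$\Lambda_c$ constructions of Theorem~\ref{calculo2}, adapted so that triangles are allowed.
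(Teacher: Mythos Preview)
Your treatment of the first three items and of the case $m\ge\binom{c}{2}$ is correct and matches the paper's proof (the paper likewise invokes Theorem~\ref{cotaAX} for the first three items, Theorem~\ref{7j05} for the lower bound $1$, and the complete graph $K_c$ together with Lemma~\ref{Ye} for the last item).

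The genuine gap is in the fourth item, at the threshold $m=c+\lfloor(c-2)/2\rfloor$. Your plan is to exhibit a graph in $\mathcal{H}(3,c,m)$ whose biconnected core has (metric) diameter at most $3$, and then use $\delta\le\tfrac12\diameff$. But this is impossible for large $c$: the biconnected component containing a cycle of length $c$ has at least $c$ vertices, and with only about $3c/2$ edges its average degree is roughly $3$; a Moore--type count shows that a graph of maximum degree $d$ and diameter $3$ has at most $1+d+d(d-1)+d(d-1)^2$ vertices, so no graph with $O(c)$ edges on $c$ vertices can have diameter bounded independently of $c$. The $\Gamma_c$ and $\Lambda_c$ constructions you invoke from Theorem~\ref{calculo2} do have small diameter, but they have on the order of $c^2$ edges, far above the threshold $c+\lfloor(c-2)/2\rfloor$. (Incidentally, Lemma~\ref{Z} characterizes $\diam\le 2$, not $\diam\le 3$, so the ``equivalently'' in your last paragraph is not quite right either.)

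The paper avoids this obstruction entirely: it does \emph{not} bound the diameter. Instead it builds, for $m=c+\lfloor(c-2)/2\rfloor$, a graph $G_{A,B,B'}$ that is a long chain of triangles and $4$--cycles sharing single edges (with $k=\lfloor(c-2)/2\rfloor$, all $\alpha_j=1$, $\beta_0$ and $\beta_k$ equal to $2$ or $3$, and all other $\beta_j=\beta'_j=1$). This graph has large diameter, but Corollary~\ref{CX} bounds $\delta(G_{A,B,B'})$ purely in terms of the \emph{local} cycle lengths $L(C_j)\le 4$ and $\max\{\beta_j,\beta'_j\}\le 1$, giving $\delta\le\max\{5/4,3/2\}=3/2$. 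Monotonicity via Lemma~\ref{Ye} then finishes the range. The moral is that for sparse graphs the bound on $\delta$ must come from the structural estimate of Proposition~\ref{general}/Corollary~\ref{CX}, not from a global diameter bound.
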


\begin{proof}
Theorem  \ref{cotaAX} gives the first three items.

If $c\geq 4$, Theorem  \ref{7j05} gives $\alpha (3,c,m)\geq 1$.

Assume that $m=c+ \lfloor  \frac{c-2}{2}  \rfloor .$

If $c$ is even, then $ m= c+(c-2)/2  $. Consider a graph $G_{A,B,B'}$ as in Definition \ref{defn1}, with $k=(c-2)/2$, $\alpha_{j}=1$ for $ 1\leq j\leq (c-2)/2$,  $\beta_{0}=\beta_{(c-2)/2}=2$ and $\beta_{j}=\beta'_{j}=1$ for $ 1\leq j\leq (c-2)/2-1$. Corollary  \ref{CX}  gives $\delta(G_{A,B,B'})\leq    \max{ \lbrace 5/4, 3/2   \rbrace}= 3/2 .$ Note that $G_{A,B,B'} \in \mathcal{H}(3,c,c+(c-2)/2) $. If $m\geq c+(c-2)/2 $, then Lemma  \ref{Ye} implies  $\alpha(3,c,m) \leq \alpha(3,c,c+(c-2)/2) \leq \delta(G_{A,B,B'})\leq   3/2    $.

If $c$ is odd, then $m=c+(c-3)/2$. Consider a graph $G_{A,B,B'}$ as in Definition \ref{defn1}, with $k=(c-3)/2$, $\alpha_{j}=1$ for $ 1\leq j\leq (c-3)/2$,  $\beta_{j}=\beta_{j}'=1$ for $1\leq j\leq (c-3)/2-1$, $\beta_{0}=3 $ and $ \beta_{(c-3)/2}=2$. Corollary  \ref{CX}  gives $\delta(G_{A,B,B'})\leq    \max{ \lbrace  5/4, 3/2  \rbrace} = 3/2$. Note that $G_{A,B,B'} \in \mathcal{H}(3,c,c+(c-3)/2) $. If $m\geq c+(c-3)/2$, then Lemma  \ref{Ye} implies  $\alpha(3,c,m) \leq \alpha(3,c,c+(c-3)/2) \leq \delta(G_{A,B,B'}) \leq    3/2    $.
\smallskip

Assume now that  $m = \binom  {c} {2}$.  Let $G$ be the complte graph with $c$ vertices. Thus,   $G\in \mathcal{H}(3,c,\binom  {c} {2}) $, $\delta(G)=1$ and $\alpha(3,c,\binom  {c} {2})= 1$. Finally,  Lemma  \ref{Y} gives the desired result.
\end{proof}

\section{Computation of $\beta(g,c,m)$}

The arguments in the proofs of Lemmas \ref{t1} and \ref{t2}, but using Lemmas \ref{0002} and \ref{lem2e} instead of Lemmas \ref{000} and \ref{lem2},
respectively, give the following result.

\begin{thm} \label{12345e}
For any e-admissible triplet $(g,c,m)$, the value of $\beta(g,c,m)$ is as follows.

(1) If we have either $g=c$ or $g<c$ and $m\geq c+g/4$, then $\beta(g,c,m)=c/4$.

(2) If $g<c$ and $m< c+g/4$, then $\beta(g,c,m)=m-(g+3c)/4$.
\end{thm}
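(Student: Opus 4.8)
The plan is to mirror the proofs of Lemmas \ref{t1} and \ref{t2} verbatim, making only the substitutions dictated by the shift from the vertex-count parametrization to the edge-count parametrization. The crucial observation is that the entire edge-based machinery has been pre-assembled in the excerpt: Lemma \ref{0002} supplies the integers $a_1=m-c$, $a_2=g+c-m$, $a_3=m-g$ together with the algebraic identities $a_1+a_2=g$, $a_2+a_3=c$, $a_1+a_2+a_3=m$, and the four equivalences $a_1\le a_2\Leftrightarrow m\le c+g/2$, $a_2\le a_3\Leftrightarrow m\ge g+c/2$, $a_2\le 3a_1\Leftrightarrow m\ge c+g/4$; Lemma \ref{lem2e} characterizes e-admissibility by $g=c$ or ($g<c$ and $m\ge g+c/2$); and Lemma \ref{Ye} gives the monotonicity $\beta(g,c,m)\le\beta(g,c,m')$ for $m'\ge m$. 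These are the exact edge-analogues of Lemma \ref{000}, Lemma \ref{lem2}, and Lemma \ref{Y}. So the strategy reduces to checking that each numerical expression in Lemmas \ref{t1}–\ref{t2} transforms correctly: $a_3+3a_1$ becomes $(m-g)+3(m-c)=4m-g-3c$, giving $\delta=(4m-g-3c)/4=m-(g+3c)/4$, which is precisely the claimed value in part (2).

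First I would dispatch part (1). If $g=c$, Corollary \ref{newnew} forces $\beta(g,c,m)=c/4$ immediately. For $g<c$ with $m\ge c+g/4$, I would split on whether $m\le c+g/2$. In that regime, $\max\{g+c/2, c+g/4\}\le m\le c+g/2$, so the integers $a_1,a_2,a_3$ from Lemma \ref{0002} satisfy $a_1\le a_2\le a_3$ and $a_2\le 3a_1$, whence $C_{a_1,a_2,a_3}\in\mathcal H(g,c,m)$ and Corollary \ref{c:2RSSV} yields $\delta(C_{a_1,a_2,a_3})=(a_3+\min\{a_2,3a_1\})/4=(a_3+a_2)/4=c/4$; Corollary \ref{newnew} then pins $\beta=c/4$. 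For $m>c+g/2$, I would invoke Lemma \ref{Ye} plus the observation (exactly as in Lemma \ref{t1}) that $\lceil g/4\rceil\le g/2$ guarantees an intermediate admissible $m_0$ with $\max\{g+c/2,c+g/4\}\le m_0\le c+g/2$, reducing to the previous case.

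Next I would handle part (2), the harder direction, where $g<c$ and $m<c+g/4$. The lower bound $\beta(g,c,m)\ge m-(g+3c)/4$ comes from exhibiting $C_{a_1,a_2,a_3}$: by Lemma \ref{0002}, $m<c+g/4$ is equivalent to $3a_1<a_2$, so Corollary \ref{c:2RSSV} gives $\delta=(a_3+3a_1)/4=m-(g+3c)/4$. The upper bound is the genuine obstacle, and I expect it to require the full case analysis from Lemma \ref{t2}. For an arbitrary $G\in\mathcal H(g,c,m)$ and a geodesic triangle $T$ with $g\le L(T)\le c$, one splits on $L(T)=c$ versus $L(T)<c$. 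In the first case I would extract a subgraph $G_0=C_{a_1,a_2,a_3}$ by adjoining one component of a girth-cycle $C_g$ to $T$, apply Corollary \ref{X} to bound $\delta(G_0)\le(a_3+3a_1)/4$, and monotone the parameters to reach $m-(g+3c)/4$. In the second case I would reproduce the combinatorial inequality chain of Lemma \ref{t2}: the key step is that $m<c+g/4$ together with $c\le m$ forces $g>4$, which drives the estimate $L(T)<4m-g-3c$ and hence $\delta(T)<m-(g+3c)/4$.

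The main obstacle is verifying that the length-based inequality manipulation in the $L(T)<c$ subcase of Lemma \ref{t2} survives the reparametrization. In the vertex version the derivation hinged on $n+1$ playing the role of the total edge count and on the identity relating $L(T)$ to $n$; here the natural bookkeeping is cleaner since $m$ itself is the edge count, and the chain $g>4$, $k\ge1\Rightarrow g-4\le k(g-4)$, through the telescoping of the $L_j$ and $L'_j\ge g-L_j$ terms, should close to $L(T)<4m-g-3c$ without the $+1$ offsets that appeared in the vertex case. I would check this substitution carefully, since that is the only place where the edge/vertex distinction is not merely cosmetic; everything else is a direct transcription justified by the header remark that the proofs of Lemmas \ref{t1} and \ref{t2} apply with Lemmas \ref{0002} and \ref{lem2e} in place of Lemmas \ref{000} and \ref{lem2}.
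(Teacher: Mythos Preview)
Your proposal is correct and follows exactly the approach the paper takes: the paper's proof of Theorem \ref{12345e} is literally the one-line remark that the arguments of Lemmas \ref{t1} and \ref{t2} go through verbatim once Lemmas \ref{0002} and \ref{lem2e} replace Lemmas \ref{000} and \ref{lem2}. Your detailed transcription of the substitutions, including the observation that the $L(T)<c$ inequality chain closes to $L(T)<4m-g-3c$ without the $+1$ offsets, is precisely what that remark is asking the reader to verify.
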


\end{document}